\colorlet{darkblue}{blue!50!black}
\colorlet{darkblue}{red!100!black}
\newtheorem{theorem}{Theorem}[section]
\newtheorem{lemma}[theorem]{Lemma}
\newtheorem{proposition}[theorem]{Proposition}
\newtheorem{corollary}[theorem]{Corollary}
\newtheorem{definition}[theorem]{Definition}
\newtheorem{remark}[theorem]{Remark}
\newtheorem{hypothesis}[theorem]{Hypothesis}
\let\originalleft\left
\let\originalright\right
\renewcommand{\left}{\mathopen{}\mathclose\bgroup\originalleft}
\renewcommand{\right}{\aftergroup\egroup\originalright}
\theoremstyle{definition}
\newtheorem{condition}{Condition}[section]
\def\1{\mathbb{T}^2}
\def\T{\mathbb{T}}
\def\wi{\widehat}
\def\vi{\widetilde}
\def\d{\mathrm{d}}
\def\I{\mathrm{I}}
\def\B{\mathrm{B}}
\def\D{\mathrm{D}}
\def\A{\mathrm{A}}
\def\W{\mathrm{W}}
\def\R{\mathbb{R}}
\def\E{\mathbb{E}}
\def\Q{\mathrm{Q}}
\def\H{\mathbb{H}}
\def\V{\mathbb{V}}
\def\e{\epsilon}
\def\2{\mathcal{E}}
\def\L{\mathrm{L}}
\def\u{\boldsymbol{u}}
\def\v{\boldsymbol{v}}
\def\C{\mathrm{C}}
\def\F{\mathrm{F}}
\def\P{\mathbb{P}}
\def\N{\mathbb{N}}
\def\y{y}
\def\G{\mathrm{G}}
\def\Z{\mathbb{Z}}
\def\PP{\mathscr{P}}
\def\U{\mathbb{U}}
\def\x{x}
\def\z{z}
\def\b{\boldsymbol{b}}
\def\S{\mathrm{S}}
\def\M{\mathrm{M}}
\def\EE{\mathcal{E}}
\def\J{\mathrm{J}}
\def\LL{\mathbb{L}}
\def\k{\boldsymbol{k}}
\def\q{\boldsymbol{q}}
\def\b{\mathrm{b}}
\newcommand{\Addresses}{{
		\footnote{
			\noindent \textsuperscript{1,2}Department of Mathematics, Indian Institute of Technology Roorkee-IIT Roorkee,
			Haridwar Highway, Roorkee, Uttarakhand 247667, INDIA.\par\nopagebreak
			\noindent  \textit{e-mail:} \texttt{Manil T. Mohan: maniltmohan@ma.iitr.ac.in, maniltmohan@gmail.com.}
			
			\textit{e-mail:} \texttt{Ankit Kumar: akumar14@mt.iitr.ac.in.}
			
			\noindent \textsuperscript{*}Corresponding author.
			
			\textit{Keywords:} Stochastic Navier-Stokes equation, vorticity, uniform large deviation principle, Wiener process.
			
			Mathematics Subject Classification (2020): Primary 60H15, 60F10; Secondary  35Q30, 37L55, 60H20.

}}}
\begin{document}	
	
	\title[ULDP for 2D-SNSE]{Uniform large deviation principle for the solutions of two-dimensional stochastic Navier-Stokes equations in vorticity form
		\Addresses}

	\author[A. Kumar and M. T. Mohan]
	{Ankit Kumar\textsuperscript{1} and Manil T. Mohan\textsuperscript{2*}}

	\maketitle

\begin{abstract}
	The main objective of this paper is to demonstrate the uniform large deviation principle (UDLP) for the solutions of two-dimensional stochastic Navier-Stokes equations (SNSE) in the vorticity form when perturbed by two distinct types of noises. We first consider an infinite-dimensional additive noise that is white in time and colored in space and then consider a finite-dimensional Wiener process with linear growth coefficient. In order to obtain the ULDP for 2D SNSE in the vorticity form, where the noise is white in time and colored in space, we utilize the existence and uniqueness  result from \emph{B. Ferrario et. al., Stochastic Process. Appl., {\bf 129} (2019), 1568--1604,} and the \textsl{uniform contraction principle}. For the  finite-dimensional multiplicative Wiener noise, we first prove the existence of a unique local mild solution to the vorticity equation using a truncation and fixed point arguments. We then establish  the global existence of the truncated system by deriving a uniform energy estimate for the local mild solution.  By applying stopping time arguments and a version of Skorokhod's representation theorem, we conclude the global existence and uniqueness of a solution to our model. We employ the weak convergence approach to establish the ULDP for the law of the solutions in two distinct topologies. We prove ULDP in the $\mathrm{C}([0,T];\mathrm{L}^p(\mathbb{T}^2))$ topology, for $p>2$, taking into account the uniformity of the initial conditions contained in bounded subsets of $\mathrm{L}^p(\mathbb{T}^2)$. Finally, in $\mathrm{C}([0,T]\times\mathbb{T}^2)$ topology, the uniformity of initial conditions lying in bounded subsets of $\mathrm{C}(\mathbb{T}^2)$ is considered.
\end{abstract}

	\section{Introduction}\label{Sec1}\setcounter{equation}{0}
We consider the following two-dimensional stochastic  Navier-Stokes equations (SNSE):
\begin{equation}\label{1.1}
	\left\{
	\begin{aligned}
		\frac{\partial \u^\e}{\partial t}(t,\x)+(\u^\e(t,\x)\cdot\nabla)\u^\e(t,\x)-&\nu\Delta\u^\e(t,\x)+\nabla p^\e(t,\x)\\&=\sqrt{\e}\boldsymbol{\eta}, \ (t,\x)\in[0,T]\times{\1},\\
		\nabla\cdot \u^\e(t,\x)&=0, \ (t,\x)\in[0,T]\times{\1},\\
		\u^\e(0,\x)&=\u_0(\x), \ \x\in{\1},
	\end{aligned}
	\right.
\end{equation}which describe the motion of a viscous incompressible fluid in a domain ${\1}\subset\R^2$. Here, $\u^\e(t,\x)=(u_1^\e,u_2^\e)(t,\x)\in\R^2$ represents the velocity field at time $t$ and position $\x$, $p^\e(t,\x)\in\R$ denotes the pressure field, $\nu>0$ is the viscosity, $\u_0$ us the initial velocity and $\boldsymbol{\eta}$ is the stochastic forcing term. We consider our domain to be a torus, with ${\1}=[0,2\pi]^2$ and periodic boundary conditions in place.
\subsection{Vorticity formulation} In this article, we discuss the vorticity formulation of the model in \eqref{1.1} with two distinct types of noises. 
\subsubsection{Additive infinite-dimensional noise} 
Taking \textsl{curl} on both sides of the first equation of the system \eqref{1.1}, one  obtains the vorticity formulation, where the unknown is the vorticity $$	\xi^\e(t,\x)=\nabla^{\perp}\cdot\u^\e(t,\x)\equiv \partial_{x_1}u_2^{\e}(t,x)-\partial_{x_2}u_1^{\e}(t,x), \ (t,\x)\in[0,T]\times {\1}.$$ We know that the \textsl{curl} of a planar vector field has only one significant component $\xi^\e$, which is orthogonal to the plane. Therefore, with enough regularity of solutions,  the system \eqref{1.1} is equivalent to the following system: 
\begin{equation}\label{1.2}
	\left\{
	\begin{aligned}
		\frac{\partial \xi^\e}{\partial t}(t,\x)+\u^\e(t,\x)\cdot\nabla\xi^\e(t,\x)-&\nu\Delta\xi^\e(t,\x)\\&=\sqrt{\e}\W(\d t,\d \x), \ (t,\x)\in[0,T]\times{\1},\\
		\nabla\cdot \u^\e(t,\x)&=0, \ (t,\x)\in[0,T]\times{\1},\\
		\xi^\e(0,\x)&=\xi_0(\x), \ \x\in{\1},
	\end{aligned}
	\right.
\end{equation}with the periodic boundary conditions. For the sake of convenience, we are assuming $\nu=1$. Here, $\W(\cdot,\cdot)$ is the formal notation for some Gaussian perturbation defined on a filtered probability space $(\Omega,\mathscr{F},\{\mathscr{F}_t\}_{t\geq0},\P)$. The \textsl{Biot-Savart law} provides a relationship  between the velocity field $\u^\e$ and the vorticity $\xi^\e$, expressed as $\u^\e=\k*\xi^\e$, wherein $\k$ stands for the Biot-Savart kernel (see Subsection \ref{BSL}).

We interpret the system \eqref{1.2} in the sense of Walsh (see \cite{JBW}). Let $G(\cdot,\cdot,\cdot)$ be the fundamental solution to the heat equation on the flat torus ${\1}$. Then, the random field $\xi^\e=\{\xi(t,\x):(t,\x)\in[0,T]\times{\1}\}$ is a solution to the system \eqref{1.2} if it satisfies the following integral equation
\begin{align}\label{1.3}\nonumber
	\xi^\e(t,\x)&=\int_{\1} G(t,\x,\y)\xi_0(\y)\d\y+\int_0^t\int_{\1}\nabla_{\y} G(t-s,\x,\y) \cdot \u^\e(s,\y)\xi^\e(s,\y)\d\y\d s\\&\quad +\sqrt{\e}\int_0^t\int_{\1} G(t-s,\x,\y)\W(\d s,\d \y),\ \ \P\text{-a.s.},
\end{align}
for all $(t,\x)\in[0,T]\times{\1}$.

\subsubsection{Multiplicative finite-dimensional noise} Let us now assume that the stochastic perturbation be a finite-dimensional Wiener process with linear growth coefficient (see Hypothesis \ref{hyp1} below). 	With the above change in the noise term, one can rewrite the system \eqref{1.2} as follows:
\begin{equation}\label{FD1}
	\left\{
	\begin{aligned}
		\frac{\partial \xi^\e}{\partial t}(t,\x)&+\u^\e(t,\x)\cdot\nabla\xi^\e(t,\x)-\Delta\xi^\e(t,\x)\\&=\sqrt{\e}\sum_{j=1}^{n}\sigma_j(t,\x,\xi^\e(t,\x))\frac{\d}{\d t}\W^j(t), \ (t,\x)\in[0,T]\times{\1},\\
		\nabla\cdot \u^\e(t,\x)&=0, \ (t,\x)\in[0,T]\times{\1},\\
		\xi^\e(0,\x)&=\xi_0(\x), \ \x\in{\1},
	\end{aligned}
	\right.
\end{equation}where $\W(\cdot)$ is an $n$-dimensional Wiener process. 
We have a similar formulation to \eqref{1.3} for the solution to the system \eqref{FD1} in sense of Walsh (see \cite{JBW}) as
\begin{align}\label{FD2}\nonumber
	\xi^\e(t,\x)&=\int_{\1} G(t,\x,\y)\xi_0(\y)\d\y+\int_0^t\int_{\1}\nabla_{\y} G(t-s,\x,\y) \cdot \u^\e(s,\y)\xi^\e(s,\y)\d\y\d s\\&\quad +\sqrt{\e}\sum_{j=1}^{n}\int_0^t\int_{\1} G(t-s,\x,\y)\sigma_j(s,\y,\xi^\e(s,\y))\d \y\d\W^j(s), \ \ \P\text{-a.s.},
\end{align}
for all $(t,\x)\in[0,T]\times{\1}$.
\subsection{Literature review}
The analysis of stochastic partial differential equations (SPDEs) deals with the existence and uniqueness results, and various properties of solution processes such as the large deviation principle (LDP), uniform large deviation principle (ULDP), invariant measures, random attractors, and much more. In this work, we are able to formulate our system as a two-dimensional stochastic parabolic nonlinear equation with a distinct nonlinear term compared to most papers related to higher-dimensional spatial domains concerning stochastic parabolic nonlinear equations (cf. \cite{BFMZ,JLMSSS}, etc. and the references therein). We assume our problems in the vorticity form \eqref{1.2} and \eqref{FD1} by taking the \textsl{curl} of the system \eqref{1.1}, similarly to the approach used by \cite{BFMZ}, but with different types of noises. The main focus of this work is to analyze the asymptotic behavior of the solutions to the system \eqref{1.1} in the vorticity form with different types of stochastic forcing terms (as seen in \eqref{1.2} and \eqref{FD1}).

The intriguing research topic of large deviation theory in probability theory is examined to analyze the limiting behavior of rare event probabilities. The outstanding works by Donsker and Vardhan \cite{DV85,V66,V84}, as well as Freidlin and Wentzell \cite{MIFADW}, brought the large deviation theory to the forefront of probability theory research. Numerous authors, including \cite{HBAM,ZBSC,ZBBGTJ,ZBXPJZ,SCAD1,SCAD2,ICAM1,ZDRZ,WHSL,AKMTM7,AKMTM5,WL1,MRTZXZ,TXTZ,JZTZ}, have contributed to the field by establishing LDP for several types of SPDEs. Our goal in this article is to demonstrate the Freidlin-Wentzell Uniform Large Deviation Principle (FWULDP), also known as the ULDP from Freidlin and Wentzell. The \textsl{Contraction Principle}, as stated in Theorem 4.2.1 of \cite{ADOZ}, provides a way to address the LDP in the setting of additive Gaussian noise. This principle, which states that the LDP is preserved under continuous mappings, has been employed by the authors of \cite{ADOZ, MTMSBH, MTMTD}, and other papers, to obtain LDP for various types of SPDEs. Recently, the idea of \textsl{Contraction Principle} has been extended by the author in \cite{BW} to \textsl{Uniform Contraction Principle} in order to prove ULDP, considering uniformity across initial conditions that lie within a bounded subsets of an appropriate space.

We cannot apply the approach developed by \cite{ADPDVM} directly to establish the  ULDP for SPDEs that have non-compact sets of initial conditions, as this approach requires the convergence of initial conditions. For several applications, such as the study of exit time and place of a stochastic process $\eta$ from its domain, it is not necessarily required for the large deviations of $\eta$ to be uniform over bounded, non-compact subsets of the space (for more details, we refer to \cite{MS}).  Several authors have worked towards establishing ULDP for various types of SPDEs, with notable contributions from \cite{FCAM,AKMTM7,MS,MSLS,BW} and their cited references. The authors of \cite{MS} obtained a ULDP for a general class of SPDEs, but it was limited to the application on SPDEs defined in bounded domains with compact Sobolev embeddings. An LDP for a general class of   Banach space valued SPDEs that is uniform	  with respect to initial conditions in bounded subsets of the Banach space is established in \cite{MSABPD19}. Recently, the author in  \cite{BW} established the uniform contraction principle to obtain the ULDP for solutions of fractional stochastic reaction-diffusion equations, which is uniformly valid for initial conditions in bounded subsets of the $\L^2(\R^d)$-norm.

Under Hypothesis \ref{hyp1} (see below), the second main result of this article is the existence and uniqueness of a solution to the system \eqref{FD1} driven by a finite dimensional Wiener process with a linear growth coefficient. We adopted the idea from \cite{IG,IGDN,IGCR2,IGCR} for different types of SPDEs. The heat kernel estimates given by Lemma \ref{thrm2.1} and Lemma \ref{lem25} play a crucial role in our proof. We choose the method of localization to handle the non-Lipschitz nonlinear term in \eqref{1.2}, as suggested in \cite{IG,IGDN,IGCR,IGCR2,AKMTM7,AKMTM3}. First we show  that the truncated integral equation \eqref{FD7} has a unique global solution, which implies the local existence and uniqueness of the solution upto some stopping time $\tau$ to the integral equation \eqref{FD2}. We establish the global solvability results by showing that  $\tau=T$, $\P$-a.s. Furthermore, we assume our initial data $\xi_0\in\C({\1})$ and prove that the solution $\xi$ admits a modification which is a space-time continuous process, 

The final goal of this article is to establish ULDP for the solution of the integral equation \eqref{FD2}. Different definitions of ULDP can be shown to be equivalent with appropriate assumptions, as demonstrated in \cite{MS}. The authors in \cite{ABPFD} used a variational representation to establish the  FWULDP for a diverse family of infinite-dimensional stochastic dynamical systems. They have established the weak convergence approach for proving the ULDP with the help of \textsl{uniform Laplace principle} (ULP). They established the ULDP for the law of solution to certain SPDEs by  proving a sufficient condition, and to verify it one needs to explore more basic properties such as well-posedness and tightness of skeleton integral equations (\eqref{6.5} and \eqref{6.6} below), which are similar to the original integral equation  \eqref{FD2}. 

In a recent work \cite{MS}, the author established the equivalency between four different kind of definitions of ULDP and ULP.  He formulated the \textsl{Equicontinuous Uniform Laplace Principle} (EULP) and demonstrated it is equivalent to the FWULDP without any additional or compactness assumptions. He provided a sufficient condition for the proof of EULP under which the law of the solution follows ULDP with uniformity considered over initial conditions belonging to a bounded, but not necessarily compact, set. In the work \cite{MSLS}, the authors established FWULDP for the law of the solution to the stochastic Burgers type equation, assuming a polynomial nonlinearity of any order driven by a finite-dimensional Wiener process. The authors in \cite{LS23} derived the ULDP for the law of solutions to a class of semilinear SPDEs with bounded noise coefficient, which are driven by space-time white noise. Recently, the authors in \cite{AKMTM7} used the approach of \cite{MS} to demonstrate the UDLP for a stochastic generalized Burgers-Huxley equation that is driven by either a $\Q$-Wiener process with a linear growth of noise coefficient or a space-time white noise with a bounded noise coefficient. In the works \cite{AKMTM7, MSLS, LS23}, it is assumed that the uniformity with respect to the initial conditions that are in a bounded subsets does not need to be compact.
\subsection{Objectives and novelties of the paper}  This work aims to analyze the asymptotic behavior of the two-dimensional Navier-Stokes equations in the vorticity form with a stochastic forcing term (to be specified later). To the best of our understanding, this analysis presented in this article has not been previously investigated in available literature. This paper is structured into two parts, which focus on the noise structure:

In the first part, we assume that the equation is driven by an infinite-dimensional additive noise, which is white in time and colored in space.
\begin{itemize}
	\item 
	The noise structure of this part resembles that of the work \cite{BFMZ}. Minor modifications to the well-posed result presented in \cite{BFMZ} can be used to obtain the global solvability of the system \eqref{1.2}. Our objective  is to demonstrate that ULDP holds in two distinct topologies: $\C([0,T];\L^p({\1}))$ with $p>2$, where the uniformity is measured over initial conditions belonging to bounded subsets of $\L^p({\1})$, and $\C([0,T]\times{\1})$ with uniformity over initial conditions belonging to bounded subsets of $\C({\1})$. The proof of the ULDP in both cases are based on the uniform contraction principle established in \cite{BW}.
\end{itemize}
We assume in the second part that the equation is driven by a multiplicative finite-dimensional  Wiener process with a linear growth of the noise coefficient.
\begin{itemize}
	\item We use the approach from \cite{IG,IGCR2,IGCR,AKMTM3}, etc. to ensure the well-posedness of the system \eqref{FD1}. We prove the existence of a local solution to the integral equation \eqref{FD2} by utilizing a truncation and fixed point argument. With the help of stopping time arguments, tightness properties and Skorokhod's representation theorem, we establish the uniform global energy estimates of the local solution and consequently, the global existence of the solution.
	\item Our final goal is to verify the ULDP for the system \eqref{FD1}. {Given the initial conditions in a bounded subsets of an appropriate space, it is not sufficient to establish ULP to obtain UDLP without any additional assumption as discussed in \cite{MS}.} The motivation for the ULDP comes from the work of \cite{MS}, which provides a sufficient condition for the verification of EULP, leading to ULDP in two distinct topologies: $\C([0,T];\L^p({\1}))$ with $p>2$, where the uniformity is measured over initial conditions belonging to bounded subsets of $\L^p({\1})$, and $\C([0,T]\times{\1})$ with uniformity over initial conditions belonging to bounded subsets of $\C({\1})$. We use the embedding $\C([0,T]\times{\1})\hookrightarrow \C([0,T];\L^p({\1}))$, in the proof of ULDP in $\C([0,T]\times{\1})$ topology.
\end{itemize}

\subsection{Organization of the paper}The following structure is used to organize this article: We start Section \ref{Sec2} by introducing basic function spaces, heat kernel estimates, Biot-Savart law, and noise structure that assist us in formulating our problem. Then, we analyze the global solvability of the system \eqref{1.2}. Section \ref{LDP} begins by introducing the basics of LDP and ULDP, followed by recalling the uniform contraction principle (Proposition \ref{thrmU2}) and proceeding to the proof of local Lipschitz continuity of solutions of \eqref{L5} (Lemma \ref{lem4.8}). Subsequently, we present and demonstrate our main results of this section (Theorems \ref{thrmUL} and \ref{thrmUL1}), which are the ULDP for the law of the solutions of the system \eqref{1.2}. 
We analyze the perturbation caused by a finite-dimensional Wiener process in Sections \ref{FD} and \ref{UDLP}, as shown in the equation \eqref{FD1}.  We obtain global solvability of the integral equation \eqref{FD2} under Hypothesis \ref{hyp1} via the use of the truncation method and uniform energy estimates (Lemmas \ref{lem5.2} and \ref{lem5.4}, Proposition \ref{prop5.3}, and Theorem \ref{thrmex}). The ULDP for the law of the solutions to the integral equation \eqref{FD2} are presented in the final Section \ref{UDLP}. We first recall some useful notations and results from \cite{MS} including a sufficient condition \ref{cond} that, according to \cite[Theorem 2.10]{MS}, ensures EULP and hence ULDP. We establish intermediate results (Theorems \ref{thrm6.5} and \ref{thrm6.6}, Proposition \ref{prop6.9}, and Corollary \ref{cor6.10}) later that are useful to obtain our main results (Theorems \ref{thrm6.7} and \ref{thrm6.8}) in this section. We use uniform convergence in probability (Corollary \ref{cor6.11}) to prove Theorem \ref{thrm6.7}. We conclude the article with the proof of Theorem \ref{thrm6.8}, relying on Lemma \ref{lem6.12} for tightness and Theorem \ref{thrm6.13} for convergence in probability in supremum norm.

\section{Mathematical formulation}\label{Sec2}\setcounter{equation}{0}
We will begin this section by considering the vorticity form of the system \eqref{1.1}, and then discuss the required function spaces needed, along with the heat kernel and its associated estimates, Biot-Savrat law, and a brief overview of the noise structure which will lead us to our model and its solvability results.

\subsection{Function spaces} We denote by  $\x=(x_1,x_2)$ a point in $\R^2$ and  the scalar product and the norm in $\R^2$ by
$\x\cdot\y=x_1y_1+x_2y_2, \text{ and } \ |\x|=\sqrt{\x\cdot\x}, \  \x,\y\in\R^2,
$ respectively. Given $\z=\mathrm{Re}(\z)+i\mathrm{Im}(z)\in\mathbb{C}$, we denote the absolute value and complex conjugate of $\z$ by $|\z|$ and $\overline{\z}$, respectively, and 
$
|\z|=\sqrt{(\mathrm{Re}(\z))^2+(\mathrm{Im}(\z))^2}, \text{ and } \overline{\z}=\mathrm{Re}(\z)-i\mathrm{Im}(\z).
$ We define $\Z_0^2=\Z^2\backslash\{(0,0)\}$ and $\Z_+^2=\{\eta=(\eta_1,\eta_2)\in\Z^2:\eta_1>0\}\cup\{\eta=(0,\eta_2)\in\Z^2:\eta_2>0\}$. Let us denote the space of all complex-valued $2\pi$-periodic functions in $x_1$ and $x_2$ which are measurable and square integrable on ${\1}$ by $\L_\#^2({\1})$. The space $\L_\#^2({\1})$ is equipped with the scalar product and norm given by
\begin{align*}
	(f_1,f_2)_{\L^2({\1})}=\int_{\1}f_1(\x)\overline{f_2(x)}\d \x \  \text{ and } \  \|\cdot\|_{\L^2({\1})}^2=(\cdot,\cdot)_{\L^2({\1})},
\end{align*}respectively. We define the space $\mathbb{L}_{\#}^2({\1})=[\L_\#^2({\1})]^2$ consisting of all pairs $\u=(u_1,u_2)$ of complex-valued periodic functions equipped with the inner product 
\begin{align*}
	(\u,\v)_{\mathbb{L}_{\#}^2({\1})}:=\int_{\1} \u(\x)\cdot\overline{\v(\x)}\d \x= \int_{\1}\big[u_1(\x)\overline{v_1(\x)}+u_2(\x)\overline{v_2(\x)}\big]\d \x, \  \u,\v\in\mathbb{L}_{\#}^2({\1}).
\end{align*}An orthonormal basis for the space $\L_\#^2({\1})$ is given by $\{e_\eta\}_{\eta\in\Z^2}$, where 
\begin{align}\label{2.1}
	e_\eta(\x)=\frac{1}{2\pi} e^{i\eta\cdot\x}, \ \ \x\in{\1},  \text{ and } \ \eta\in\Z^2.
\end{align}
One can also consider non-zero mean value vectors in the same manner as discussed in \cite{RT}, even though mean value zero vectors are more commonly used and mathematically simpler to consider in periodic cases. We denote the space with  the zero-mean condition on the space $\L_\#^2({\1})$ by $\mathring{\L}_\#^2({\1})$.  An orthonormal system of eigenfunctions $\{e_\eta\}$ defined in \eqref{2.1}, with associated eigenvalues $\lambda_\eta=|\eta|^2$, is provided for the space $\mathring{\L}_\#^2({\1})$ by the operator $-\Delta$.

The real-valued functions in $\mathring{\L}_\#^2({\1})$ can be characterized by their Fourier series expansion as follows:
\begin{align*}
	\mathring{\L}_\#^2({\1})=\bigg\{g(\x)=\sum_{\eta\in\Z_0^2}g_\eta e_\eta(\x):\overline{g_\eta}=g_{-\eta}, 
	\ \text{ for any }\  \eta, \sum_{\eta\in\Z_0^2}|g_{\eta}|^2<\infty\bigg\}.
\end{align*}For every $p>2$, we denote the subspace of $\L^p({\1})$ consisting of zero mean and periodic scalar functions by $\mathring{\L}_\#^p({\1})$. Moreover, the spaces $\mathring{\L}_\#^p({\1})$ are Banach spaces with the norms inherited from the spaces $\L^p({\1})$.

The Laplacian operator $-\Delta$ with periodic boundary conditions is represented by $\A$. For every $a\in\R$, we define the powers of the operator $\A$ for $ g=\sum\limits_{\eta\in\Z_0^2}g_\eta e_\eta$ as
\begin{align*}
	\A^ag =\sum_{\eta\in\Z_0^2}|\eta|^{2a}g_{\eta}e_\eta,
\end{align*}and the domain of $\A^a$ is denoted by $\D(\A^a):=\bigg\{g=\sum\limits_{\eta\in\Z_0^2}g_\eta e_\eta: \sum\limits_{\eta\in\Z_0^2}|\eta|^{4a}|g_\eta|^2<\infty\bigg\}$. 

For any $a\in\R^+$ and $p\geq 1$, we define $\mathrm{H}^{a,p}=\{g\in \mathring{\L}_\#^p({\1}):\A^{\frac{a}{2}}g\in \mathring{\L}_\#^p({\1})\} $, as Banach spaces with the usual norm. For $p=2$, $\mathrm{H}^{a,2}$ become Hilbert spaces and we denote them by $\mathrm{H}^a$. For $a<0$, we define $\mathrm{H}^a$ as the dual space of $\mathrm{H}^{-a}$ with respect to the $\L^2$-scalar product.

We define the space regularity of periodic vector fields, zero mean value, and divergence-free through Laplace operator's corresponding action on each vector component. Therefore, we define the space $\H=\{\u\in \mathring{\mathbb{L}}_{\#}^2({\1}):\nabla\cdot\u=0\}$, here the divergence free condition has to be understood in the distributional sense. The scalar product from $[\L^2({\1})]^2$ allows $\H$ to be considered as a Hilbert space. The norm on the space $\H$ is given by $\|\cdot\|_\H$, $\|\u\|_{\H}=\sqrt{(\u,\u)_\H}$. Let $\left\{\frac{\eta^{\perp}}{|\eta|}e_\eta\right\}_{\eta\in\Z_0^2}$ be the basis of the space $\H$, where $\eta^{\perp}=(-\eta_2,\eta_1)$ and $e_\eta$ is defined in \eqref{2.1}. For $p>2$, set $\LL_{\sigma}^p(\1):= \mathbb{H}\cap\mathbb{L}_{\#}^p(\1)$, which forms Banach spaces with the norms inherited from the space $\mathbb{L}_{\#}^2(\1)$. Similarly,  we set the vector space,  $\H^{a,p}=\{\u\in\LL_{\sigma}^p({\1}):\A^{\frac{a}{2}}\u\in\LL_{\sigma}^p({\1})\}$. These are Banach spaces with the usual norm, and for $p=2$ they become Hilbert spaces and we denote them by $\H^a$. For $a<0$, we define $\H^a$ as the dual space of $\H^{-a}$ with respect to the $\H$-scalar product. Moreover, the Poincar\'e inequality holds. The zero mean value assumption provides that the norms $\|\u\|_{\H^{a,p}}=\|\A^{\frac{a}{2}}\u\|_{\mathbb{L}^p}$ and $\big(\|\u\|_{\LL_{\sigma}^p}^p+\|\u\|_{\H^{a,p}}^p\big)^\frac{1}{p}$ are equivalent. 

We use the following Rellich-Kondrachov compactness Theorem  in the sequel (see \cite[Theorem 9.16]{HB}):
\begin{enumerate}
	\item For every $p\in(2,\infty)$, the space $\H^{1,p}$ is compactly embedded in $\LL_{\sigma}^\infty({\1})$, that is, there exists a constant $C_p$ such that 
	\begin{align}\label{2.2}
		\|\u\|_{\LL_{\sigma}^\infty} \leq C_p\|\u\|_{\H^{1,p}}.
	\end{align}
	\item The space $\mathrm{H}^a$ is compactly embedded in $\L^\infty({\1})$ for $a>1$.
\end{enumerate}
Let $(\U,\|\cdot\|_{\U})$ and $(\V,\|\cdot\|_{\V})$ be any two normed vector spaces. The space of all linear bounded operators from $\U$ into $\V$ is denoted by $\mathcal{L}(\U,\V)$. We use the notation $(\cdot,\cdot)$ and $\langle \cdot,\cdot\rangle_{\U'\times\U}$ for scalar product and duality pairing between $\U'$ and $\U$, respectively.

\subsection{The heat kernel}
We interpreted system \eqref{1.2} in the sense of Walsh (see \cite{JBW}) as expressed in \eqref{1.3}, with the heat kernel $G$. We need to make accurate estimations of $G$ to complete our calculations, as it is a critical factor.

The operator $\A$ generates a semigroup $\S(t)=e^{-t\A}$, for $\xi\in \mathring{\L}_{\#}^2(\1)$ and $t\in[0,T]$, we have 
\begin{align}\label{2.3}
	[\S(t)\xi](\x)=\sum_{\eta\in\Z^2} e^{-t|\eta|^2}(\xi,e_\eta)_{\L^2}e_\eta(\x)=\frac{1}{2\pi}\sum_{\eta\in\Z^2}(\xi,e_\eta)_{\L^2}e^{-t|\eta|^2+i\eta\cdot\x}.
\end{align}Moreover, the action of the semigroup $\S(\cdot)$ on the vorticity $\xi$ can be represented by the convolution
\begin{align}\label{2.4}
	[\S(t)\xi](\x)=\int_{\1}G(t,\x,\y)\xi(\y)\d y,
\end{align}where $G$ is the heat kernel of the following problem
\begin{equation}\label{2.5}
	\left\{
	\begin{aligned}
		\frac{\partial v}{\partial t}(t,\x)-\Delta v(t,\x)&=0, \ \ (t,\x)\in(0,T]\times{\1},\\
		v(t,\cdot) &\text{ is periodic }, \ \ t\in[0,T],\\
		v(0,\x)&=\delta_0(\x-\y), \ \ \x,\y\in{\1},
	\end{aligned}
	\right.
\end{equation}
where $\delta_0(\cdot)$ is the Dirac delta distribution centered at $(0,0)$. Using Fourier series expansion, we obtain the following explicity form of heat kernel $G$
\begin{align}\label{2.6}
	G(t,\x,\y)=\frac{1}{(2\pi)^2}\sum_{\eta\in\Z^2}e^{-t|\eta|^2+i\eta\cdot(\x-\y)}.
\end{align}For our calculations, we need another form of heat kernel which can be derived using the method of images (see \cite[Chapter 2.7, Section 5 or Chapter 2.11, Section 3]{HDHPM} or \cite[Chapter 7, Section 2]{EMS}) 
\begin{align}\label{2.7}
	G(t,\x,\y)=\frac{1}{4\pi t}\sum_{\eta\in\Z^2}e^{-\frac{|\x-\y+2\eta\pi|^2}{4 t}}.
\end{align}From \eqref{2.6} or \eqref{2.7}, one can observe the following properties of the heat kernel $G$ for all $(t,\x,\y)\in [0,T]\times{\1}\times{\1}$:
\begin{enumerate}
	\item \textsl{Symmetry:} $G(t,\x,\y)=G(t,\y,\x)$, 
	\item $G(t,\x,\y)=G(t,0,\x-\y)$.
\end{enumerate}Let us recall a result from \cite{BFMZ} which gives useful estimates on the heat kernel and its gradient in the two-dimensional case.
\begin{lemma}[{\cite[Theorem 4]{BFMZ}}]\label{thrm2.1}
	For fixed $s\in(0,t)$ and $\x\in\T^2,$ the following estimates hold:
	\begin{enumerate}
		\item For $\beta\in(0,4/3)$, there exists a positive constant $C_\beta$ such that
		\begin{align}\label{2.8}
			\int_{\1}|\nabla_{\y}G(s,\x,\y)|^\beta\d \y&\leq C_\beta s^{1-\frac{3\beta}{2}},\\\label{2.9}
			\int_0^t	\int_{\1}|\nabla_{\y}G(s,\x,\y)|^\beta\d \y\d s&\leq C_\beta t^{2-\frac{3\beta}{2}}.
		\end{align}
		\item For every $\beta\in(0,2)$, there exists a positive constant $C_\beta$ such that
		\begin{align}\label{2.10}
			\int_{\1}|G(s,\x,\y)|^\beta\d \y&\leq C_\beta s^{1-\beta},\\\label{2.11}
			\int_0^t	\int_{\1}|G(s,\x,\y)|^\beta\d \y\d s&\leq C_\beta t^{2-\beta}.
		\end{align}
	\end{enumerate}
\end{lemma}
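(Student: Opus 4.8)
The plan is to base everything on the image representation \eqref{2.7} of the periodic heat kernel and reduce the torus estimates to Gaussian integrals on $\R^2$, isolating the short-time singularity near the diagonal. Writing $p(s,\z)=\frac{1}{4\pi s}e^{-|\z|^2/(4s)}$ for the free heat kernel on $\R^2$, the formula \eqref{2.7} reads $G(s,\x,\y)=\sum_{\eta\in\Z^2}p(s,\x-\y+2\pi\eta)$, and differentiation in $\y$ gives $\nabla_{\y}G(s,\x,\y)=\frac{1}{8\pi s^2}\sum_{\eta\in\Z^2}(\x-\y+2\pi\eta)\,e^{-|\x-\y+2\pi\eta|^2/(4s)}$. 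The first task is to convert these sums into clean Gaussian bounds expressed through the torus distance $\rho(\x,\y)=\min_{\eta\in\Z^2}|\x-\y+2\pi\eta|$; these bounds are what decouple the $\x,\y$–dependence from the power of $s$.

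For the short-time regime $s\in(0,1]$ I would isolate the nearest image, whose exponent is exactly $\rho(\x,\y)^2/(4s)$, and control the rest: relative to the minimizing index, every other term is shifted by $2\pi\mu$ with $\mu\neq0$, so $|\x-\y+2\pi\eta|\geq \pi(2-\sqrt2)|\mu|$ and hence $\sum_{\eta\neq\eta^*}e^{-|\x-\y+2\pi\eta|^2/(4s)}\leq C\sum_{\mu\neq0}e^{-c|\mu|^2/s}$, which for $s\le1$ is exponentially small and (since $\rho\le\sqrt2\pi$ is bounded) can be absorbed into a term of the same Gaussian form. This yields pointwise bounds $G(s,\x,\y)\le \frac{C}{s}\,e^{-\rho(\x,\y)^2/(Cs)}$ and, after using $|\z|\,e^{-|\z|^2/(4s)}\le 2\sqrt{s}\,e^{-|\z|^2/(8s)}$ to absorb the linear factor produced by the gradient, $|\nabla_{\y}G(s,\x,\y)|\le \frac{C}{s^{3/2}}\,e^{-\rho(\x,\y)^2/(Cs)}$. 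These pointwise bounds hold for every $\beta>0$ uniformly, so no separate treatment of $\beta\le1$ versus $\beta>1$ is needed, and the extra half power $s^{-1/2}$ in the gradient bound is precisely what later separates the exponents $1-\beta$ and $1-\tfrac{3\beta}{2}$.

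The spatial estimates \eqref{2.8} and \eqref{2.10} then follow by raising these bounds to the power $\beta$ and integrating, using a fundamental-domain change of variables and $\int_{\1}e^{-\beta\rho(\x,\y)^2/(Cs)}\,\d\y\le\int_{\R^2}e^{-\beta|\z|^2/(Cs)}\,\d\z=\frac{C\pi s}{\beta}$; this gives $\int_{\1}|G|^\beta\,\d\y\le C_\beta s^{-\beta}\cdot s=C_\beta s^{1-\beta}$ and $\int_{\1}|\nabla_{\y}G|^\beta\,\d\y\le C_\beta s^{-3\beta/2}\cdot s=C_\beta s^{1-3\beta/2}$. For the remaining bounded range $s\in(1,t]$ (the estimates are applied only for $s,t\in[0,T]$), $G$ and $\nabla_{\y}G$ are bounded, so the spatial integrals are bounded by constants; since $s^{1-\beta}$ and $s^{1-3\beta/2}$ are bounded below on $[1,T]$, these are absorbed into $C_\beta$, at the cost of letting $C_\beta$ depend on $T$.

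Finally, the space-time estimates \eqref{2.9} and \eqref{2.11} follow by integrating the spatial bounds in $s$, giving $\int_0^t s^{1-3\beta/2}\,\d s=\frac{t^{2-3\beta/2}}{2-3\beta/2}$ and $\int_0^t s^{1-\beta}\,\d s=\frac{t^{2-\beta}}{2-\beta}$. The integrability at $s=0$ is exactly where the constraints on $\beta$ enter: $1-\tfrac{3\beta}{2}>-1$ forces $\beta<\tfrac43$ for the gradient estimate, while $1-\beta>-1$ forces $\beta<2$ for $G$. I expect the main obstacle to be the uniform control of the image sum, that is, establishing the torus Gaussian bounds with constants independent of $\x,\y$ and uniform in $s$, together with the clean absorption of the linear factor in the gradient estimate; once these pointwise bounds are secured, the integrations are routine.
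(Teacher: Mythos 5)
The paper contains no proof of this lemma to compare against: it is imported verbatim as a citation to \cite{BFMZ} (Theorem 4 of Ferrario--Zanella), and the authors use it as a black box. Your proposal therefore supplies something the paper deliberately omits, and as far as I can check it is correct. The pointwise Gaussian bounds $G(s,\x,\y)\le C s^{-1}e^{-\rho(\x,\y)^2/(Cs)}$ and $|\nabla_{\y}G(s,\x,\y)|\le C s^{-3/2}e^{-\rho(\x,\y)^2/(Cs)}$ for $s\in(0,1]$ are established soundly: the separation bound $|\x-\y+2\pi(\eta^*+\mu)|\ge \pi(2-\sqrt2)|\mu|$ follows from the triangle inequality together with $\rho\le\sqrt2\,\pi$, the tail of the image sum is then $O(e^{-c/s})$ and can be absorbed into the leading Gaussian precisely because $\rho$ is bounded, and the elementary inequality $|\z|e^{-|\z|^2/(4s)}\le 2\sqrt{s}\,e^{-|\z|^2/(8s)}$ correctly accounts for the extra $s^{-1/2}$ that separates the exponents $1-\beta$ and $1-\tfrac{3\beta}{2}$. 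The fundamental-domain change of variables reducing the spatial integrals to $\int_{\R^2}e^{-\beta|\z|^2/(Cs)}\,\d\z=C\pi s/\beta$, and the observation that the restrictions $\beta<\tfrac43$ and $\beta<2$ enter only as integrability conditions at $s=0$ in \eqref{2.9} and \eqref{2.11}, are exactly right. Two points in your write-up deserve emphasis rather than apology. First, allowing $C_\beta$ to depend on $T$ (equivalently, treating $s\in(1,T]$ separately) is genuinely necessary, not a convenience: for $\beta\in(1,2)$ one has $\int_{\1}|G(s,\x,\y)|^\beta\d\y\to(2\pi)^{2-2\beta}>0$ as $s\to\infty$ while $s^{1-\beta}\to0$, so \eqref{2.10} cannot hold with a time-uniform constant; this is consistent with how the lemma is invoked in the paper, always on $[0,T]$. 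Second, your route through pointwise Gaussian domination handles all $\beta>0$ at once in the spatial estimates, whereas the naive alternative of integrating the image sum term by term only works for $\beta\le1$ (where subadditivity $(\sum_\eta a_\eta)^\beta\le\sum_\eta a_\eta^\beta$ is available) and would break down exactly in the range $\beta>1$ that the paper needs; identifying the nearest image and absorbing the rest is the right way around this.
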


The following result shows the regularizing effect of convolution with the gradient of the heat kernel $G$. Let $J$ be the linear operator defined as 
\begin{align}\label{R1}(J\boldsymbol{\phi})(t,\x)=\int_0^t\int_{\1}\nabla_{\y}G(t-s,\x,\y)\cdot\boldsymbol{\phi}(s,\y)\d \y\d s, \ \ \text{ for } \ \ (t,\x)\in[0,T]\times{\1},
\end{align} and $\boldsymbol{\phi}$ smooth enough. We have the following result from \cite{BFMZ}. 
\begin{lemma}[{\cite[Lemma 6]{BFMZ}}]\label{lem25}
	The following hold:
	\begin{enumerate}
		\item 
		Let $p\geq 1,\ \alpha\geq 1,\ \beta\in\left[1,\frac{4}{3}\right), \ \gamma>\frac{2\beta}{2-\beta}$ such that $\frac{1}{\beta}=1+\frac{1}{p}-\frac{1}{\alpha}$. Then, $J$ is a bounded linear operator from $\L^\gamma(0,T;\LL_{\sigma}^\alpha(\1))$ into $\L^\infty(0,T;\L^p(\1))$. Moreover, there exists a positive constant $C_\beta$ such that 
		\begin{align}\label{R2}\nonumber
			\|(J\boldsymbol{\phi})(t)\|_{\L^p}&\leq C_\beta\int_0^t(t-s)^{\frac{1}{\beta}-\frac{3}{2}}\|\boldsymbol{\phi}(s)\|_{\LL_{\sigma}^\alpha}\d s\\&\leq C_\beta t^{\frac{1}{\beta}-\frac{3}{2}+\frac{\gamma-1}{\gamma}}\bigg(\int_0^t\|\boldsymbol{\phi}(s)\|_{\LL_{\sigma}^\alpha}^\gamma\d s\bigg)^{\frac{1}{\gamma}},
		\end{align}for all $t\in[0,T]$.
		\item Let $p>4$ and $\gamma>\frac{2p}{p-2}$. Then, $J$ is a bounded linear operator from\\ $\L^\gamma(0,T;\LL_{\sigma}^p(\1))$ into $\C([0,T]\times{\1})$. Moreover, there exists a positive constant $C_{p,T}$ such that 
		\begin{align}\label{R3}
			\sup_{t\in[0,T]}\sup_{\x\in{\1}}|(J\boldsymbol{\phi})(t,\x)|\leq C_{p,T}\bigg(\int_0^T\|\boldsymbol{\phi}(s)\|_{\LL_{\sigma}^p}^\gamma\d s\bigg)^\frac{1}{\gamma}.
		\end{align} 
	\end{enumerate}
\end{lemma}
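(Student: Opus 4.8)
The plan is to estimate $J\boldsymbol{\phi}$ by treating the inner spatial integral as a convolution (recall $G(t,\x,\y)=G(t,0,\x-\y)$) and combining Young's convolution inequality with the kernel bounds of Lemma \ref{thrm2.1}, followed by a H\"older step in time. First I would handle part (1): fixing $t$ and $s<t$, write the inner integral as a spatial convolution and apply Young's inequality with the exponent relation $\frac1p+1=\frac1\beta+\frac1\alpha$, which is exactly the hypothesis $\frac1\beta=1+\frac1p-\frac1\alpha$. This gives
\begin{align*}
\left\|\int_{\1}\nabla_{\y}G(t-s,\cdot,\y)\cdot\boldsymbol{\phi}(s,\y)\d\y\right\|_{\L^p}\leq \|\nabla G(t-s,0,\cdot)\|_{\L^\beta}\|\boldsymbol{\phi}(s)\|_{\LL_{\sigma}^\alpha}.
\end{align*}
Since $\beta<\frac43$, the kernel estimate \eqref{2.8} yields $\|\nabla G(t-s,0,\cdot)\|_{\L^\beta}\leq C_\beta (t-s)^{\frac1\beta-\frac32}$, and Minkowski's integral inequality (pulling the $\L^p$ norm inside the $s$-integral) then produces the first inequality in \eqref{R2}.

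The second inequality in \eqref{R2} follows from H\"older's inequality in $s$ with exponents $\gamma$ and $\gamma'=\frac{\gamma}{\gamma-1}$. The singular factor is raised to the power $\gamma'$, and $\int_0^t(t-s)^{(\frac1\beta-\frac32)\gamma'}\d s$ converges precisely when $\bigl(\frac32-\frac1\beta\bigr)\gamma'<1$; since $1-\bigl(\frac32-\frac1\beta\bigr)=\frac{2-\beta}{2\beta}$, this is equivalent to $\gamma>\frac{2\beta}{2-\beta}$, exactly the stated hypothesis. Evaluating the integral and taking the $\frac1{\gamma'}$ power produces the power $t^{\frac1\beta-\frac32+\frac{\gamma-1}{\gamma}}$, completing part (1).

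For part (2) the idea is the same but reorganized to obtain a bound that is pointwise in $\x$. I would apply H\"older in $\y$ with exponents $p$ and its conjugate $\beta'=\frac{p}{p-1}$; the key point is that $p>4$ forces $\beta'=\frac{p}{p-1}<\frac43$, so \eqref{2.8} applies and gives $\bigl(\int_{\1}|\nabla_\y G(t-s,\x,\y)|^{\beta'}\d\y\bigr)^{1/\beta'}\leq C(t-s)^{\frac1{\beta'}-\frac32}=C(t-s)^{-\frac12-\frac1p}$ uniformly in $\x$. A H\"older step in $s$ with exponents $\gamma,\gamma'$ then yields the supremum bound, the time integral converging exactly when $\bigl(\frac12+\frac1p\bigr)\gamma'<1$, which one checks (using $1-\bigl(\frac12+\frac1p\bigr)=\frac{p-2}{2p}$) is equivalent to $\gamma>\frac{2p}{p-2}$. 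This establishes \eqref{R3} with a constant $C_{p,T}$ uniform over $(t,\x)\in[0,T]\times\1$.

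The main obstacle is not the estimate but the assertion that $J\boldsymbol{\phi}$ actually lies in $\C([0,T]\times\1)$ rather than merely in $\L^\infty$. I would settle this by a density argument: for smooth $\boldsymbol{\phi}$, which are dense in $\L^\gamma(0,T;\LL_{\sigma}^p(\1))$, the joint continuity of $(t,\x)\mapsto(J\boldsymbol{\phi})(t,\x)$ follows from dominated convergence together with the smoothing of $G$ for $s<t$ and the integrable control of the singularity at $s=t$ furnished by \eqref{2.8}; the uniform bound \eqref{R3} then shows that for general $\boldsymbol{\phi}$ the function $J\boldsymbol{\phi}$ is a uniform limit of continuous functions, hence continuous. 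The delicate point throughout is keeping the time singularity $(t-s)^{-\frac12-\frac1p}$ integrable after the H\"older step, which is precisely what the lower bound on $\gamma$ guarantees.
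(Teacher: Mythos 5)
Your proof is correct. Note that the paper itself offers no proof of this lemma at all — it is recalled verbatim from \cite[Lemma 6]{BFMZ} — and your argument (interpreting the inner integral as a spatial convolution, applying Young's inequality with the exponent relation $\tfrac1\beta=1+\tfrac1p-\tfrac1\alpha$, invoking the kernel bound \eqref{2.8} of Lemma \ref{thrm2.1}, then H\"older in time with the exponent arithmetic showing integrability of the singularity exactly under $\gamma>\tfrac{2\beta}{2-\beta}$, respectively $\gamma>\tfrac{2p}{p-2}$, and finally a density-plus-uniform-limit argument for the continuity claim in part (2)) is precisely the standard route taken in that cited reference, built from the same ingredients this paper supplies.
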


\subsection{The Biot-Savart law}\label{BSL} Let us discuss the Biot-Savart law, which provides an explicit expression of the velocity field $\u^\e$ in terms of the vorticity $\xi^\e$, as described in \cite[Section 2.4.1]{AJMALB} and \cite[Section 1.2]{CMMP}. Taking \textsl{curl} on both side of the relation $\xi^\e=\nabla^{\perp}\cdot\u^\e$, we find
\begin{equation}\label{2.12}
	\left\{
	\begin{aligned}
		-\Delta\u^\e&=\nabla^{\perp}\xi^\e,\\
		\nabla\cdot\u^\e&=0,\\
		\u^\e &\text{ is periodic}.
	\end{aligned}
	\right.
\end{equation}The above system allows us to express velocity field $\u^\e$ in terms of vorticity $\xi^\e$. In terms of the Fourier series, if $\xi^\e(\x)=\frac{1}{2\pi}\sum\limits_{\eta\in\Z_0^2}\xi_\eta^\e e^{i\eta\cdot\x}$, then 
\begin{align}\label{2.13}
	\u^\e(\x)=-\frac{i}{2\pi}\sum_{\eta\in\Z_0^2} \frac{\eta^{\perp}}{|\eta|^2}\xi_\eta^\e e^{i\eta\cdot\x}.
\end{align}It is clear from the above relation that the velocity field $\u^\e$ has one order more of regularity in view of vorticity field $\xi^\e$, that is, if $\xi^\e \in\mathrm{H}^{a-1,p}$ then $\u^\e\in\H^{a,p}$. In particular, the norms $\|\u^\e\|_{\H^{a,p}}$ and $\|\xi^\e\|_{\mathrm{H}^{a-1,p}}$ are equivalent.

From \cite[Section 1.2]{CMMP}, we know that the Biot-Savart law allows us to write the velocity field $\u^\e$ in terms of the vorticity $\xi^\e$ as 
\begin{align}\label{2.14}
	\u^\e(\x)=(\k*\xi^\e)(x)=\int_{\1}\k(\x-\y)\xi^\e(\y)\d \y,
\end{align}where $\k$ is the Biot-Savart kernel given by 
\begin{align}\label{2.15}
	\k=\nabla^\perp G=\left(-\frac{\partial G}{\partial x_2},\frac{\partial G}{\partial x_1}\right),
\end{align}and $G$ is the Green function of $-\Delta$ on the torus with mean zero. 

Let us recall some basic properties of the Biot-Savart kernel from \cite{ZBFFMM}.
\begin{lemma}[{\cite[Lemma 2.17]{ZBFFMM}}]\label{lem2.2}
	For every $p\in[1,2)$, the map $\k$ defined in \eqref{2.15} is an $\mathbb{L}^p(\1)$ divergence free (in the distributional sense) vector field.
\end{lemma}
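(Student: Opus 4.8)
The statement contains two assertions: (i) $\k\in\mathbb{L}^p(\1)$ for every $p\in[1,2)$, and (ii) $\nabla\cdot\k=0$ in the sense of distributions. The plan is to obtain the integrability by isolating the singular part of the Green function $G$ appearing in \eqref{2.15}, which localises the only obstruction to integrability at the diagonal and pins the threshold at $p=2$, and then to deduce the divergence-free property directly from the structure $\k=\nabla^\perp G$.

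For assertion (i), I would use the standard decomposition of the Green function of $-\Delta$ on $\1$ (with zero mean) near the diagonal, namely $G(\x,\y)=-\tfrac{1}{2\pi}\log|\x-\y|+h(\x,\y)$, where the remainder $h$ is smooth: indeed, subtracting the $\R^2$ fundamental solution $-\tfrac{1}{2\pi}\log|\x-\y|$ (whose negative Laplacian is $\delta_0$) leaves a right-hand side that is smooth, so elliptic regularity makes $h$ smooth near the diagonal, while the compatibility constant $-\tfrac{1}{(2\pi)^2}$ is absorbed into $h$. Differentiating and using $\nabla^\perp\log|\z|=\z^\perp/|\z|^2$, the Biot--Savart kernel satisfies
$$\k(\z)=-\frac{1}{2\pi}\frac{\z^\perp}{|\z|^2}+\boldsymbol{r}(\z),\qquad \z=\x-\y,$$
with $\boldsymbol{r}=\nabla^\perp h$ bounded on $\1$; hence $|\k(\z)|\le C|\z|^{-1}$ near the origin and $\k$ is smooth away from it. Passing to polar coordinates,
$$\int_{\1}|\k(\z)|^p\,\d\z\le C\int_0^{1}r^{1-p}\,\d r+C',$$
which is finite precisely when $1-p>-1$, that is $p<2$. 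This yields $\k\in\mathbb{L}^p(\1)$ for all $p\in[1,2)$ and shows the range is sharp.

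For assertion (ii), note that the bound just obtained gives $\nabla G\in\mathbb{L}^p(\1)\subset\mathbb{L}^1(\1)$, so for every real $\phi\in\C^\infty(\1)$ I can integrate by parts on the torus (no boundary terms) to get
$$-\int_{\1}\k\cdot\nabla\phi\,\d\x=\int_{\1}\big[(\partial_{x_2}G)(\partial_{x_1}\phi)-(\partial_{x_1}G)(\partial_{x_2}\phi)\big]\,\d\x=\int_{\1}G\,\big(\partial_{x_1}\partial_{x_2}\phi-\partial_{x_2}\partial_{x_1}\phi\big)\,\d\x=0,$$
by the equality of mixed partials. Hence $\langle\nabla\cdot\k,\phi\rangle=0$ for all test functions $\phi$, i.e. $\k$ is divergence free in the distributional sense. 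Equivalently, in Fourier modes each component of $\k$ is a multiple of $\eta^\perp$ and $\eta\cdot\eta^\perp=0$.

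The main obstacle is the sharp singular analysis underlying assertion (i): one must justify that the remainder $h$ (equivalently $\boldsymbol{r}=\nabla^\perp h$) is genuinely regular, so that the entire non-integrable behaviour of $\k$ is carried by the explicit term $\z^\perp/|\z|^2$ and the threshold is exactly $p=2$. Alternatively, staying within the tools of the excerpt, one may write $\k=\int_0^\infty\nabla^\perp_\x G(t,\cdot,\cdot)\,\d t$ using the heat kernel, bound the short-time part by the estimate \eqref{2.8} and the long-time part by the spectral gap (exponential decay of the nonzero Fourier modes in \eqref{2.6}); this route requires splitting the $\beta$-range, since \eqref{2.8} is recorded only for $\beta<\tfrac{4}{3}$, whereas the singularity analysis covers the full interval $[1,2)$ in one step.
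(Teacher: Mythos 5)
Your proposal is correct, but note that the paper itself contains \emph{no} proof of this statement: Lemma \ref{lem2.2} is imported verbatim, by citation, from \cite[Lemma 2.17]{ZBFFMM}, so there is nothing internal to compare against line by line. What you have written is essentially the standard argument that underlies the cited result: decompose the mean-zero Green function near the diagonal as $G(\x,\y)=-\tfrac{1}{2\pi}\log|\x-\y|+h$, where $h$ is smooth there by elliptic regularity (the planar logarithm removes the Dirac mass, leaving the constant $-\tfrac{1}{(2\pi)^2}$ as a smooth right-hand side), conclude $|\k(\z)|\le C|\z|^{-1}$ near the origin with smoothness elsewhere, and integrate in polar coordinates to get the sharp threshold $p<2$; then obtain the divergence-free property from $\k=\nabla^{\perp}G$ and equality of mixed partials. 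Two points deserve tightening. First, your integration by parts silently identifies the pointwise a.e.\ gradient of $G$ with its distributional gradient (i.e.\ asserts there is no singular part concentrated on the diagonal); this is true because $\log|\z|\in \mathrm{W}^{1,1}_{\mathrm{loc}}(\R^2)$ — excising a disc of radius $\epsilon$ produces a boundary term of order $\epsilon\log\epsilon\to0$ — but it is exactly the kind of step that fails for stronger singularities, so it merits a sentence. Alternatively, your closing Fourier remark is the cleanest complete proof of assertion (ii): by \eqref{2.13} each Fourier mode of $\k$ is proportional to $\eta^{\perp}$, and $\eta\cdot\eta^{\perp}=0$, so $\nabla\cdot\k=0$ as a distribution with no measure-theoretic care needed. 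Second, the decomposition $G=-\tfrac{1}{2\pi}\log|\cdot|+h$ only makes sense locally near the diagonal (the planar logarithm is not periodic), which you implicitly acknowledge but should state. Your observation that the in-paper heat-kernel estimates cannot replace this argument is also accurate: \eqref{2.8} is recorded only for $\beta<\tfrac43$, so it alone cannot reach all $p\in[1,2)$.
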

We have some useful estimates which play a crucial role. Using \eqref{2.13}, the Sobolev embedding $\H^{1,p}\subset\LL_{\sigma}^\infty(\1)$ for $p>2$, and the equivalency of the norms $\|\u^\e\|_{\H^{1,p}}$ and $\|\xi^\e\|_{\L^p}$, we obtain for any $p>2$
\begin{align}\label{2.16}
	\|\k*\xi^\e\|_{\LL_{\sigma}^\infty}=\|\u^\e\|_{\LL_{\sigma}^\infty}\leq C_p\|\u^{\e}\|_{\H^{1,p}}\leq C_p\|\xi^\e\|_{\L^p},
\end{align}for some positive constant $C_p$.

Using \eqref{2.14}, Lemma \ref{lem2.2}, Young's inequality for convolution (with exponent $p\geq 1, \alpha\in[1,2)$, and $ \beta\geq 1$, and $1+\frac{1}{p}=\frac{1}{\alpha}+\frac{1}{\beta}$), we obtain 
\begin{align}\label{2.17}
	\|\k*\xi^\e\|_{\LL_{\sigma}^p}=\|\u^\e\|_{\LL_{\sigma}^p}\leq \|\k\|_{\LL_{\sigma}^\alpha}\|\xi^\e\|_{\L^\beta}.
\end{align}
\subsection{A short note on stochastic forcing term}\label{short}
Let us discuss the stochastic term appearing in the system \eqref{1.2}. Let $(\Omega,\mathscr{F},\{\mathscr{F}_t\}_{t\geq0},\P)$ be a given stochastic basis.
For any given $T>0$, let $\Q:\mathring{\L}_\#^2(\1)\to\mathring{\L}_\#^2(\1)$ be a positive symmetric bounded linear operator. Set $\L_\Q^2(\1)$ as the completion of the space of all square integrable, zero mean-value, periodic functions $\phi:{\1}\to\R$  with respect to the scalar product $(\phi,\psi)_{\L_\Q^2}=(\Q\phi,\psi)_{\L^2}$, for all $\phi,\psi\in\L_\Q^2(\1)$. 

Let $\mathcal{H}_T=\L^2(0,T;\L_\Q^2(\1))$ denote the real separable Hilbert space, with the scalar product \begin{align*}
	(g_1,g_2)_{\mathcal{H}_T}=\int_0^T(g_1(t),g_2(t))_{\L_\Q^2}\d t=\int_0^T(\Q g_1(t),g_2(t))_{\L^2}\d t.
\end{align*} Let us now consider the isonormal Gaussian process $\W=\{\W(h);h\in\mathcal{H}_T\}$ (for more details see \cite{DN}). The map $h\mapsto\W(h)$ provides a linear isometry from $\mathcal{H}_T$ onto $\mathcal{H}$, which is a closed subset of $\L^2(\Omega,\mathscr{F},\P)$, consisted of zero-mean Gaussian random variables and the isometry can be represented as 
\begin{align*}
	\E[\W(h)\W(g)]=(h,g)_{\mathcal{H}_T}.
\end{align*} The stochastic term appearing in \eqref{1.2} should be understand in the following sense, for $h\in\mathcal{H}_T$, we set 
\begin{align}\label{2.18}
	\W(h)=\int_0^T\int_{\1}h(s,\y)\W(\d s,\d\y),
\end{align}namely, the above random variable is a zero-mean Gaussian random variable with the covariance $\E[(\W(h))^2]=\|h\|_{\mathcal{H}_T}^2$.

The stochastic term above, expressed through linear isometry $\W$, can be interpreted in the framework of Walsh (as detailed in \cite{JBW} or \cite{DaZ}). One can express $\W(h)$ as 
\begin{align}\label{2.19}
	\W(h)=\sum_i\int_0^T(h(s),\tilde{e}_i)_{\L_\Q^2}\d\beta_s(\tilde{e}_i),
\end{align}where $\{\tilde{e}_i\}$ is a complete orthonormal basis of $\L_\Q^2(\1)$ and $\beta_s(\tilde{e}_i)=\W(\chi_{[0,s]}\tilde{e}_i)$, hence $\{\beta(\tilde{e}_i)\}$ is a sequence of independent standard 1-dimensional Brownian motions on the probability space $(\Omega,\mathscr{F},\P)$ adapted to the filtration $\{\mathscr{F}_t\}_{t\geq0}$. Setting $\M_t(A):=\W(\chi_{[0,t]}\chi_{A})$ for all $t\in[0,T]$ and $A\in\mathcal{B}_b(\R^2)$, we construct a martingale measure with the covariance operator $\Q$ and \eqref{2.19} coincides with the stochastic integral in the sense of Walsh (\cite{JBW}). Moreover, the isonormal Gaussian process $\W$ can be associated to a $\Q$-Wiener process $\mathscr{W}_t$ on $\mathring{\L}_\#^2(\1)$ (see \cite{DaZ}) in the following way
\begin{align}\label{2.20}
	(\mathscr{W}_t,h)_{\L^2}=\W(\chi_{[0,t]}h), \ \ \text{ for all } \ \ h\in\mathring{\L}_\#^2(\1),
\end{align}and \eqref{2.18} coincides with the integral with respect to $\mathscr{W}$, in a sense made precise in \cite[Section 3.4]{RCDLQS}. The stochastic convolution term appearing on the right-hand side of \eqref{1.3} is meaningful in the ways described above. The stochastic forcing term is periodic with zero-mean in the space variable. It is not unexpected that we need the covariance operator $\Q$ to have some regularizing effect when working in a two-dimensional space, as stated in \cite[Pages 144-146]{DaZ}. To address this issue, one can consider using the covariance operator $\Q$ with an appropriate regularizing effect. For example, one can choose a covariance operator of the  form 
\begin{align}\label{2.21}
	\Q=(-\Delta)^{-a}, \ \ \text{ for some} \ \ a>0.
\end{align}Thus, we have 
$
\Q e_\eta=\frac{1}{|\eta|^{2a}}e_\eta, \ \text{for all} \ \eta\in\Z_0^2,
$ and $\{\tilde{e}_\eta\}$ is a complete orthonormal basis of $\L^2_\Q(\1)$ defined as $\tilde{e}_\eta(\x)=\frac{1}{\sqrt{2}\pi}|\eta|^a\cos(\eta\cdot\x)$ and $\tilde{e}_{-\eta}(\x)=\frac{1}{\sqrt{2}\pi}|\eta|^a\sin(\eta\cdot\x)$, for $\eta\in\Z_+^2$. 
From \cite[Eq$^n$ (2.24)]{BFMZ}, we know that the stochastic term $\int_0^t\int_{\1}\G(t-s,x,y)\W(\d s,\d y)$ is well-defined for $a>0$, which is equivalent to have $G(t-\cdot,x,\cdot)\in\mathcal{H}_t$, for every $t>0$. It can justified as
\begin{align*}
	\|G(t-\cdot,x,\cdot)\|_{\mathcal{H}_t}^2\leq \frac{1}{8\pi^2}\sum_{\eta\in\Z_0^2}\frac{1}{|\eta|^{2+2a}},
\end{align*}the above series is convergent if and only if $a>0$. Thus, the hypothesis $a>0$ is sufficient for space-time continuity of the stochastic convolution's trajectories (see \cite[Theorem 2.13]{GDP1}).  The noise formulation of our work is similar to \cite{BFMZ}, where the author considered two-dimensional SNSE \eqref{1.1} in the vorticity form \eqref{1.2} with $\e=1$, and established the existence and uniqueness of the solution $\xi$ of \eqref{1.2} in the spaces $\C([0,T];\L^p(\1)),$ for $p>2$ and $\C([0,T]\times\1)$ whenever the initial data $\xi_0\in\L^p(\1)$ and $\xi\in\C(\1)$, respectively.

\subsection{Solvability results of the system \eqref{1.2}}\label{Sec3}
The main focus of this subsection is to recall the existence and uniqueness of the solution to the system \eqref{1.2}. We use the Biot-Savart law, which gives the relation  $\u^\e=\k*\xi^\e$. Set $\q(\xi^\e)=\xi^\e(\k*\xi^\e)$,  that is,
\begin{align}\label{S1}
[\q(\xi^\e)](\x)=\xi^\e(\x)\int_{\1}\k(\x-\y)\xi^\e(\y)\d \y.
\end{align}Using H\"older's inequality and \eqref{2.16}, we obtain for any $p>2$
\begin{align}\label{S2}
\|\q(\xi^\e)\|_{\LL_{\sigma}^p}\leq \|\xi^\e\|_{\L^p}\|\k*\xi^\e\|_{\LL_{\sigma}^\infty}\leq C_p\|\xi^\e\|_{\L^p}^2,
\end{align}which implies that $\q:\L^p(\1)\to\LL_{\sigma}^p(\1)$ for any $p>2$. We can write the system  \eqref{1.2} in an equivalent form, where the velocity field $\u^\e$ would not appear. That is, 
\begin{align}\label{S5}\nonumber
\xi^\e(t,\x)&=\int_{\1} G(t,\x,\y)\xi_0(\y)\d\y+\int_0^t\int_{\1}\nabla_{\y} G(t-s,\x,\y) \cdot \q(\xi^\e(s,\cdot))(\y)\d\y\d s\\&\quad +\sqrt{\e}\int_0^t\int_{\1} G(t-s,\x,\y)\W(\d s,\d y), \ \ \P\text{-a.s.}
\end{align}
for all $t\in[0,T]$. 
Let us provide the definition of weak solution (in analytic sense) to the system \eqref{1.2}. 

\begin{definition}
We say that an ${\L}^p$-valued ($p>2$) continuous $\mathscr{F}_t$-adapted stochastic process $\xi^\e$ is a \textsl{weak solution} to the system \eqref{1.2}, if it solves the following, for every $t\in[0,T],\ \phi\in\mathrm{H}^b$ with $b>2,$ we have
\begin{align}\label{S3}\nonumber
	\int_{\1} \xi^\e(t,\y)\phi(\y)\d \y&-\int_0^t\int_{\1}\xi^\e(s,\y)\Delta\phi(\y)\d \x\d s-\int_0^t\int_{\1}\q(\xi^\e(s,\cdot))(\y)\cdot\nabla\phi(\y)\d \y\d s\\&=\int_{\1}\xi_0(\y)\phi(\y)\d \y+\sqrt{\e}\int_0^t\int_{\1}\phi(\y)\W(\d s,\d \y),   \ \ \P\text{-a.s.}
\end{align}
\end{definition}The third term appearing in the left hand side of \eqref{S3} is well-defined, which can be justified as follows: for $s>1$
\begin{align}\label{S4}\nonumber
\bigg|\int_{\1}\q(\xi^\e(s,\cdot))(\y)\cdot\nabla\phi(\y)\d \y\bigg|&\leq \|\nabla\phi\|_{\L^\infty}\|\q(\xi^\e(s,\cdot))\|_{\LL_\sigma^1}\\&\nonumber\leq C\|\nabla\phi\|_{\mathrm{H}^s}\|\k*\xi^\e(s,\cdot)\|_{\LL_{\sigma}^2}\|\xi^\e(s,\cdot)\|_{\L^2}\\&\leq C\|\phi\|_{\mathrm{H}^{s+1}}\|\xi^\e(s,\cdot)\|_{\L^2}^2,
\end{align}where we have used \eqref{2.17}, H\"older's and Sobolev's inequalities.

By the equivalency of solution concepts given in \cite[Proposition 3.5]{IG}, we consider the solution to the system \eqref{1.2} according to Walsh (see \cite{JBW}) being provided by the integral equation \eqref{1.3}.

Let us recall the the existence and uniqueness result for the solution to the system \eqref{1.2} from \cite{BFMZ}.
\begin{proposition}[{\cite[Theorem 1]{BFMZ}}]
Let us assume that $\xi_0\in\L^p({\1})$, for $p>2$, and  $a>0$ in \eqref{2.21}. Then, there exists a unique $\L^p$-valued $\mathscr{F}_t$-adapted continuous process $\xi^\e$ satisfying the integral equation \eqref{1.3}. Moreover, if $\xi_0\in\C({\1})$, then the solution admits a modification which is a space-time continuous process.
\end{proposition}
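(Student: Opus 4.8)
The plan is to recast the integral equation \eqref{1.3} as a pathwise fixed point problem by first stripping off the stochastic forcing. Writing $Z^\e(t,\x):=\sqrt{\e}\int_0^t\int_{\1}G(t-s,\x,\y)\W(\d s,\d\y)$ for the stochastic convolution, the discussion in Subsection \ref{short} — in particular the bound $\|G(t-\cdot,x,\cdot)\|_{\mathcal H_t}^2\le\frac{1}{8\pi^2}\sum_{\eta\in\Z_0^2}|\eta|^{-2-2a}$, finite precisely when $a>0$ — together with the factorization method and Kolmogorov's continuity criterion shows that, $\P$-a.s., $Z^\e\in\C([0,T];\L^p(\1))$ for every $p>2$, and $Z^\e\in\C([0,T]\times\1)$ when the extra regularity is present. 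I would then set $v^\e:=\xi^\e-Z^\e$, so that $v^\e$ must solve, for a.e.\ $\omega$, the deterministic (random-coefficient) equation $v^\e(t)=\S(t)\xi_0+J\big(\q(v^\e+Z^\e)\big)(t)$, with $\q$ and $J$ as in \eqref{S1} and \eqref{R1}. This reduction is the essential simplification: it removes the martingale part and lets me treat the problem pathwise.

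For local well-posedness I would run a Banach fixed point argument for $\Phi(v)(t):=\S(t)\xi_0+(J\q(v+Z^\e))(t)$ on a closed ball of $\C([0,T_0];\L^p(\1))$ for small $T_0>0$. The two ingredients are the bilinear bound \eqref{S2}, $\|\q(\xi)\|_{\LL_\sigma^p}\le C_p\|\xi\|_{\L^p}^2$, and part (1) of Lemma \ref{lem25} with $\alpha=p$ (hence $\beta=1$), which gives $\|(J\q(\xi))(t)\|_{\L^p}\le C\int_0^t(t-s)^{-1/2}\|\xi(s)\|_{\L^p}^2\d s$. The integrable singular kernel $(t-s)^{-1/2}$ produces a positive power of $T_0$ in front of the quadratic term, so both the self-mapping property and the contraction property (via $\q(\xi_1)-\q(\xi_2)$ being bilinear in $\xi_1-\xi_2$ and $\xi_1+\xi_2$) hold once $T_0$ is small, depending only on $\|\xi_0\|_{\L^p}$ and $\sup_{[0,T]}\|Z^\e\|_{\L^p}$. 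This yields a unique local solution; uniqueness on the whole interval follows from the same Gronwall-type estimate applied to the difference of two solutions.

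To globalize I would derive an a priori bound on $\|\xi^\e(t)\|_{\L^p}$ uniform on $[0,T]$, using the equivalence of the mild and weak formulations (in the sense of \cite[Proposition 3.5]{IG}, invoked after \eqref{S4}). Testing the weak form against $|\xi^\e|^{p-2}\xi^\e$ and exploiting $\div\u^\e=0$, the nonlinear contribution $\int_{\1}(\u^\e\cdot\nabla\xi^\e)|\xi^\e|^{p-2}\xi^\e\,\d\x=\frac1p\int_{\1}\u^\e\cdot\nabla(|\xi^\e|^p)\,\d\x=0$ cancels identically; what remains is linear in $\|\xi^\e\|_{\L^p}^p$ and driven only by the noise, so a stochastic Gronwall argument yields $\sup_{t\in[0,T]}\|\xi^\e(t)\|_{\L^p}<\infty$, $\P$-a.s. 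Since the local existence time depends only on this norm, the solution extends to all of $[0,T]$.

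Finally, for $\xi_0\in\C(\1)$ I would bootstrap to space-time continuity: $\S(t)\xi_0$ is continuous on $[0,T]\times\1$ for continuous data, $Z^\e$ admits a continuous modification by the first step, and once the global $\L^p$ bound is available with $p>4$ one feeds $\q(\xi^\e)\in\L^\gamma(0,T;\LL_\sigma^p)$ into part (2) of Lemma \ref{lem25} (estimate \eqref{R3}) to conclude $J\q(\xi^\e)\in\C([0,T]\times\1)$. I expect the main obstacle to be globalization: the nonlinearity is only quadratic and the fixed point is purely local, so everything hinges on the divergence-free cancellation that annihilates the nonlinear term in the $\L^p$ energy balance — without that structural identity a quadratic Volterra inequality need not have global solutions. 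A secondary technical point is verifying that the stochastic convolution genuinely lives in the stronger topology $\C([0,T]\times\1)$, which is exactly where the regularizing hypothesis $a>0$ on $\Q$ enters.
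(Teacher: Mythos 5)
Your proposal is correct and takes essentially the same route as the proof this paper relies on: the proposition is stated by citation to \cite[Theorem 1]{BFMZ}, and the argument there --- mirrored by the machinery this paper recalls, namely the splitting $\xi^\e=\beta+\zeta$ of \eqref{L5}--\eqref{L6}, the heat-kernel smoothing estimate of Lemma \ref{lem25}, the divergence-free cancellation underlying Lemma \ref{lemUF}, and Lemma \ref{lem25}(2) for space-time continuity --- is precisely your decomposition into stochastic convolution plus a pathwise quadratic equation, a fixed-point argument, an $\L^p$ energy bound, and a continuity bootstrap. The only minor deviations are that \cite{BFMZ} (like this paper in Section \ref{FD}) localizes with the truncation $\Pi_R$ and a weighted-norm fixed point rather than a small-time contraction, and the a priori estimate is carried out pathwise for $\beta=\xi^\e-\zeta$, where the cancellation kills only the highest-order term and the remaining cross terms in $\zeta$ are closed by Gronwall (Lemma \ref{lemUF}), rather than via a stochastic Gronwall inequality for $\xi^\e$ itself.
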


\section{Uniform Large Deviation Principle: Additive Noise}\label{LDP}\setcounter{equation}{0}
In this section, we first recall the basics of  large deviation theory. Then, we discuss the concept of uniform large deviation principle (ULDP) and establish it for the law of the solution of the system \eqref{1.2}. 
\subsection{Large deviation principle}Let $\EE$ denote a  Polish space (complete separable metric space) with the Borel $\sigma$-field $\mathcal{B}(\EE)$. Let $\{\mu^\e\}_{\e>0}$ be a family of probability measures on $(\EE,\mathcal{B}(\EE))$. We provide some basic definitions related to large deviation principle.
\begin{definition}[Rate function]
A function $\I:\EE\to[0,\infty]$ is called a \textsl{rate function} on $\EE$ if it is a lower semicontinuous in $\EE$. A rate function $\I(\cdot)$ on $\EE$ is said to be a \textsl{good rate function} on $\EE$ if for every $M\in[0,\infty)$, the level set $K:=\{x\in\EE:\I(x)\leq M\}$ is a compact subset of $\EE$.
\end{definition}
\begin{definition}[Large deviation principle]
Let $\I:\EE\to[0,\infty]$ be a rate function on a Polish space $\EE$. The sequence $\{\mu^{\e}\}_{\e>0}$ satisfies the LDP on the space $\EE$ with the rate function $\I(\cdot)$, if the following hold:
\begin{enumerate}
	\item For any closed subset $\F\subset\EE$,
	\begin{align*}
		\limsup_{\e\to0}\e\log\mu^{\e}(\F)\leq -\inf_{x\in\F}\I(x).
	\end{align*}	\item For any open subset $\mathrm{O}\subset\EE$, 
	\begin{align*}
		\liminf_{\e\to0}\e\log\mu^{\e}(\mathrm{O}) \geq -\inf_{x\in\mathrm{O}}\I(x).
	\end{align*}
\end{enumerate}
\end{definition}
Let us now discuss the LDP for $\EE$-valued random variables. Let $(\Omega,\mathscr{F},\{\mathscr{F}_t\}_{t\geq0},\P)$ be a complete probability space satisfying  the usual conditions. 

Given $\e>0$, let $\mathscr{G}^\e:\EE_0\times \C([0,T]\times\1)\to\EE$, (where $\EE_0$ is an another Polish space) be a measurable map and set 
\begin{align}\label{U1}
\mathrm{Y}^\e=\mathscr{G}^\e(\W)	, \ \ \text{ for all } \e>0.
\end{align} Then, $\{\mathrm{Y}^\e\}_{\e>0}$ is a family of $\EE$-valued random variables. Let $\{\mu^\e\}_{\e>0}$ be the the law of $\{\mathrm{Y}^\e\}_{\e>0}$ on the space $\EE$. Then, the family $\{\mathrm{Y}^\e\}_{\e>0}$ is said to satisfy the LDP on the space $\EE$  if the family of measures $\{\mu^\e\}_{\e>0}$ satisfies the LDP on the space $\EE$.

For any given $M>0$, define the following sets:
\begin{align}\label{U2}
\mathfrak{U}^M&:=\bigg\{\phi\in\L^2([0,T]\times\1): \int_0^T\int_{\1}|v(s,\y)|^2\d \y\d s\leq M\bigg\},\\\label{U3}
\mathscr{P}_2&:=\bigg\{v: \int_0^T\int_{\1}|v(s,\y)|^2\d \y\d s<\infty, \ \P\text{-a.s.}\bigg\},\\\label{U4}
\mathscr{P}_2^M&:=\bigg\{v\in\mathscr{P}_2: \phi(\omega)\in\mathfrak{U}^M, \ \P\text{-a.s.}\bigg\}.
\end{align}Note that $\mathfrak{U}_M$ is a compact metric space equipped with the weak topology of $\L^2([0,T]\times\1)$. 

For every $\psi\in\EE$, we define a map $\I(\cdot):\EE\to [0,\infty]$ by 
\begin{align}\label{U5}
\I(\psi):=\inf_{\left\{v\in \L^2([0,T]\times\1):\psi= \mathscr{G}^0\left(\int_0^{\cdot}\int_{\1}v(s,\y)\d \y\d s\right)\right\}}\bigg\{\frac{1}{2}&\int_0^T\int_{\1}|v(s,\y)|^2\d \y\d s\bigg\}, 
\end{align}with the convention that the infimum over the empty set is infinite. 
We consider the following linear stochastic equation:
\begin{equation}\label{U6}
\left\{
\begin{aligned}
	\d \zeta^\e(t,\x)+\A\zeta^\e(t,\x)\d t&=\sqrt{\e}\W(\d t,\d\x),\ \ (t,x)\in(0,T)\times\mathbb{T}^2,\\
	\zeta^\e(0,\x)&=0.
\end{aligned}
\right.
\end{equation} 
The solution $	\zeta^{\e}(t,\x)=\sqrt{\e}\int_0^t\int_{\1}G(t-s,\x,\y)\W(\d s,\d\y), \  (t,\x)\in(0,T]\times{\1}$ is a stochastic convolution that is Gaussian and independent of the initial condition $\xi_0$. 
From \cite[Theorem 12.11]{DaZ}, we know that the LDP holds in $\C([0,T];\L^2(\1))$ for the laws of solutions of  the linear stochastic system \eqref{U6}.  Note that $\L^p(\1)$, for $p>2$ is  a Banach space continuously, and as a Borel set, embedded into $\L^2(\1)$. Then also the spaces $\C([0, T ]; \L^p(\1))$ and $\L^2(0, T ;\L^p(\1))$ are continuously, and as a Borel set, embedded into $\L^2(0,T;\L^2(\1))$. Taking into account of \cite[Theorem 12.16]{DaZ}, one can obtain the LDP holds in $\C([0,T];\L^p(\1))$, for $p>2$.  Using similar arguments one can conclude that the LDP holds in $\C([0,T]\times\1)$.

\subsection{Uniform large deviation principle}\label{SULDP}In this subsection, we recall the definition ULDP, to be more specific, Freidlin-Wentzell uniform large deviation principle (FWULDP), see \cite{MIFADW} from the work \cite{MS}, and a uniform contraction principle for proving  ULDP in a separable Banach space from \cite{BW}. 

Let $(\EE,d)$ be a Polish space and $\EE_0$ be a set. For any $\xi_0\in\EE_0$, define a rate function, $\I_{\xi_0}:\EE\to[0,\infty]$ (cf. \cite{ABPFD,MS,LS23}). Let $\Lambda_{\xi_0}(s_0)$ denote the level set
\begin{align*} 
\Lambda_{\xi_0}(s_0):=\big\{f\in\EE:\I_{\xi_0}(\psi)\leq s_0\big\},
\end{align*} 
for $s_0\in[0,\infty]$. Furthermore, let $\mathscr{T}$ be the collection of subsets of  $\EE_0$ and $b(\e)$ be a function which converges to $0$ as $\e\to0$.  

If $g:=g(t,x)$ is a random field and $\EE$ is a function space (will be fixed later), then $g$-a.s. in $\EE$ means that $g$ has a stochastic version in $\EE$, $\mathbb{P}$-a.s. For any metric space $(\EE,d)$, the distance between an element $h\in\EE$ and a set $\B\subset \EE$ is defined by 
\begin{align*}
\mathrm{dist}_{\EE}(g,\B):=\inf_{h\in\B}d (g,h).
\end{align*}

\begin{definition}[FWULDP]\label{def6.1}
A family of $\EE$-valued random variables $\{\mathrm{Y}_{\xi_0}^\e\}_{\e>0}$ is said to satisfy a \textsl{Freidlin-Wentzell uniform large deviation principle} with the speed $b(\e)$ and the rate function $\I_{\xi_0}(\cdot)$, uniformly over $\mathscr{T}$ if the following hold:
\begin{enumerate}
	\item For any $\mathrm{E}\in\mathscr{T},\ t_0\geq 0,$ and $\delta>0$
	\begin{align*}
		\liminf_{\e\to0}\inf_{\xi_0\in\mathrm{E}}\inf_{\psi\in\Lambda_{\xi_0}(t_0)}\bigg\{b(\e)\log\mu^{\e}\big(d (\mathrm{Y}_{\xi_0}^\e,\psi)<\delta\big)+\I_{\xi_0}(\psi)\bigg\}\geq 0.
	\end{align*}\item For any $\mathrm{E}\in\mathscr{T},\ t_0\geq 0,$ and $\delta>0$
	\begin{align*}
		\limsup_{\e\to0}\sup_{\xi_0\in\mathrm{E}}\sup_{s\in[0,t_0]}\bigg\{b(\e)\log\mu^{\e}\big(\mathrm{dist}_{\EE} (\mathrm{Y}_{\xi_0}^\e,\Lambda_{\xi_0}(s))\geq\delta\big)+s\bigg\}\leq 0.
	\end{align*}
\end{enumerate}	
\end{definition}
\begin{proposition}[{\cite[Uniform contraction principle, Theorem 2.5]{BW}}]\label{thrmU2}
Let $\mathscr{T}$ be a nonempty set and $\mathrm{X}_1,\mathrm{X}_2$ be any two separable Banach spaces. Further assume that the family $\{\nu^\e\}_{\e>0}$ of probability measures on $(\mathrm{X}_1,\mathcal{B}(\mathrm{X}_1))$, satisfies an LDP with the good rate function $\I:\mathrm{X}_1\to[0,\infty]$. For any given $\xi_0\in\mathscr{T}$, let $\S_{\xi_0}:\mathrm{X}_1\to\mathrm{X}_2$ be a locally Lipschitz continuous mapping in the sense that for every $R>0$, there exists a positive constant $L_R$, such that for all ${\xi_0}\in\mathscr{T}, x_1,x_2\in\mathrm{X}_1$ with $\|x_1\|_{\mathrm{X_1}}\leq R$ and $\|x_1\|_{\mathrm{X_2}}\leq R$,
\begin{align*}
	\|\S_{\xi_0}(x_1)-\S_{\xi_0}(x_2)\|_{\mathrm{X_2}}\leq L_R\|x_1-x_2\|_{\mathrm{X_1}}.
\end{align*}For any given ${\xi_0}\in\mathscr{T}$ and $\e>0$, let $\mu^{\e}_{\xi_0}=\nu^{\e}\circ (\S_{\xi_0})^{-1}$. Then, the family $\{\mu^\e_{\xi_0}\}_{\e>0}$ of measures satisfies the LDP on the space $\mathrm{X}_2$ uniformly in ${\xi_0}\in\mathscr{T}$ with the rate function $\J_{\xi_0}$ defined as 
\begin{align*}
	\J_{\xi_0}(y)=\inf\{\I(x):\ x\in(\S_{\xi_0})^{-1}(y)\}, \ \ \text{ for all } \ \ y\in\mathrm{X_2}.
\end{align*}  
\end{proposition}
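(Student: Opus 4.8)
The plan is to read Proposition \ref{thrmU2} as a uniform-in-$\xi_0$ upgrade of the classical contraction principle (Theorem 4.2.1 of \cite{ADOZ}): for each fixed $\xi_0$ the map $\S_{\xi_0}$ is continuous, so the ordinary contraction principle already delivers an LDP on $\mathrm{X}_2$ with rate function $\J_{\xi_0}$, and the whole content of the proposition is the \emph{uniformity} over $\xi_0\in\mathscr{T}$ of the two Freidlin--Wentzell bounds of Definition \ref{def6.1}. Two structural facts drive everything. First, because $\I$ is a \emph{good} rate function, each sublevel set $K_s:=\{x\in\mathrm{X}_1:\I(x)\le s\}$ is compact, hence bounded, and is the same for all $\xi_0$ (the LDP for $\{\nu^\e\}$ is $\xi_0$-independent); moreover the full-space upper bound forces $\inf_{\mathrm{X}_1}\I=0$, so every $K_s$ with $s\ge 0$ is nonempty. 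Second, the local Lipschitz constant $L_R$ is uniform in $\xi_0$, so on a fixed $\mathrm{X}_1$-ball all the maps $\S_{\xi_0}$ share one modulus of continuity. I would first record the identity $\Lambda_{\xi_0}(s)=\S_{\xi_0}(K_s)$: the inclusion $\supseteq$ is immediate, while $\subseteq$ follows because a minimizing sequence for $\J_{\xi_0}(y)\le s$ lies in $K_{s+1/n}$ and, by compactness and lower semicontinuity, has a limit $x\in K_s$ with $\S_{\xi_0}(x)=y$. In particular $\Lambda_{\xi_0}(s)$ is compact, $\J_{\xi_0}$ is a good rate function, and the defining infimum is attained.

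For the upper bound, Definition \ref{def6.1}(2), fix $t_0$ and $\delta>0$, let $R:=R_0+1$ with $K_{t_0}\subseteq\{\|x\|_{\mathrm{X}_1}\le R_0\}$, and set $\rho:=\min\{1,\delta/(2L_R)\}$. The key transfer observation is that for every $s\le t_0$ and every $\xi_0$,
\begin{align*}
\big\{x:\mathrm{dist}_{\mathrm{X}_2}(\S_{\xi_0}(x),\Lambda_{\xi_0}(s))\ge\delta\big\}\subseteq A_s:=\big\{x:\mathrm{dist}_{\mathrm{X}_1}(x,K_s)\ge\rho\big\},
\end{align*}
since if $\mathrm{dist}_{\mathrm{X}_1}(x,K_s)<\rho$ then some $x'\in K_s\subseteq K_{t_0}$ has $\|x-x'\|_{\mathrm{X}_1}<\rho$, so $x,x'$ lie in the $\mathrm{X}_1$-ball of radius $R$ and $\|\S_{\xi_0}(x)-\S_{\xi_0}(x')\|_{\mathrm{X}_2}\le L_R\rho\le\delta/2$ with $\S_{\xi_0}(x')\in\Lambda_{\xi_0}(s)$. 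The set $A_s$ is \emph{independent of} $\xi_0$, so $\sup_{\xi_0}\mu^\e_{\xi_0}(\mathrm{dist}_{\mathrm{X}_2}(\mathrm{Y}^\e_{\xi_0},\Lambda_{\xi_0}(s))\ge\delta)\le\nu^\e(A_s)$, which discharges the $\sup_{\xi_0}$ for free. As $A_s$ is closed and disjoint from $K_s$ one has $\inf_{A_s}\I\ge s$, and $A_s$ is decreasing in $s$. Partitioning $[0,t_0]$ into finitely many intervals of mesh $<\gamma$ and using this monotonicity bounds $\sup_{s\le t_0}\{b(\e)\log\nu^\e(A_s)+s\}$ by a finite maximum plus $\gamma$; the LDP upper bound applies termwise to give $\limsup_\e\le\max_i(-\inf_{A_{s_i}}\I+s_i)+\gamma\le\gamma$, and letting $\gamma\downarrow 0$ closes the bound uniformly in $\xi_0$ and $s$.

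For the lower bound, Definition \ref{def6.1}(1), fix $t_0,\delta>0$ and, for each $\xi_0$ and each $\psi\in\Lambda_{\xi_0}(t_0)$, pick a minimizer $x^\ast=x^\ast(\xi_0,\psi)\in K_{t_0}$ with $\S_{\xi_0}(x^\ast)=\psi$ and $\I(x^\ast)=\J_{\xi_0}(\psi)$. With the same $R,\rho$, the open ball $B_{\mathrm{X}_1}(x^\ast,\rho)\subseteq\{x:\|\S_{\xi_0}(x)-\psi\|_{\mathrm{X}_2}<\delta\}$, whence
\begin{align*}
\inf_{\xi_0}\inf_{\psi\in\Lambda_{\xi_0}(t_0)}\big\{b(\e)\log\mu^\e_{\xi_0}\big(d(\mathrm{Y}^\e_{\xi_0},\psi)<\delta\big)+\J_{\xi_0}(\psi)\big\}\ge\inf_{x^\ast\in K_{t_0}}\big\{b(\e)\log\nu^\e\big(B_{\mathrm{X}_1}(x^\ast,\rho)\big)+\I(x^\ast)\big\}.
\end{align*}
It then remains to show $\liminf_\e\inf_{x^\ast\in K_{t_0}}\{b(\e)\log\nu^\e(B_{\mathrm{X}_1}(x^\ast,\rho))+\I(x^\ast)\}\ge 0$, which I would prove by contradiction: a violating sequence $(\e_k,x^\ast_k)$ admits, by compactness of $K_{t_0}$, a subsequence $x^\ast_k\to x_\infty\in K_{t_0}$; for large $k$ the fixed ball $B_{\mathrm{X}_1}(x_\infty,\rho/2)\subseteq B_{\mathrm{X}_1}(x^\ast_k,\rho)$, so the LDP lower bound gives $\liminf_k b(\e_k)\log\nu^{\e_k}(B_{\mathrm{X}_1}(x^\ast_k,\rho))\ge-\I(x_\infty)$, lower semicontinuity gives $\liminf_k\I(x^\ast_k)\ge\I(x_\infty)$, and superadditivity of $\liminf$ forces the sum to be $\ge 0$, contradicting the violation.

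Since the single-map statement is classical, the principal obstacle is precisely this passage to uniformity, i.e.\ the interchange of $\limsup_\e$ (resp.\ $\liminf_\e$) with the suprema/infima over $\xi_0$ and over $s$ or $\psi$. Both enabling ingredients are indispensable: the compactness of the sublevel sets of the good rate function $\I$ makes the finite-cover and subsequence arguments work and renders the reductions $\xi_0$-free once the radius $\rho$ is fixed, while the $\xi_0$-uniformity of the Lipschitz constant $L_R$ supplies a single radius $\rho$ and a single $\xi_0$-independent comparison set $A_s$. I expect the verification of $\liminf_\e\inf_{x^\ast\in K_{t_0}}\{\cdots\}\ge 0$ (and its upper-bound counterpart over $s\in[0,t_0]$) to be the delicate step, as it is where goodness of $\I$ is genuinely used rather than mere continuity of the maps.
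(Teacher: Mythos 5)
Your proof is correct, but note that there is nothing in the paper to compare it against: Proposition \ref{thrmU2} is stated purely as a citation of \cite[Theorem 2.5]{BW} and is used as a black box; the paper never reproduces Wang's proof. Your reconstruction is a sound, self-contained argument, and it isolates exactly the two ingredients that make the uniformity work: goodness of $\I$ (so that $K_s=\{x\in\mathrm{X}_1:\I(x)\le s\}$ is compact and, via the identity $\Lambda_{\xi_0}(s)=\S_{\xi_0}(K_s)$, each level set of $\J_{\xi_0}$ is compact with the defining infimum attained), and the $\xi_0$-uniform local Lipschitz constant (which yields a single radius $\rho$ and the $\xi_0$-independent comparison set $A_s=\{x:\mathrm{dist}_{\mathrm{X}_1}(x,K_s)\ge\rho\}$). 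The individual steps all check out: $x\in A_s$ forces $x\notin K_s$, hence $\inf_{A_s}\I\ge s$, and the LDP upper bound applies to the closed set $A_s$; the finite partition of $[0,t_0]$ together with the monotonicity $A_s\subseteq A_{s_i}$ for $s\ge s_i$ legitimately interchanges $\limsup_{\e\to0}$ with $\sup_{s\in[0,t_0]}$ and $\sup_{\xi_0}$; and in the lower bound the compactness-plus-lower-semicontinuity contradiction is valid, the superadditivity of $\liminf$ being applicable because $\I(x_\infty)\le t_0<\infty$ keeps both terms bounded. One small repair you made silently and should flag: the local Lipschitz hypothesis as printed in the paper, namely ``$\|x_1\|_{\mathrm{X}_1}\le R$ and $\|x_1\|_{\mathrm{X}_2}\le R$'', is evidently a typo for ``$\|x_1\|_{\mathrm{X}_1}\le R$ and $\|x_2\|_{\mathrm{X}_1}\le R$''; your proof uses the corrected reading, which is also the reading under which the proposition is actually applied in Lemma \ref{lem4.8} and Theorem \ref{thrmUL}.
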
 
Let us now move to the proof of ULDP for the system \eqref{1.2}, with respect to the bounded initial conditions $\xi_0$ in $\L^p(\1)$-norm, for $p>2$ with the help of uniform contraction principle (Proposition \ref{thrmU2}).

A solution to the following linear stochastic equation:
\begin{equation}\label{L2}
\left\{
\begin{aligned}
	\d \zeta(t,\x)+\A \zeta(t,\x)\d t&=\W(\d t,\d \x), \ \ (t,\x)\in(0,T]\times{\1},\\
	\zeta(0,\x)&=0,
\end{aligned}
\right.
\end{equation}is defined by  the variation of constant formula: 
\begin{align}\label{L3}
\zeta(t,\x)=\int_0^t\int_{\1}G(t-s,\x,\y)\W(\d s,\d\y), \ \ (t,\x)\in(0,T]\times{\1}.
\end{align}Furthermore, we have the following result:
\begin{lemma}[{\cite[Lemma 7]{BFMZ}}]For $a>0$ in \eqref{2.21} and $p>2$, we have 
\begin{align}\label{L4}
	\E\bigg[\sup_{t\in[0,T]}\|\zeta(t)\|_{\L^p}^p\bigg]<\infty.
\end{align}Moreover, $\zeta(\cdot,\cdot)$ has a continuous modification on $[0,T]\times{\1}$.
\end{lemma}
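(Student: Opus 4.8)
The plan is to exploit that, for each fixed $(t,\x)$, the random variable $\zeta(t,\x)=\int_0^t\int_{\1}G(t-s,\x,\y)\W(\d s,\d\y)$ is centered Gaussian (the integrand being deterministic), so that all its $\L^q(\Omega)$-moments are comparable to its variance; every estimate below therefore reduces to a variance computation, which I carry out in Fourier variables using $\Q=\A^{-a}$. Writing $\zeta(t,\x)=\frac{1}{2\pi}\sum_{\eta\in\Z_0^2}\hat\zeta_\eta(t)e^{i\eta\cdot\x}$, a direct computation with the $\L_\Q^2$-inner product gives
\[
\E|\hat\zeta_\eta(t)|^2=\frac{1}{|\eta|^{2a}}\int_0^t e^{-2(t-s)|\eta|^2}\,\d s\le \frac{C}{|\eta|^{2+2a}},
\]
and the series $\sum_{\eta\in\Z_0^2}|\eta|^{-(2+2a)}$ is summable precisely because $a>0$; this single fact underlies both assertions.

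For the moment bound \eqref{L4} I would use the factorization method. Fix $\alpha\in(0,\min\{1/2,a/2\})$, a nonempty interval since $a>0$, and set $Y_\alpha(s,\x)=\int_0^s\int_{\1}(s-r)^{-\alpha}G(s-r,\x,\y)\W(\d r,\d\y)$. The Beta-function identity $\int_r^t(t-s)^{\alpha-1}(s-r)^{-\alpha}\,\d s=\pi/\sin(\pi\alpha)$ together with the semigroup property of $\S(\cdot)=e^{-\cdot\A}$ yields, via a stochastic Fubini argument, the factorization
\[
\zeta(t)=\frac{\sin(\pi\alpha)}{\pi}\int_0^t(t-s)^{\alpha-1}\S(t-s)Y_\alpha(s)\,\d s.
\]
The variance of $Y_\alpha(s,\x)$ equals $\frac{1}{4\pi^2}\sum_\eta\frac{1}{|\eta|^{2a}}\int_0^s(s-r)^{-2\alpha}e^{-2(s-r)|\eta|^2}\,\d r\le C_\alpha\sum_\eta|\eta|^{-(2+2a-4\alpha)}$, which is finite and uniform in $(s,\x)$ exactly because $\alpha<1/2$ (integrability of the time integral) and $\alpha<a/2$ (summability of the series). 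By Gaussianity $\sup_{s,\x}\E|Y_\alpha(s,\x)|^q\le C_q$ for every $q\ge 1$; choosing $q\ge p$ with $q>1/\alpha$ and applying Fubini shows $Y_\alpha\in\L^q(0,T;\L^p({\1}))$ almost surely with finite $q$-th moment. Finally, the operator $R_\alpha g(t)=\int_0^t(t-s)^{\alpha-1}\S(t-s)g(s)\,\d s$ maps $\L^q(0,T;\L^p({\1}))$ boundedly into $\C([0,T];\L^p({\1}))$ whenever $\alpha>1/q$ (by Hölder's inequality in $s$, using that $\S(\cdot)$ is a bounded semigroup on $\L^p({\1})$ and $(t-\cdot)^{\alpha-1}\in\L^{q'}(0,t)$), which delivers both $\zeta\in\C([0,T];\L^p({\1}))$ and $\E[\sup_{t\in[0,T]}\|\zeta(t)\|_{\L^p}^q]<\infty$; since $q\ge p$, \eqref{L4} follows.

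For the space-time continuity I would estimate the increments of $\zeta$ and invoke the Kolmogorov--Chentsov theorem on the three-dimensional parameter domain $[0,T]\times{\1}$. As the increment $\zeta(t,\x)-\zeta(t',\x')$ is again centered Gaussian, it suffices to bound its variance. For the spatial increment, $|e^{i\eta\cdot\x}-e^{i\eta\cdot\x'}|^2\le\min\{4,|\eta|^2|\x-\x'|^2\}$ combined with $\E|\hat\zeta_\eta(t)|^2\le C|\eta|^{-(2+2a)}$ and a frequency split at $|\eta|\sim|\x-\x'|^{-1}$ gives $\E|\zeta(t,\x)-\zeta(t,\x')|^2\le C|\x-\x'|^{\rho_1}$; the analogous split applied to $\E|\hat\zeta_\eta(t)-\hat\zeta_\eta(t')|^2\le C|\eta|^{-(2+2a)}\min\{1,|t-t'||\eta|^2\}$ gives $\E|\zeta(t,\x)-\zeta(t',\x)|^2\le C|t-t'|^{\rho_2}$, with $\rho_1,\rho_2>0$ for every $a>0$. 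Gaussianity upgrades these second-moment bounds to $\E|\zeta(t,\x)-\zeta(t',\x')|^q\le C_q(|t-t'|^{\rho_2}+|\x-\x'|^{\rho_1})^{q/2}$ for arbitrarily large $q$, so taking $q$ large enough to beat the parameter dimension in Kolmogorov's criterion yields a modification of $\zeta$ that is (Hölder) continuous on $[0,T]\times{\1}$.

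The main obstacle is twofold and, in both places, is exactly where the regularizing hypothesis $a>0$ becomes indispensable. In the factorization step one must simultaneously balance the exponents: $\alpha$ has to be small enough ($\alpha<\min\{1/2,a/2\}$) for the variance of $Y_\alpha$ to be finite and uniform, yet large enough ($\alpha>1/q$, with $q$ then chosen accordingly) for $R_\alpha$ to return continuous-in-time paths; this interval of admissible $\alpha$ is nonempty only because $a>0$. In the continuity step, the delicate point is verifying that the frequency-split sums yield strictly positive Hölder exponents $\rho_1,\rho_2$ rather than divergent series, which again holds for all $a>0$. Everything else (the Gaussian moment comparison, the boundedness of $R_\alpha$, and the Kolmogorov--Chentsov upgrade) is routine once these summability thresholds are secured.
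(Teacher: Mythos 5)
The paper itself contains no proof of this lemma: it is imported verbatim from \cite[Lemma 7]{BFMZ}, and the only trace of an argument in the present paper is the remark in Subsection \ref{short} that $\|G(t-\cdot,\x,\cdot)\|_{\mathcal{H}_t}^2\le\frac{1}{8\pi^2}\sum_{\eta\in\Z_0^2}|\eta|^{-2-2a}$, a series which converges precisely when $a>0$. Your proposal is a correct, self-contained proof, and it runs along the standard lines by which this result is established in the cited literature: the Da Prato--Zabczyk factorization method for the bound \eqref{L4}, and a Kolmogorov--Chentsov argument for space--time continuity, both resting on exactly the summability fact the paper records. Your exponent bookkeeping checks out: the variance of $Y_\alpha$ is finite and uniform in $(s,\x)$ if and only if $\alpha<\min\{1/2,a/2\}$ (the time integral needs $2\alpha<1$, the series needs $4\alpha<2a$); the operator $R_\alpha$ maps $\L^q(0,T;\L^p(\1))$ into $\C([0,T];\L^p(\1))$ when $\alpha>1/q$; and since Gaussianity lets $q$ be arbitrarily large, the two constraints are compatible for every $a>0$. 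Likewise the frequency splits give H\"older exponents of order $\min\{2a,1\}$ in space and $\min\{a,1\}$ in time, both positive, so Kolmogorov's criterion on the three-dimensional parameter set applies for $q$ large. Two small glosses are worth tightening but do not affect correctness: for $q>p$, passing from the pointwise bound $\sup_{s,\x}\E|Y_\alpha(s,\x)|^q\le C_q$ to $\E\big[\|Y_\alpha(s)\|_{\L^p}^q\big]\le C$ uses Minkowski's integral inequality (or simply the embedding $\L^q(\1)\subset\L^p(\1)$ on the finite-measure torus), not bare Fubini; and the continuity in time of $t\mapsto R_\alpha g(t)$ requires the strong continuity of $\S(\cdot)$ on $\L^p(\1)$ together with a dominated-convergence argument, not only the H\"older estimate that gives boundedness.
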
 Let us  set $\beta=\xi-\zeta$. Since the additive noise is independent of the unknown, we can see that $\beta$ satisfies the following equation:
\begin{align}\label{L5}
\frac{\partial}{\partial t}\beta(t)=\Delta\beta(t)-\nabla\cdot \q(\beta(t)+\zeta(t)), \ t\in[0,T],
\end{align}whose mild solution is given by
\begin{align}\label{L6}\nonumber
\beta(t,\x)&=\int_{\1}G(t,\x,\y)\xi_0(\y)\d \y\\&\quad +\int_0^t\int_{\1}\nabla_{\y}G(t-s,\x,\y)\cdot \q((\beta+\zeta)(s,\cdot))(\y)\d \y\d s,
\end{align}
where $\q$ is defined in \eqref{S1}. Let us recall the following result from \cite{BFMZ}.

\begin{lemma}[{\cite[Lemma 11]{BFMZ}}]\label{lemUF}
Let us assume that $\xi_0\in\L^p(\1)$, for $p>2$, $a>0$ in \eqref{2.21}. Then, for a constant $C_p>0$, we have
\begin{align*}
	\sup_{t\in[0,T]}\|\beta(t)\|_{\L^p}^p \leq \big(\|\xi_0\|_{\L^p}^p+C_1(\zeta)\big)e^{C_2(\zeta)},
\end{align*}where the constants $$C_1(\zeta)=C_pT\sup\limits_{t\in[0,T]}\|\zeta(t)\|_{\L^p}^{2p}\ \text{ and }\  \C_2(\zeta)=C_pT\bigg(1+\sup\limits_{t\in[0,T]}\|\zeta(t)\|_{\L^p}^2\bigg).$$
\end{lemma}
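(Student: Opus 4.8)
The plan is to work directly with the evolution equation \eqref{L5} rather than its mild form \eqref{L6}, since the decisive cancellation of the leading quadratic nonlinearity only becomes visible after testing the equation against $|\beta|^{p-2}\beta$; in the mild formulation the genuinely quadratic term $\beta(\k*\beta)$ survives and a Volterra--Gr\"onwall argument would only yield a local-in-time bound. Setting $\u=\k*(\beta+\zeta)$, which is divergence free by Lemma \ref{lem2.2}, the nonlinearity in \eqref{L5} reads $\q(\beta+\zeta)=(\beta+\zeta)\u$. Because $\beta$ is a priori only $\L^p$-valued, the formal computation of $\frac{\d}{\d t}\|\beta(t)\|_{\L^p}^p$ should first be legitimised through a Galerkin or spatial-mollification scheme, the resulting differential inequality being inherited in the limit.

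Testing \eqref{L5} against $|\beta|^{p-2}\beta$ and integrating over $\1$, the diffusion term produces the dissipation $-\tfrac{4(p-1)}{p^2}\|\nabla|\beta|^{p/2}\|_{\L^2}^2\le0$, while an integration by parts rewrites the nonlinear contribution as $(p-1)\int_{\1}(\beta+\zeta)|\beta|^{p-2}(\u\cdot\nabla\beta)\,\d\x$. The part carrying the factor $\beta$ equals $\tfrac{p-1}{p}\int_{\1}\u\cdot\nabla|\beta|^p\,\d\x=0$ because $\div\u=0$: this cancellation is what removes any quadratic growth driven by $\beta$ itself and is the structural reason the estimate closes on the whole interval $[0,T]$. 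Only the cross term $(p-1)\int_{\1}\zeta|\beta|^{p-2}(\u\cdot\nabla\beta)\,\d\x$ remains.

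The step I expect to be the main obstacle is bounding this cross term so as to recover exactly the stated powers of $\|\zeta\|_{\L^p}$. I would write $|\beta|^{p-2}\nabla\beta=\tfrac{2}{p}|\beta|^{p/2-1}\nabla|\beta|^{p/2}$, extract $\|\u\|_{\LL_\sigma^\infty}\le C_p\|\beta+\zeta\|_{\L^p}$ from \eqref{2.16}, and apply Cauchy--Schwarz together with H\"older's inequality (exponents $p/2$ and $p/(p-2)$) to control $\||\zeta|\,|\beta|^{p/2-1}\|_{\L^2}$ by $\|\zeta\|_{\L^p}\|\beta\|_{\L^p}^{(p-2)/2}$. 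A Young inequality then absorbs $\|\nabla|\beta|^{p/2}\|_{\L^2}^2$ into the dissipation; expanding $\|\beta+\zeta\|_{\L^p}^2\le2\|\beta\|_{\L^p}^2+2\|\zeta\|_{\L^p}^2$ and using one more Young inequality on the term $\|\zeta\|_{\L^p}^4\|\beta\|_{\L^p}^{p-2}$ leads, after multiplying through by $p$, to
\begin{align*}
	\frac{\d}{\d t}\|\beta(t)\|_{\L^p}^p\le C_p\big(1+\|\zeta(t)\|_{\L^p}^2\big)\|\beta(t)\|_{\L^p}^p+C_p\|\zeta(t)\|_{\L^p}^{2p}.
\end{align*}

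Finally I would replace $\|\zeta(t)\|_{\L^p}$ by $\sup_{t\in[0,T]}\|\zeta(t)\|_{\L^p}$ in both coefficients and apply the linear Gr\"onwall inequality to $y(t):=\|\beta(t)\|_{\L^p}^p$ with $y(0)=\|\xi_0\|_{\L^p}^p$. Writing $a:=C_p(1+\sup_{t}\|\zeta(t)\|_{\L^p}^2)$ and $b:=C_p\sup_{t}\|\zeta(t)\|_{\L^p}^{2p}$, Gr\"onwall gives $y(t)\le(y(0)+bt)e^{at}\le(\|\xi_0\|_{\L^p}^p+bT)e^{aT}$, and since $bT=C_1(\zeta)$ and $aT=C_2(\zeta)$ this is exactly the claimed bound, uniform in $t\in[0,T]$.
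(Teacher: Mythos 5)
Your proof is correct: the $\L^p$-energy computation, the cancellation of the $\beta$-part of the nonlinearity via $\div\u=0$, the absorption of the cross term into the dissipation by Young's inequality, and the final Gr\"onwall step reproduce exactly the claimed constants $C_1(\zeta)$ and $C_2(\zeta)$. The paper itself does not reprove this lemma (it quotes it from \cite[Lemma 11]{BFMZ}), but your argument is precisely the method of that reference and coincides with the technique the paper uses in its own proof of the adjacent Lemma \ref{lem4.8} (testing against $|\beta_\zeta|^{p-2}\beta_\zeta$, the trilinear cancellation, \eqref{2.16}, H\"older--Young, Gr\"onwall), so this is essentially the same approach.
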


Next, we prove the local Lipschitz continuity of solutions of \eqref{L5} with respect to $\zeta\in\C([0,T];\L^p(\1)),$ for $p>2$.
\begin{lemma}\label{lem4.8}
For any $T>0$, $R_1>0$ and $ R_2>0$, there exists a positive constant $\wi{C}=\big(C_p(C_1+R_2^p)Te^{C_pT(1+C_1^{2/p}+R_2^2)} \big)^\frac{1}{p}$ such that the solution of \eqref{L5} satisfies 
\begin{align}\label{L7}
	\|\beta_{\zeta_1}-\beta_{\zeta_2}\|_{\C([0,T];\L^p(\1))} \leq \wi{C} \|\zeta_1-\zeta_2\|_{\C([0,T];\L^p(\1)},
\end{align}
for all $\xi_0\in\L^p(\1)$, for $p>2$ with $\|\xi_0\|_{\L^p}\leq R_1$ and $\zeta_1,\zeta_2\in\C([0,T];\L^p(\1))$ with \\ $\|\zeta_1\|_{\C([0,T];\L^p(\1))}\leq R_2$ and $\|\zeta_2\|_{\C([0,T];\L^p(\1))}\leq R_2$.
\end{lemma}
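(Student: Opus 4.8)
The plan is to exploit that the two solutions share the same initial datum $\xi_0$, so that forming the difference annihilates the heat-semigroup term, and then to combine the bilinear structure of $\q$ with the smoothing estimate of Lemma~\ref{lem25} and the uniform a priori bound of Lemma~\ref{lemUF}. Write $\Phi_i := \beta_{\zeta_i}+\zeta_i$ and $w := \beta_{\zeta_1}-\beta_{\zeta_2}$. Subtracting the two copies of the mild formula~\eqref{L6} and recalling the operator $J$ from~\eqref{R1}, the initial term cancels and
\[
w(t,\x)=\big(J[\q(\Phi_1)-\q(\Phi_2)]\big)(t,\x),\qquad (t,\x)\in[0,T]\times\1,
\]
so the problem reduces to estimating the $\L^p$-norm of $J$ applied to the difference of the nonlinearities.

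Next I would linearize $\q$. Since $\q(\xi)=\xi(\k*\xi)$ from~\eqref{S1} is bilinear, adding and subtracting $\Phi_1(\k*\Phi_2)$ gives
\[
\q(\Phi_1)-\q(\Phi_2)=\Phi_1\,\big(\k*(\Phi_1-\Phi_2)\big)+(\Phi_1-\Phi_2)\,\big(\k*\Phi_2\big).
\]
Applying H\"older's inequality together with the bound~\eqref{2.16} for $\|\k*\cdot\|_{\LL_{\sigma}^\infty}$ to each summand yields
\[
\|\q(\Phi_1(s))-\q(\Phi_2(s))\|_{\LL_{\sigma}^p}\leq C_p\big(\|\Phi_1(s)\|_{\L^p}+\|\Phi_2(s)\|_{\L^p}\big)\,\|\Phi_1(s)-\Phi_2(s)\|_{\L^p}.
\]
Here I would control the right-hand side uniformly: since $\|\Phi_i(s)\|_{\L^p}\le\|\beta_{\zeta_i}(s)\|_{\L^p}+\|\zeta_i\|_{\C([0,T];\L^p)}$, Lemma~\ref{lemUF} with $\|\xi_0\|_{\L^p}\le R_1$ and $\|\zeta_i\|_{\C([0,T];\L^p)}\le R_2$ bounds $\sup_s\|\beta_{\zeta_i}(s)\|_{\L^p}$ by a constant depending only on $R_1,R_2,T,p$, whence $\sup_s(\|\Phi_1(s)\|_{\L^p}+\|\Phi_2(s)\|_{\L^p})\le K'$ for such a constant $K'$. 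Finally $\|\Phi_1(s)-\Phi_2(s)\|_{\L^p}=\|w(s)+(\zeta_1-\zeta_2)(s)\|_{\L^p}\le\|w(s)\|_{\L^p}+\|\zeta_1-\zeta_2\|_{\C([0,T];\L^p)}$.

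I would then insert this into the first smoothing estimate in~\eqref{R2} of Lemma~\ref{lem25}(1), choosing the exponents $\beta=1$ and $\alpha=p$ (which satisfy $\tfrac1\beta=1+\tfrac1p-\tfrac1\alpha$ and $\beta\in[1,4/3)$); the kernel exponent becomes $\tfrac1\beta-\tfrac32=-\tfrac12$, an integrable singularity. Writing $D:=\|\zeta_1-\zeta_2\|_{\C([0,T];\L^p)}$ and $M(t):=\sup_{s\le t}\|w(s)\|_{\L^p}$, this produces the scalar inequality
\[
\|w(t)\|_{\L^p}\le C_pK'\int_0^t(t-s)^{-1/2}\big(M(s)+D\big)\,\d s,\qquad t\in[0,T].
\]

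The last step, and the only genuinely delicate point, is to close this into a linear bound $M(T)\le\wi{C}\,D$. Because the kernel $(t-s)^{-1/2}$ is weakly singular, the classical Gr\"onwall lemma does not apply directly; instead I would invoke a singular (Henry-type) Gr\"onwall inequality for fractional integrals. Taking the supremum over $t$, separating the constant contribution (whose integral is $C_pK'\int_0^t(t-s)^{-1/2}\d s\le 2C_pK'\sqrt{T}\,D$) from the term carrying $M(s)$, and iterating, the singular Gr\"onwall lemma yields $M(T)\le\wi{C}\,D$ with a constant of the advertised exponential form $\wi{C}=\big(C_p(C_1+R_2^p)Te^{C_pT(1+C_1^{2/p}+R_2^2)}\big)^{1/p}$; the exponential factor arises precisely from the iterated convolution of the $(t-s)^{-1/2}$ kernel, while the polynomial dependence on $R_1,R_2$ comes from the a priori bound $K'$. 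This establishes~\eqref{L7}. The main obstacle throughout is handling the quadratic nonlinearity uniformly over $\xi_0$ and $\zeta_i$, which is resolved by the uniform estimate of Lemma~\ref{lemUF}, combined with the weakly singular Gr\"onwall closing step.
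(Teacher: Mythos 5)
Your proposal is correct, but it follows a genuinely different route from the paper. The paper works with the \emph{differential} formulation: it writes the equation \eqref{L9} satisfied by the difference $\beta_\zeta=\beta_{\zeta_1}-\beta_{\zeta_2}$, tests it against $|\beta_\zeta|^{p-2}\beta_\zeta$, and handles the quadratic term through the trilinear form \eqref{L12}, whose decomposition \eqref{L13} exploits the divergence-free cancellation $\b(\u,\beta_\zeta,\beta_\zeta)=0$; the resulting terms are absorbed partly into the dissipation $\||\beta_\zeta|^{(p-2)/2}\nabla\beta_\zeta\|_{\L^2}^2$, and the classical Gronwall inequality closes the estimate, producing exactly the constant stated in the lemma. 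You instead stay entirely at the level of the \emph{mild} formulation \eqref{L6}: the same bilinear splitting of $\q$, the bound \eqref{2.16}, and the uniform bound of Lemma \ref{lemUF} reduce everything to a scalar inequality with the weakly singular kernel $(t-s)^{-1/2}$ coming from \eqref{R2} with $\beta=1$, $\alpha=p$ (the same choice the paper makes in the proof of Proposition \ref{prop5.3}), which you then close with a Henry-type singular Gronwall lemma (equivalently, one self-iteration of the inequality turns the kernel into a bounded one, after which classical Gronwall applies). Both arguments rest on Lemma \ref{lemUF} and \eqref{2.16}; what the paper's energy method buys is the avoidance of any singular-kernel machinery and the precise advertised form of $\wi{C}$, while your route buys the convenience of never having to justify the $\L^p$ chain rule for the difference equation, since you only manipulate the integral equation that the solutions actually satisfy. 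The only caveat is cosmetic: your constant comes out as a Mittag-Leffler/exponential expression in $K'^2T\sim T(C_1^{2/p}+R_2^2)$ rather than literally the displayed $\wi{C}$, but it has the same qualitative dependence, and since only the existence of such a locally uniform Lipschitz constant feeds into the uniform contraction principle (Proposition \ref{thrmU2}), this does not affect the lemma or its use in Theorem \ref{thrmUL}.
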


\begin{proof}From Lemma \ref{lemUF}, for every $R_1>0$ and $R_2>0$, there exists $C_1=C_1(R_1,R_2,T)=(R_1^p+C_pTR_2^{2p})e^{C_pT(1+R_2^2)}>0$, such that for all $\xi_0\in\L^p(\1)$ with $\|\xi_0\|_{\L^p}\leq R_1$ and $\zeta\in\C([0,T];\L^p(\1))$ with $\|\zeta\|_{\C([0,T];\L^p(\1))}\leq R_2$, 
\begin{align}\label{L07}
	\sup_{t\in[0,T]}\|\beta_\zeta(t)\|_{\L^p}^p\leq C_1, \ \mathbb{P}\text{-a.s.}
\end{align}

Let $\zeta_1$ and $\zeta_2$ be any two functions in the space $\C([0,T];\L^p(\1))$ such that $\|\zeta_1\|_{\C([0,T];\L^p(\1))}\\ \leq R_2$ and $\|\zeta_2\|_{\C([0,T];\L^p(\1))}\leq R_2$. We denote the corresponding solutions of \eqref{L5} by $\beta_{\zeta_i}$, for $i=1,2$. Then, $\beta_\zeta=\beta_{\zeta_1}-\beta_{\zeta_2}$, where $\zeta=\zeta_1-\zeta_2$, satisfies the following system for $t\in[0,T]$: 
\begin{equation}\label{L9}
	\left\{
	\begin{aligned}
		\frac{\partial}{\partial t}\beta_\zeta(t)&=\Delta\beta_\zeta(t)-\big[\nabla\cdot \q(\beta_{\zeta_1}(t)+\zeta_1(t))-\nabla\cdot \q(\beta_{\zeta_2}(t)+\zeta_2(t))\big], \\
		\beta_\zeta(0)&=0.
	\end{aligned}
	\right.
\end{equation}
Taking inner product with $|\beta_\zeta(\cdot)|^{p-2}\beta_\zeta(\cdot)$ to the first equation of the above system, we find
\begin{align}\label{L11}\nonumber
	&\frac{1}{p}\frac{\d }{\d t}\|\beta_\zeta(t)\|_{\L^p}^p +(p-1)\||\beta_\zeta(t)|^{\frac{p-2}{2}}\nabla\beta_\zeta(t)\|_{\L^2}^2\\&\quad =(p-1)\big(|\beta_\zeta(t)|^{p-2}\nabla \beta_\zeta(t), \q(\beta_{\zeta_1}(t)+\zeta_1(t))- \q(\beta_{\zeta_2}(t)+\zeta_2(t))\big),
\end{align}
for a.e. $t\in[0,T]$. 
Let us take   \begin{align}\label{L12}
	\b(\beta_{\zeta_1}+\zeta_1,\beta_{\zeta},\beta_{\zeta_1}+\zeta_1):=\big(|\beta_{\zeta}|^{p-2}\k*(\beta_{\zeta_1}+\zeta_1)\cdot\nabla\beta_{\zeta},\beta_{\zeta_1}+\zeta_1\big).
\end{align}
Then,  by fixing $|\beta_{\zeta}|^{p-2}$, we have 
\begin{align}\label{L13}\nonumber
	&	\b(\beta_{\zeta_1}+\zeta_1,\beta_{\zeta},\beta_{\zeta_1}+\zeta_1)-\b(\beta_{\zeta_2}+\zeta_2,\beta_{\zeta},\beta_{\zeta_2}+\zeta_2)\\&\nonumber=\b(\beta_{\zeta_1}+\zeta_1,\beta_{\zeta},\beta_{\zeta_1}+\zeta_1-(\beta_{\zeta_2}+\zeta_2))\\&\quad\nonumber+b((\beta_{\zeta_1}+\zeta_1)-(\beta_{\zeta_2}+\zeta_2),\beta_{\zeta},\beta_{\zeta_2}+\zeta_2)
	\\&\nonumber= 
	\b(\beta_{\zeta_1}+\zeta_1,\beta_{\zeta},\beta_{\zeta})+\b(\beta_{\zeta_1}+\zeta_1,\beta_{\zeta},\zeta)+\b(\beta_{\zeta}+\zeta,\beta_{\zeta},\beta_{\zeta_2}+\zeta_2)\\&\nonumber=
	\b(\beta_{\zeta_1}+\zeta_1,\beta_{\zeta},\zeta)+\b(\beta_{\zeta},\beta_{\zeta},\beta_{\zeta_2}+\zeta_2)+\b(\zeta,\beta_{\zeta},\beta_{\zeta_2}+\zeta_2)\\& =
	\big(|\beta_{\zeta}|^{p-2}\k*(\beta_{\zeta_1}+\zeta_1)\cdot\nabla\beta_\zeta,\zeta\big)+\big(|\beta_{\zeta}|^{p-2}\k*\beta_\zeta\cdot\nabla \beta_\zeta,\beta_{\zeta_2}+\zeta_2\big)\nonumber\\&\quad+\big(|\beta_{\zeta}|^{p-2}\k*\zeta\cdot\nabla\beta_\zeta,\beta_{\zeta_2}+\zeta_2\big),
\end{align}where we use the fact that $\b(\u,\beta_{\zeta},\beta_{\zeta})=\int_{\T^2}(\u\cdot\nabla\beta_{\zeta})|\beta_{\zeta}|^{p-2}\beta_{\zeta}\d x=0$, provided $\nabla\cdot\u=0$ (cf. \cite[Lemma 2.2]{HBBF}). Using \eqref{L12} and \eqref{L13} in \eqref{L11}, we obtain for a.e. $t\in[0,T]$
\begin{align}\label{L140}\nonumber
	&\frac{\d }{\d t}\|\beta_\zeta(t)\|_{\L^p}^p +p(p-1)\||\beta_\zeta(t)|^{\frac{p-2}{2}}\nabla\beta_\zeta(t)\|_{\L^2}^2\\&\nonumber =p(p-1)\big\{\big(\k*(\beta_{\zeta_1}(t)+\zeta_1(t))\cdot\nabla\beta_\zeta(t)|\beta_\zeta(t)|^{p-2},\zeta(t)\big)\\&\nonumber\quad+\big(\k*\beta_\zeta(t)\cdot\nabla \beta_\zeta|\beta_\zeta(t)|^{p-2},\beta_{\zeta_2}(t) +\zeta_2(t)\big)\\&\nonumber\quad+\big(\k*\zeta(t)\cdot\nabla\beta_\zeta(t)(\zeta)|\beta_\zeta(t)|^{p-2},\beta_{\zeta_2(t)}+\zeta_2(t)\big)\big\}\\&= I_1+I_2+I_3.
\end{align}Let us consider the term $I_1$ in the right hand side of the above equality, and estimate it using \eqref{2.16},  H\"older's and Young's inequalities as
\begin{align}\label{L141}\nonumber
	|I_1|& \leq C_p\||\beta_\zeta|^{\frac{p-2}{2}}\nabla\beta_\zeta\|_{\L^2}\||\beta_\zeta|^{\frac{p-2}{2}}\zeta\|_{\L^2}\|\k*(\beta_{\zeta_1}+\zeta_1)\|_{\mathbb{L}_\sigma^{\infty}}\\&\nonumber\leq \frac{1}{4}\||\beta_\zeta|^{\frac{p-2}{2}}\nabla\beta_\zeta\|_{\L^2}^2+C_p\|\beta_\zeta\|_{\L^p}^{p-2}\|\zeta\|_{\L^p}^2\|\beta_{\zeta_1}+\zeta_1\|_{\L^p}^2
	\\&\leq \frac{1}{4}\||\beta_\zeta|^{\frac{p-2}{2}}\nabla\beta_\zeta\|_{\L^2}^2+C_p\|\beta_\zeta\|_{\L^p}^p+C_p\|\beta_{\zeta_1}+\zeta_1\|_{\L^p}^{p}\|\zeta\|_{\L^p}^p,
\end{align}for $p>2$. Similarly, we can estimate the terms $I_2$ and $I_3$ from \eqref{L140} as 
\begin{align}\label{L142}
	|I_2| &\leq \frac{1}{4}\||\beta_\zeta|^{\frac{p-2}{2}}\nabla\beta_\zeta\|_{\L^2}^2+C_p\|\beta_{\zeta_2}+\zeta_2\|_{\L^p}^{2}\|\beta_\zeta\|_{\L^p}^p,\\ \label{L143}
	|I_3| &\leq \frac{1}{4}\||\beta_\zeta|^{\frac{p-2}{2}}\nabla\beta_\zeta\|_{\L^2}^2+C_p\|\beta_\zeta\|_{\L^p}^p+C_p\|\beta_{\zeta_2}+\zeta_2\|_{\L^p}^{p}\|\zeta\|_{\L^p}^p, 
\end{align}for $p>2$.

Combining \eqref{L12}-\eqref{L143}, we obtain 
\begin{align}\label{L14}\nonumber
	&\frac{\d }{\d t}\|\beta_\zeta(t)\|_{\L^p}^p +\frac{p(p-1)}{4}\||\beta_\zeta(t)|^{\frac{p-2}{2}}\nabla\beta_\zeta(t)\|_{\L^2}^2\\&\nonumber \leq C_p\big(1+\|\beta_{\zeta_2}(t)+\zeta_2(t)\|_{\L^p}^2\big)\|\beta_\zeta(t)\|_{\L^p}^p\\&\quad+C_p\big(\|\beta_{\zeta_1}(t)+\zeta_1(t)\|_{\L^p}^{p}+\|\beta_{\zeta_2}(t)+\zeta_2(t)\|_{\L^p}^{p}\big)\|\zeta(t)\|_{\L^p}^{p}.
\end{align}Applying Gronwall's inequality in \eqref{L14} and using the fact that $\|\zeta_1\|_{\C([0,T];\L^p(\1))}\leq R_2$ and $\|\zeta_2\|_{\C([0,T];\L^p(\1))}\leq R_2$, along with \eqref{L07}, we obtain
\begin{align}\label{L15}\nonumber
	&	\|\beta_\zeta(t)\|_{\L^p}^p\\&\nonumber\leq \bigg(C_p\int_0^T\big(\|\beta_{\zeta_1}(t)+\zeta_1(t)\|_{\L^p}^{p}+\|\beta_{\zeta_2}(t)+\zeta_2(t)\|_{\L^p}^{p}\big)\|\zeta(t)\|_{\L^p}^{p}\d t\bigg)\\&\nonumber\qquad\times\exp\bigg\{C_p\int_0^T\big(1+\|\beta_{\zeta_2}(t)+\zeta_2(t)\|_{\L^p}^2\big)\d s\bigg\}
	\\&\nonumber\leq C_pT\sup_{t\in[0,T]}\big\{\big(\|\beta_{\zeta_1}(t)\|_{\L^p}^p+\|\zeta_1(t)\|_{\L^p}^{p}+\|\beta_{\zeta_2}(t)\|_{\L^p}^p+\|\zeta_2(t)\|_{\L^p}^{p}\big)\|\zeta(t)\|_{\L^p}^{p}\big\}\\&\nonumber\qquad\times
	\exp{\bigg\{C_pT\bigg(1+\sup\limits_{t\in[0,T]}\big\{\|\beta_{\zeta_2}(t)\|_{\L^p}^2+\|\zeta_2(t)\|_{\L^p}^2\big\}\bigg)\bigg\}}\\&\leq \big(C_p(C_1+R_2^p)Te^{C_pT(1+C_1^{2/p}+R_2^2)} \big)\sup_{t\in[0,T]}\|\zeta(t)\|_{\L^p}^{p},
\end{align}where the constant $C_1=(R_1^p+C_pTR_2^{2p})e^{C_pT(1+R_2^2)}$. 
\end{proof}
Let us now establish the ULDP for the laws of solutions to the system \eqref{1.2}. For any given $T>0$ and the initial data $\xi_0\in\L^p(\1)$, define a map $\S_{\xi_0}(\cdot)$ from $\C([0,T];\L^p(\1))$ to $\C([0,T];\L^p(\1))$ such that 
\begin{align}\label{L015}
\S_{\xi_0}(\zeta)= \beta_\zeta, \ \ \text{ for all } \ \ \zeta \in\C([0,T];\L^p(\1)), \text{ for } p>2,
\end{align}where $\beta_\zeta$ is the solution to the system \eqref{L5}. For given $\phi\in\C([0,T];\L^p(\1))$ with $p>2$, we define 
\begin{align}\label{L016}
\J_{\xi_0}(\phi):=\inf\big\{\I(\psi): \ \psi\in\C([0,T];\L^p(\1)), \ \psi+\S_{\xi_0}(\psi)=\phi, \ \phi(0)=\xi_0\big\},
\end{align}where $\I(\cdot)$ is the rate function defined in \eqref{U5}.

Let us move to the main result for this subsection, that is, ULDP for the system \eqref{1.2} in $\C([0,T];\L^p(\1))$, for $p>2$. 
\begin{theorem}\label{thrmUL}
For any given $T>0$ and the initial data $\xi_0\in\L^p(\1)$, for $p>2$. Let $\xi^\e$ be the solution of \eqref{1.2}, and $\mu^\e_{\xi_0}$ be the law of $\xi^\e$ in the space $\C([0,T];\L^p(\1))$, for $p>2$. Then, the family of measures $\{\mu^\e_{\xi_0}\}_{\e>0}$ satisfies an LDP in $\C([0,T];\L^p(\1))$ , for $p>2$ with the rate function $\J_{\xi_0}(\cdot)$ defined in \eqref{L016} uniformly with respect to the initial conditions $\xi_0$ belongs to a bounded subsets of $\L^p(\1)$, for $p>2$.
\end{theorem}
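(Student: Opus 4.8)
The strategy is to exhibit $\xi^\e$ as a locally Lipschitz image of the Gaussian convolution $\zeta^\e$ and then quote the uniform contraction principle (Proposition~\ref{thrmU2}). Recall that $\zeta^\e$ solves the linear equation \eqref{U6}, that the decomposition $\beta^\e:=\xi^\e-\zeta^\e$ turns \eqref{1.2} into \eqref{L5}, and that \eqref{L015} defines the solution map $\S_{\xi_0}(\zeta)=\beta_\zeta$ of \eqref{L5}. I would introduce the full solution map $\mathscr{S}_{\xi_0}(\zeta):=\zeta+\S_{\xi_0}(\zeta)$ on $\C([0,T];\L^p(\1))$, so that $\xi^\e=\mathscr{S}_{\xi_0}(\zeta^\e)$ and hence $\mu^\e_{\xi_0}=\nu^\e\circ(\mathscr{S}_{\xi_0})^{-1}$, where $\nu^\e$ is the law of $\zeta^\e$. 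With $\X_1=\X_2=\C([0,T];\L^p(\1))$, the theorem reduces to verifying the two hypotheses of Proposition~\ref{thrmU2}.

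For the first hypothesis, $\{\nu^\e\}_{\e>0}$ obeys an LDP on $\C([0,T];\L^p(\1))$ with the rate function $\I$ of \eqref{U5}; this was already recorded above via \cite[Theorems 12.11 and 12.16]{DaZ}, and $\I$ is a good rate function since its level sets are continuous images of the weakly compact sets $\mathfrak{U}^M$ of \eqref{U2}. I would also check that $\mathscr{S}_{\xi_0}$ genuinely maps $\C([0,T];\L^p(\1))$ into itself: for each input $\zeta$ in this space, Lemma~\ref{lemUF} provides the bound $\sup_{t\in[0,T]}\|\beta_\zeta(t)\|_{\L^p}^p\le(\|\xi_0\|_{\L^p}^p+C_1(\zeta))e^{C_2(\zeta)}<\infty$, which promotes the local fixed-point solution of \eqref{L5} to a global one.

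For the second hypothesis, fix $R_1>0$ and take $\mathscr{T}=\{\xi_0\in\L^p(\1):\|\xi_0\|_{\L^p}\le R_1\}$ as the index set. For $R_2>0$ and $\zeta_1,\zeta_2$ with $\|\zeta_i\|_{\C([0,T];\L^p(\1))}\le R_2$, Lemma~\ref{lem4.8} gives $\|\S_{\xi_0}(\zeta_1)-\S_{\xi_0}(\zeta_2)\|_{\C([0,T];\L^p(\1))}\le\wi{C}\|\zeta_1-\zeta_2\|_{\C([0,T];\L^p(\1))}$ with $\wi{C}=\wi{C}(R_1,R_2,T,p)$ \emph{independent of the particular $\xi_0\in\mathscr{T}$}; adding the $1$-Lipschitz identity yields $\|\mathscr{S}_{\xi_0}(\zeta_1)-\mathscr{S}_{\xi_0}(\zeta_2)\|_{\C([0,T];\L^p(\1))}\le(1+\wi{C})\|\zeta_1-\zeta_2\|_{\C([0,T];\L^p(\1))}$, so $\mathscr{S}_{\xi_0}$ is locally Lipschitz with a constant uniform over $\mathscr{T}$, exactly as Proposition~\ref{thrmU2} demands.

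Proposition~\ref{thrmU2} then delivers the LDP for $\{\mu^\e_{\xi_0}\}_{\e>0}$ on $\C([0,T];\L^p(\1))$ uniformly over $\mathscr{T}$, with rate function $\inf\{\I(\zeta):\mathscr{S}_{\xi_0}(\zeta)=\phi\}$; since $\mathscr{S}_{\xi_0}(\zeta)=\phi$ reads $\zeta+\S_{\xi_0}(\zeta)=\phi$, and since $\zeta(0)=0$, $\beta_\zeta(0)=\xi_0$ force $\phi(0)=\xi_0$, this coincides with $\J_{\xi_0}$ of \eqref{L016}. Because $R_1>0$ is arbitrary, the uniformity extends to every bounded subset of $\L^p(\1)$, which is the asserted ULDP. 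The genuine analytic work sits in Lemma~\ref{lem4.8}; the one delicate point worth underlining is that its constant depends on the initial datum only through the radius $R_1$, and it is precisely this $\xi_0$-independence of the Lipschitz constant over the (non-compact) bounded family of initial data that converts the classical contraction principle into its uniform counterpart.
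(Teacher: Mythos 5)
Your proposal is correct and follows essentially the same route as the paper: decompose $\xi^\e=\zeta^\e+\S_{\xi_0}(\zeta^\e)$, invoke the LDP for the Gaussian convolution $\zeta^\e$ from \cite[Theorem 12.16]{DaZ}, use Lemma \ref{lem4.8} for the $\xi_0$-uniform local Lipschitz continuity of $\I+\S_{\xi_0}$, and conclude via the uniform contraction principle (Proposition \ref{thrmU2}). Your extra remarks (well-definedness of the map via Lemma \ref{lemUF}, goodness of the rate function, and the matching of boundary conditions in \eqref{L016}) are sound additions but do not change the argument.
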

\begin{proof}
For the given initial data $\xi_0\in\L^p(\1)$, with $p>2$, let $\S_{\xi_0}(\cdot)$ and $\J_{\xi_0}(\cdot)$ be the mappings defined in \eqref{L015} and \eqref{L016}, respectively. From Lemma \ref{lem4.8}, we infer that $\zeta+\S_{\xi_0}(\zeta)$ is locally Lipschitz continuous for $\zeta\in\C([0,T];\L^p(\1))$, with $p>2$, uniformly over the initial conditions $\xi_0$ lying in a bounded subset in $\L^p(\1)$-norm.

Let $\zeta^\e$ be the solution to the system \eqref{U6} and $\nu^\e$ be the law  of $\zeta^\e$. Then, we have 
\begin{align*}\xi^\e=\beta_{\zeta^\e}^\e+\zeta^\e=(\I+\S_{\xi_0})(\zeta^\e).\end{align*}
Since, $\mu^\e_{\xi_0}$ is the law of $\xi^\e$, we have $\mu^\e_{\xi_0}=\nu^\e\circ(\I+\S_{\xi_0})^{-1}$. From \cite[Theorem 12.16]{DaZ}, we know that $\{\nu^\e\}_{\e>0}$ satisfies the LDP on the space $\C([0,T];\L^p(\1))$, for $p>2$, with the rate function $\I(\cdot)$ defined in \eqref{U5}. By the uniform contraction principle (Proposition \ref{thrmU2}), we obtain that the family of measures $\{\mu^\e_{\xi_0}\}_{\e>0}$ satisfies the LDP on the space $\C([0,T];\L^p(\1))$, for $p>2$ uniformly over the initial conditions $\xi_0$ lying in bounded subsets in $\L^p(\1)$-norm, with the rate function defined by 
\begin{align*}
	\J_{\xi_0}(\phi)&=\inf\big\{\I(\psi): \psi\in(\I+\S_{\xi_0})^{-1}(\phi)\big\}\\&
	=\inf\big\{\I(\psi): \psi\in\C([0,T];\L^p(\1)), \psi+\S_{\xi_0}(\psi)=\phi\big\},
\end{align*}and hence the proof completes.
\end{proof}
Now, our aim is to prove ULDP for the law of the solutions to the system \eqref{1.2} in $\C([0,T]\times\1)$. For that we only need to prove the following result in $\C([0,T]\times\1)$.
\begin{lemma}\label{lem3.9}
For any $T>0, \ R_1>0$ and $R_2>0$, the solution of \eqref{L5} satisfies:
\begin{align}\label{326}
	\|\beta_{\zeta_1}-\beta_{\zeta_2}\|_{\C([0,T]\times\1)} \leq C\left(T,R_1,R_2\right)\|\zeta_1-\zeta_2\|_{\C([0,T]\times\1)},
\end{align}for all $\xi\in\C(\1)$, with $\|\xi_0\|_{\C(\1)}\leq R_1$ and $\zeta_1,\zeta_2\in\C([0,T]\times\1)$ with \begin{align}\label{327}
	\|\zeta_1\|_{\C([0,T]\times\1)}\leq R_2\ \text{ and }\ \|\zeta_2\|_{\C([0,T]\times\1)}\leq R_2.
\end{align}
\end{lemma}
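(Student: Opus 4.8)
The plan is to pass to the mild formulation and exploit the parabolic smoothing of the operator $J$ from \eqref{R1}, rather than attempting an energy estimate as in Lemma \ref{lem4.8}; sup-norm bounds do not arise naturally from testing against $|\beta_\zeta|^{p-2}\beta_\zeta$. Writing $f_i:=\beta_{\zeta_i}+\zeta_i$ for $i=1,2$ and subtracting the two copies of \eqref{L6} (the initial-data terms coincide and cancel, since both solutions share the datum $\xi_0$), I would record the identity
\begin{align*}
	(\beta_{\zeta_1}-\beta_{\zeta_2})(t,\x)=\bigl(J[\q(f_1)-\q(f_2)]\bigr)(t,\x), \qquad (t,\x)\in[0,T]\times\1.
\end{align*}
Fixing an auxiliary exponent $p>4$ and $\gamma>\frac{2p}{p-2}$, the estimate \eqref{R3} of Lemma \ref{lem25}(2) (applicable once the integrand below is seen to be uniformly bounded in $s$, hence in $\L^\infty(0,T;\LL_\sigma^p)\subset\L^\gamma(0,T;\LL_\sigma^p)$) gives
\begin{align*}
	\|\beta_{\zeta_1}-\beta_{\zeta_2}\|_{\C([0,T]\times\1)}\leq C_{p,T}\bigg(\int_0^T\|\q(f_1)(s)-\q(f_2)(s)\|_{\LL_\sigma^p}^\gamma\,\d s\bigg)^{1/\gamma},
\end{align*}
so the whole problem reduces to a pointwise-in-time bound on the nonlinearity difference.

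For the latter I would use the bilinear splitting $\q(f_1)-\q(f_2)=f_1\,(\k*(f_1-f_2))+(f_1-f_2)\,(\k*f_2)$ together with H\"older's inequality and the Biot--Savart bound \eqref{2.16}, namely $\|\k*g\|_{\LL_\sigma^\infty}\le C_p\|g\|_{\L^p}$, to obtain
\begin{align*}
	\|\q(f_1)(s)-\q(f_2)(s)\|_{\LL_\sigma^p}\leq C_p\bigl(\|f_1(s)\|_{\L^p}+\|f_2(s)\|_{\L^p}\bigr)\|f_1(s)-f_2(s)\|_{\L^p}.
\end{align*}
The prefactors satisfy $\|f_i(s)\|_{\L^p}\le\|\beta_{\zeta_i}(s)\|_{\L^p}+\|\zeta_i(s)\|_{\L^p}$, which are controlled uniformly in $s$ by the a priori bound of Lemma \ref{lemUF} and by $\|\zeta_i(s)\|_{\L^p}\le C\|\zeta_i\|_{\C([0,T]\times\1)}\le CR_2$, after noting that $\|\xi_0\|_{\L^p}\le CR_1$ and $\|\zeta_i\|_{\C([0,T];\L^p(\1))}\le CR_2$ via the embeddings $\C(\1)\hookrightarrow\L^p(\1)$ and $\C([0,T]\times\1)\hookrightarrow\C([0,T];\L^p(\1))$; thus these factors stay below a constant $C(T,R_1,R_2)$.

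The one genuinely delicate point is the factor $\|f_1(s)-f_2(s)\|_{\L^p}\le\|\beta_{\zeta_1}(s)-\beta_{\zeta_2}(s)\|_{\L^p}+\|\zeta_1(s)-\zeta_2(s)\|_{\L^p}$: the first summand is exactly the $\L^p$-norm of the quantity we are trying to bound, so feeding it naively back into the estimate is circular, and since $T$ is fixed one cannot break it by a smallness/absorption argument. The resolution, which I regard as the crux of the proof, is to invoke the \emph{already established} $\L^p$-Lipschitz estimate \eqref{L7} of Lemma \ref{lem4.8} (whose hypotheses hold with $R_1,R_2$ replaced by $CR_1,CR_2$ through the same embeddings), yielding $\sup_{s\in[0,T]}\|\beta_{\zeta_1}(s)-\beta_{\zeta_2}(s)\|_{\L^p}\le\wi{C}\,\|\zeta_1-\zeta_2\|_{\C([0,T];\L^p(\1))}\le C\,\|\zeta_1-\zeta_2\|_{\C([0,T]\times\1)}$. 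Combining this with $\|\zeta_1(s)-\zeta_2(s)\|_{\L^p}\le C\|\zeta_1-\zeta_2\|_{\C([0,T]\times\1)}$ bounds $\|f_1(s)-f_2(s)\|_{\L^p}$ by $C\|\zeta_1-\zeta_2\|_{\C([0,T]\times\1)}$ uniformly in $s$. Substituting back into the two displays above, the $s$-integral contributes only a harmless factor $T^{1/\gamma}$, and I arrive at \eqref{326} with a constant of the required form $C(T,R_1,R_2)=C_{p,T}\,T^{1/\gamma}\,C(T,R_1,R_2)$. The principal obstacle is therefore conceptual rather than computational: recognizing that the apparent self-reference in the sup-norm estimate must be severed by the lower-topology ($\L^p$) Lipschitz bound, not by absorption.
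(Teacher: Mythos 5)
Your proof is correct, but it follows a genuinely different route from the paper's. The paper also subtracts the two copies of \eqref{L6} and uses the bilinear splitting of $\q(\beta_{\zeta_1}+\zeta_1)-\q(\beta_{\zeta_2}+\zeta_2)$, but it then stays entirely in the sup-norm topology: it bounds the Biot--Savart factors in $\L^\infty$, applies the gradient heat-kernel estimate \eqref{2.9} (producing the $T^{1/2}$ factors), splits the integrand into a part proportional to $\sup_{\x}|\beta_{\zeta_1}(s,\x)-\beta_{\zeta_2}(s,\x)|$ and a part proportional to $\|\zeta_1-\zeta_2\|_{\C([0,T]\times\1)}$, and closes with a Gronwall argument in time whose coefficients are controlled by the a priori bound \eqref{L071}; the conclusion is \eqref{328} with the explicit constant $CT^{1/2}(\vi{C}+R_2)e^{2T^{1/2}(\vi{C}+R_2)}$. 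So the paper severs the self-reference you identify by Gronwall in $\C([0,T]\times\1)$ itself and never invokes Lemma \ref{lem4.8}, whereas you sever it by dropping to the $\L^p$ topology (where Lemma \ref{lem4.8} already supplies the Lipschitz bound) and lifting back to the sup norm through the smoothing estimate \eqref{R3} of Lemma \ref{lem25}(2) with $p>4$, $\gamma>\frac{2p}{p-2}$. Both are valid. Your route buys two things: it avoids Gronwall with the weakly singular kernel $(t-s)^{-1/2}$ (which, strictly speaking, needs a generalized Henry-type Gronwall lemma that the paper applies without comment), and it mirrors the paper's own mechanism for proving space-time continuity in Theorem \ref{thrmex}, where $J\q(\xi)\in\C([0,T]\times\1)$ is deduced from Lemma \ref{lem25}(2). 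The paper's route buys self-containedness: it needs only the sup-norm a priori bound \eqref{L071} rather than the full $\L^p$ Lipschitz estimate, and it yields a completely explicit constant. Two presentational caveats on your side: the membership of $\q(f_1)-\q(f_2)$ in $\LL_\sigma^p$ required by \eqref{R3} deserves a sentence (though it is used exactly as the paper itself uses Lemma \ref{lem25} for $\q$-differences, e.g.\ in \eqref{FD16} and in the proof of Theorem \ref{thrmex}), and your final formula for the constant is written self-referentially, though the intended meaning is clear.
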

\begin{proof}
From \cite[Theorem 1]{BFMZ}, we know that $\xi\in\C([0,T]\times\1)$, whenever the initial data $\xi_0\in\C(\1)$ and \cite[Lemma 7]{BFMZ}  implies that $\zeta$  has a continuous modification on $[0,T]\times{\1}$, and hence $\beta\in \C([0,T]\times\1)$. Therefore, there exists a constant $\vi{C}>0$ such that
\begin{align}\label{L071}
	\|\beta_\zeta\|_{\C([0,T]\times\1)}\leq \vi{C} (R_1,R_2,T), \ \mathbb{P}\text{-a.s.}
\end{align}

Let $\zeta_1$ and $\zeta_2$ be any two functions in the space $\C([0,T]\times\1)$ such that \eqref{327} be satisfied. We denote the corresponding solutions of \eqref{L6} by $\beta_{\zeta_i}$, for $i=1,2$. Then, for $\beta_\zeta=\beta_{\zeta_1}-\beta_{\zeta_2}$, where $\zeta=\zeta_1-\zeta_2$, we have
\begin{align*}	&	|\beta_{\zeta_1}(t,x)-\beta_{\zeta_2}(t,x)|\\&\nonumber=\bigg|\int_0^t\int_{\1}\nabla_yG(t-s,x,y)\cdot\big(\q((\beta_{\zeta_1}+\zeta_1)(s,\cdot))(y)-\q((\beta_{\zeta_2}+\zeta_2)(s,\cdot))(y)\big)\d y\d s\bigg|
	\\&\nonumber\leq \bigg|\int_0^t\int_{\1}\nabla_yG(t-s,x,y)\cdot (\k*(\beta_{\zeta_1}+\zeta_1)(s,y))(\beta_{\zeta}+\zeta)(s,y)\d y\d s\bigg| 
	\\&\nonumber\quad +\bigg|\int_0^t\int_{\1}\nabla_yG(t-s,x,y)\cdot(\k*(\beta_{\zeta}+\zeta)(s,y))(\beta_{\zeta_2}+\zeta_2)\d y\d s\bigg| 
	\\&\nonumber 
	\leq C
	\int_0^t\int_{\1}\big|\nabla_yG(t-s,x,y)\big|\big| (\beta_{\zeta_1}+\zeta_1)(s,y)\big|\big|(\beta_{\zeta}+\zeta )(s,y)\big|\d y\d s \\&\nonumber\quad +C\int_0^t\int_{\1}\big|\nabla_yG(t-s,x,y)\big|\big|(\beta_{\zeta}+\zeta)(s,y)\big|\big|(\beta_{\zeta_2}+\zeta_2)(s,y)\big|\d y\d s
	\\&\nonumber 	\leq C	\int_0^t\int_{\1}\big|\nabla_yG(t-s,x,y)\big|\big\{\big| (\beta_{\zeta_1}+\zeta_1)(s,y)\big|+\big| (\beta_{\zeta_2}+\zeta_2)(s,y)\big|\big\}\big|\beta_{\zeta}(s,y)\big|\d y\d s \\&\quad +C\int_0^t\int_{\1}\big|\nabla_yG(t-s,x,y)\big|\big\{\big|(\beta_{\zeta_1}+\zeta_1)(s,y)\big|+(\beta_{\zeta_2}+\zeta_2)(s,y)\big|\big|\zeta(s,y)\big|\d y\d s.
\end{align*}
Applying Gronwall's inequality in above inequality, and using \eqref{2.9} and \eqref{L071}, we obtain 
\begin{align}\label{328}\nonumber 
	&\sup_{(t,x)\in[0,T]\times\1}	|\beta_{\zeta_1}(t,x)-\beta_{\zeta_2}(t,x)|\\& \nonumber \leq 
	C\left(T^\frac{1}{2}\big\{ \|\beta_{\zeta_1}+\zeta_1\|_{\C([0,T]\times\1)}+\|\beta_{\zeta_2}+\zeta_2\|_{\C([0,T]\times\1)}\big\}\|\zeta\|_{\C([0,T]\times\1)}\right)\\&\nonumber \qquad \times \exp\left\{T^{\frac{1}{2}}\big\{ \|\beta_{\zeta_1}+\zeta_1\|_{\C([0,T]\times\1)}+\|\beta_{\zeta_2}+\zeta_2\|_{\C([0,T]\times\1)}\big\}\right\}
	\\&	\leq C\left(T^\frac{1}{2}\big(\vi{C}+R_2\big)\|\zeta\|_{\C([0,T]\times\1)}\right) \times e^{2T^{\frac{1}{2}}(\vi{C}+R_2)},
\end{align}
and \eqref{326} follows. 
\end{proof}
Let us now establish the ULDP for the laws of solutions to the system \eqref{1.2}. For any given $T>0$ and the initial data $\xi_0\in\C(\1)$, define a map $\S_{\xi_0}(\cdot)$ from $\C([0,T]\times\1)$ to $\C([0,T]\times\1)$ such that 
\begin{align}\label{L0151}
\S_{\xi_0}(\zeta)= \beta_\zeta, \ \ \text{ for all } \ \ \zeta \in\C([0,T]\times\1),
\end{align}where $\beta_\zeta$ is the solution to the system \eqref{L6}. For given $\phi\in\C([0,T]\times\1)$, we define 
\begin{align}\label{L0161}
\J_{\xi_0}(\phi):=\inf\big\{\I(\psi): \ \psi\in\C([0,T]\times\1), \ \psi+\S_{\xi_0}(\psi)=\phi, \ \phi(0)=\xi_0\big\},
\end{align}where $\I(\cdot)$ is the rate function defined in \eqref{U5}.
\begin{theorem}\label{thrmUL1}
For any given $T>0$ and the initial data $\xi_0\in\C(\1)$. Let $\xi^\e$ be the solution of \eqref{1.2}, and $\mu^\e_{\xi_0}$ be the law of $\xi^\e$ in the space $\C([0,T]\times\1)$. Then, the family of measures $\{\mu^\e_{\xi_0}\}_{\e>0}$ satisfies an LDP in $\C([0,T]\times\1)$ with the rate function $\J_{\xi_0}(\cdot)$ defined in \eqref{L0161} uniformly with respect to the initial conditions $\xi_0$ belongs to bounded subsets  of $\C(\1)$.
\end{theorem}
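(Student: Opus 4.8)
The plan is to follow verbatim the architecture of the proof of Theorem \ref{thrmUL}, replacing the topology $\C([0,T];\L^p(\1))$ by $\C([0,T]\times\1)$ throughout and invoking the uniform contraction principle (Proposition \ref{thrmU2}) with $\mathrm{X}_1 = \mathrm{X}_2 = \C([0,T]\times\1)$. Three ingredients are needed: (i) an LDP for the laws $\{\nu^\e\}_{\e>0}$ of the Gaussian stochastic convolution $\zeta^\e$ solving \eqref{U6}, now in the finer space $\C([0,T]\times\1)$; (ii) the local Lipschitz continuity of the solution map $\S_{\xi_0}$ of \eqref{L0151}, uniformly over initial data in bounded subsets of $\C(\1)$; and (iii) the decomposition $\xi^\e = \zeta^\e + \S_{\xi_0}(\zeta^\e) = (\I + \S_{\xi_0})(\zeta^\e)$, which identifies $\mu^\e_{\xi_0}$ as the pushforward $\nu^\e \circ (\I + \S_{\xi_0})^{-1}$.

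Ingredient (ii) is exactly Lemma \ref{lem3.9}, which gives $\|\beta_{\zeta_1} - \beta_{\zeta_2}\|_{\C([0,T]\times\1)} \leq C(T,R_1,R_2)\|\zeta_1 - \zeta_2\|_{\C([0,T]\times\1)}$ for $\zeta_1,\zeta_2$ in the ball of radius $R_2$ in $\C([0,T]\times\1)$ and $\xi_0$ in the ball of radius $R_1$ in $\C(\1)$. Since the identity map is $1$-Lipschitz, $\zeta \mapsto (\I + \S_{\xi_0})(\zeta)$ is locally Lipschitz continuous on $\C([0,T]\times\1)$, with a constant on each ball depending only on $R_1,R_2,T$; this is precisely the uniformity over $\xi_0$ demanded by Proposition \ref{thrmU2}, with $\mathscr{T}$ taken to be the bounded subsets of $\C(\1)$.

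For ingredient (i), I would rely on the observation already recorded after \eqref{L3} and the reasoning of \cite[Theorems 12.11 and 12.16]{DaZ}, combined with the regularizing assumption $a>0$ in \eqref{2.21}. The point is that the heat-kernel bounds of Lemma \ref{thrm2.1} together with the convolution estimate of Lemma \ref{lem25}(2) ensure $\zeta^\e \in \C([0,T]\times\1)$ almost surely and let one transfer the Gaussian LDP to this space-time continuous topology. I expect this to be the genuine obstacle: establishing the LDP directly at the level of $\C([0,T]\times\1)$ (as opposed to $\C([0,T];\L^p(\1))$) requires controlling the joint modulus of continuity in $t$ and $\x$ of a Gaussian convolution, which is where the sharp estimates \eqref{2.8}--\eqref{2.11} enter. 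Once $\zeta^\e$ is known to live continuously in space-time, its good rate function is again $\I(\cdot)$ of \eqref{U5}, viewed on $\C([0,T]\times\1)$.

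With (i)--(iii) secured, the conclusion follows as in Theorem \ref{thrmUL}. Applying Proposition \ref{thrmU2} with $\mathrm{X}_1 = \mathrm{X}_2 = \C([0,T]\times\1)$ to the locally Lipschitz family $\{\I + \S_{\xi_0}\}_{\xi_0}$ and the Gaussian family $\{\nu^\e\}_{\e>0}$ shows that $\{\mu^\e_{\xi_0}\}_{\e>0}$ satisfies the LDP on $\C([0,T]\times\1)$, uniformly over bounded subsets of $\C(\1)$, with rate function
\[
\J_{\xi_0}(\phi) = \inf\big\{\I(\psi) : \psi \in (\I + \S_{\xi_0})^{-1}(\phi)\big\} = \inf\big\{\I(\psi): \psi \in \C([0,T]\times\1),\ \psi + \S_{\xi_0}(\psi) = \phi\big\},
\]
which coincides with $\J_{\xi_0}(\cdot)$ of \eqref{L0161}, completing the argument.
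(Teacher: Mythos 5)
Your proposal is correct and is precisely the argument the paper intends: the paper's own proof of Theorem \ref{thrmUL1} is just the remark that one repeats the proof of Theorem \ref{thrmUL} with \eqref{L0151} and \eqref{L0161} replacing \eqref{L015} and \eqref{L016}, i.e., the decomposition $\xi^\e=(\I+\S_{\xi_0})(\zeta^\e)$, the local Lipschitz bound of Lemma \ref{lem3.9}, the Gaussian LDP in $\C([0,T]\times\1)$ (asserted in the paper after \cite[Theorem 12.16]{DaZ}), and the uniform contraction principle (Proposition \ref{thrmU2}). Your write-up simply makes explicit what the paper leaves implicit, so the two proofs coincide in substance.
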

\begin{proof}
Since the proof of this theorem is based on Theorem	\ref{thrmUL} with \eqref{L0151} and \eqref{L0161} as substitutes for \eqref{L015} and \eqref{L016}, respectively, we are not supplying any additional proof.
\end{proof}

\section{Finite dimensional noise}\label{FD}\setcounter{equation}{0}
In this section, we consider the system \eqref{FD1} and establish the well-posedness results. If $\xi^\e$ is a solution of the integral equation \eqref{FD2} in the sense of Walsh (\cite{JBW}), then by using \cite[Proposition 3.7]{IGCR}, one can prove that $\xi^\e$ has a continuous modification:
\begin{align}\label{FD02}\nonumber
\int_{\1}\xi^\e(t,\y)\phi(\y)\d \y&=\int_{\1}\xi_0(\y)\phi(\y)\d \y+
\int_0^t\int_{\1} \xi^\e(t,\y)\Delta\phi(\y)\d \y\d s
\\&\nonumber\quad-\int_0^t\int_{\1}\q(\xi^\e(s,\cdot))(\y)\cdot\nabla\phi(\y)\d \y\d s\\&\quad+\sqrt{\e}\sum_{j=1}^{n}\int_0^t\int_{\1}\sigma_j(s,\y,\xi^\e(s,\y))\phi(\y)\d \y\d\W^j(s),   \ \ \P\text{-a.s.}, 
\end{align} for every test function $\phi\in\mathrm{H}^b$ with $b>2$, and all time $t\in[0,T]$.

Let us introduce the assumptions on the noise coefficient $\sigma_j,$ for $j=1,\ldots,n$:
\begin{hypothesis}\label{hyp1}
The function $\sigma_j:[0,T]\times\mathbb{R}^2\times\R\to \R$ is a measurable function, satisfying the following conditions:
\begin{equation}\label{FD3}
	\left\{\begin{aligned}
		|\sigma_j(t,\x,r)|&\leq K(1+|r|), \\ |\sigma_j(t,\x,r)-\sigma_j(t,\x,s)|&\leq L|r-s|,
	\end{aligned}
	\right.
\end{equation}for $j=1,\ldots,n$, for all $t\in[0,T]$, $\x\in{\1}$ and $r,s\in\R$, where $K,L$ are some positive constants.
\end{hypothesis}Under the above assumption, we state our main result of this section.
\begin{theorem}\label{thrmex}
Let  $\xi_0\in\L^p({\1})$, for $p>2$ be given and Hypothesis \ref{hyp1} hold. Then, there exists a unique $\L^p$-valued $\mathscr{F}_t$-adapted continuous process $\xi^\e$ of the integral equation \eqref{FD2}. Moreover, if the initial data $\xi_0\in\C({\1})$, then the solution $\xi^\e$ admits a modification which is  a space-time continuous process.
\end{theorem}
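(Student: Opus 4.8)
The plan is to follow the localization-and-fixed-point scheme of \cite{IG,IGCR,IGCR2,AKMTM3}, exploiting the cancellation of the convective term that already appeared in the proof of Lemma \ref{lem4.8}. Since the quadratic nonlinearity $\q(\xi^\e)=\xi^\e(\k*\xi^\e)$ from \eqref{S1} is only locally Lipschitz, I would first \emph{truncate} it: fixing a smooth cutoff $\theta_m:[0,\infty)\to[0,1]$ with $\theta_m(r)=1$ for $r\le m$ and $\theta_m(r)=0$ for $r\ge 2m$, I replace $\q$ in \eqref{FD2} by $\theta_m(\|\xi^\e\|_{\L^p})\q(\xi^\e)$ to obtain the truncated equation \eqref{FD7}. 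Using \eqref{S2} together with \eqref{2.16}--\eqref{2.17}, the truncated drift becomes globally Lipschitz from $\L^p(\1)$ into $\LL_\sigma^p(\1)$, while Hypothesis \ref{hyp1} makes the diffusion globally Lipschitz.

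First I would establish global well-posedness of the truncated system. Working in the space of $\mathscr{F}_t$-adapted processes with $\E\sup_{t\in[0,T]}\|\cdot\|_{\L^p}^p<\infty$ (equipped, if convenient, with a weight $e^{-\lambda t}$), I would show that the solution map associated with the right-hand side of \eqref{FD7} is a contraction. The deterministic integral is controlled by the regularizing estimate \eqref{R2} of Lemma \ref{lem25}, which supplies a positive power of $t-s$ and hence an arbitrarily small Lipschitz constant for small time (or a large-$\lambda$ gain in the weighted norm); the stochastic convolution is handled by the Burkholder--Davis--Gundy inequality combined with the heat-kernel bounds \eqref{2.10}--\eqref{2.11} and the linear growth/Lipschitz conditions \eqref{FD3}. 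The Banach fixed point theorem then yields a unique global $\L^p$-valued continuous solution $\xi^\e_m$ of \eqref{FD7} (Lemma \ref{lem5.2}).

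Next I would patch these truncated solutions. Setting $\tau_m:=\inf\{t\in[0,T]:\|\xi^\e_m(t)\|_{\L^p}\ge m\}\wedge T$, uniqueness forces $\xi^\e_m=\xi^\e_{m'}$ on $[0,\tau_m]$ for $m'\ge m$, so $\{\tau_m\}$ is non-decreasing and defines a local solution $\xi^\e$ on $[0,\tau)$ with $\tau:=\lim_m\tau_m$; on $[0,t\wedge\tau_m]$ the truncation is inactive and $\xi^\e$ solves \eqref{FD2} genuinely. Global existence amounts to proving $\tau=T$, $\P$-a.s., which I would obtain from a \emph{uniform energy estimate} $\sup_m\E\big[\sup_{t\in[0,T]}\|\xi^\e(t\wedge\tau_m)\|_{\L^p}^p\big]\le C(T,\|\xi_0\|_{\L^p})$. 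The decisive point is that, testing the weak form \eqref{FD02} with $|\xi^\e|^{p-2}\xi^\e$, the convective contribution $\b(\k*\xi^\e,\xi^\e,\xi^\e)=\int_{\1}(\k*\xi^\e\cdot\nabla\xi^\e)|\xi^\e|^{p-2}\xi^\e\,\d\x$ vanishes because $\nabla\cdot(\k*\xi^\e)=0$ (cf. \cite[Lemma 2.2]{HBBF}), so only the dissipative Laplacian term and the noise term survive; the latter is bounded by $C(1+\|\xi^\e\|_{\L^p}^p)$ through \eqref{FD3}, and after a Burkholder--Davis--Gundy estimate and Gronwall's inequality the bound is uniform in $m$. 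A Chebyshev argument then gives $\P(\tau_m<T)\le C m^{-p}\to0$, whence $\tau=T$. Since this $\L^p$-energy identity cannot be applied directly to a Walsh/mild solution, I would make it rigorous by deriving the estimate for a spectral-Galerkin (or mollified) approximation, then passing to the limit via the tightness of the approximating laws and Skorokhod's representation theorem, identifying the limit with $\xi^\e$ by pathwise uniqueness.

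Pathwise uniqueness of \eqref{FD2} follows along the same lines: for two solutions with the same data, the difference satisfies an equation in which the convective term again cancels upon testing with $|\cdot|^{p-2}(\cdot)$ (as in the computation \eqref{L11}--\eqref{L14}), the residual nonlinear terms are dominated using \eqref{2.16}--\eqref{2.17}, the noise difference is controlled by the Lipschitz bound in \eqref{FD3}, and Gronwall closes the estimate. Finally, for $\xi_0\in\C(\1)$ I would upgrade regularity: choosing $p>4$, part (2) of Lemma \ref{lem25} (via \eqref{R3}) maps the deterministic integral into $\C([0,T]\times\1)$, the heat flow $\S(\cdot)\xi_0$ is continuous for continuous data, and the stochastic convolution admits a space-time continuous modification by a Kolmogorov-type/factorization argument using \eqref{2.10}--\eqref{2.11} and \eqref{FD3}; combining these yields the continuous modification. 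The main obstacle is the uniform energy estimate of the third paragraph: reconciling the quadratic, only-$\L^p$-regular nonlinearity with the need to test by $|\xi^\e|^{p-2}\xi^\e$ forces the approximation/tightness/Skorokhod detour, and controlling the stochastic integral at the $\L^p$-level rather than at the $\L^2$-level is where the bulk of the technical work lies.
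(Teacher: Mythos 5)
Your proposal follows essentially the same route as the paper's proof: a cutoff $\Pi_R(\|\cdot\|_{\L^p})$-truncation of the equation, a Banach fixed-point argument in a weighted $\L^p$-space (Proposition \ref{prop5.3}), an It\^o-formula $\L^p$-energy estimate exploiting the divergence-free cancellation of the convective term plus BDG and Gronwall (Lemma \ref{lem5.4}), stopping times with Markov's inequality to conclude globality (Remark \ref{rem4.6}), a tightness/Skorokhod argument to construct and identify the solution, and the identical $p>4$ upgrade via Lemma \ref{lem25}(2) for continuous initial data. The only deviations are cosmetic: the paper also truncates the noise coefficient, justifies the It\^o formula by working with the bounded-coefficient approximation \eqref{FD017} rather than a Galerkin/mollified scheme, and proves uniqueness by stopping-time reduction to the truncated equation instead of a direct Gronwall estimate on the difference.
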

\subsection{Existence and uniqueness results} In this subsection, we prove Theorem \ref{thrmex} by establishing several auxiliary results.  
In order to establish a solution to the system \eqref{FD1}, we use a truncation method. Let $\Pi_R(\cdot)$ be a $\C^1(\R)$ function such that 
\begin{equation}\label{FD4}
\Pi_R(r)=	\left\{
\begin{aligned}
	&	1, \ \ \text{  if  } \ \ |r|\leq R,\\
	&0, \ \ \text{  if  }\ \  |r|\geq  R+1,
\end{aligned}
\right.
\end{equation} and $|\Pi_R'(r)|\leq 2$ for all $r\in\R$. Also, by the mean value theorem we have $|\Pi_R(r_1)-\Pi_R(r_2)|\leq 2|r_1-r_2|$, for all $r_1,r_2\in\R$.  Let us introduce the truncated system for $	\xi_R^\e(t,\x)=\nabla^{\perp}\cdot\u_R^\e(t,\x), \ (t,\x)\in[0,T]\times {\1}$ as 
\begin{equation}\label{FD5}
\left\{
\begin{aligned}
	\frac{\partial \xi_R^\e}{\partial t}(t,\x)&+\u_R^\e(t,\x)\cdot\nabla\xi_R^\e(t,\x)\Pi_R(\|\xi_R^\e(t)\|_{\L^p})-\Delta\xi_R^\e(t,\x)\\&=\sqrt{\e}\sum_{j=1}^{n}\sigma_{j,R}(t,\x,\xi_R^\e(t,x))\frac{\d }{\d t}\W^j(t), \ (t,\x)\in[0,T]\times{\1},\\
	\nabla\cdot \u_R^\e(t,\x)&=0, \ (t,\x)\in[0,T]\times{\1},\\
	\xi_R^\e(0,\x)&=\xi_0(\x), \ \x\in{\1}.
\end{aligned}
\right.
\end{equation}
We  write the mild formulation of the truncated system \eqref{FD5} in sense of Walsh (see \cite{JBW}) as
\begin{align}\label{FD6}
\nonumber
\xi_R^\e(t,\x)&=\int_{\1} G(t,\x,\y)\xi_0(\y)\d\y+\int_0^t\int_{\1}\nabla_{\y} G(t-s,\x,\y) \cdot \q_R(\xi_R^\e(s,\cdot))(\y)\d\y\d s\\&\quad +\sqrt{\e}\sum_{j=1}^{n}\int_0^t\int_{\1} G(t-s,\x,\y)\sigma_{j,R}(t,\y,\xi_R^\e(t,\y))\d\y\d\W^j(t), \ \ \P\text{-a.s.,}
\end{align}for all $t\in[0,T]$, where $\q_R(\xi^\e(\cdot))(\cdot)=\Pi_R(\|\xi^\e(\cdot)\|_{\L^p})\q(\xi^\e(\cdot))(\cdot)$ and $ \sigma_{j,R}(\cdot,\cdot,\xi^\e(\cdot,\cdot))=\Pi_R(\|\xi^\e(\cdot)\|_{\L^p})\sigma_{j}(\cdot,\cdot,\xi^\e(\cdot,\cdot)).$

Set \begin{align}\label{FD7}	\nonumber
\mathcal{A}_1\xi_R^\e(t,\x)&:=\int_{\1}G(t,\x,\y)\xi_0(\y)\d\y,\\	\nonumber
\mathcal{A}_2\xi_R^\e(t,\x)&:=\int_0^t\int_{\1}\nabla_{\y} G(t-s,\x,\y) \cdot \q_R(\xi_R^\e(s,\cdot))(\y)\d\y\d s,\\ 	\nonumber
\mathcal{A}_3\xi_R^\e(t,\x)&:=\sum_{j=1}^{n}\int_0^t\int_{\1} G(t-s,\x,\y)\sigma_{j,R}(s,y,\xi_R^\e(s,y))\d\y\d\W^j(s),\\
\mathcal{A}\xi_R^\e(t,\x)&:=	\mathcal{A}_1\xi_R^\e(t,\x)+	\mathcal{A}_2\xi_R^\e(t,\x)+\sqrt{\e}	\mathcal{A}_3\xi_R^\e(t,\x).
\end{align}Let us define a Banach space $\mathscr{H}$ consisting of the adapted processes $u:\Omega\times[0,T]\to\L^p(\1)$ such that 
\begin{align*}
\|u\|_{\mathscr{H}}^p:=\sup_{t\in[0,T]}e^{-\lambda pt}\E\big[\|u(t)\|_{\L^p}^p\big],
\end{align*}where $\lambda>0$  is defined in \eqref{417} below. 

For $\xi^\e\in\L^p(\1)$ for $p>2$, we obtain that the function $\q_R$ is defined from $\L^p(\1)$ to $\LL_{\sigma}^p(\1)$ for any $p>2$. Let us recall a result from \cite{BFMZ}.
\begin{lemma}[{\cite[Lemma 9]{BFMZ}}]\label{lem5.2}
Fix $R\geq 1$ and $p>2$. Then, there exist positive constants $C_p$ and $C_{R,p}$ such that the following hold:
\begin{align}\label{FD8}
	\|\q_R(\xi)\|_{\LL_{\sigma}^p}&\leq C_p(R+1)^2, \\ 
	\label{FD10}
	\|\q_R(\xi)-\q_R(\tilde{\xi})\|_{\LL_{\sigma}^p}&\leq C_{R,p}\|\xi-\tilde{\xi}\|_{\L^p}, 
\end{align}for all $\xi, \tilde{\xi}\in\L^p(\1)$.
\end{lemma}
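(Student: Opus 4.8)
The plan is to establish the two estimates separately, reducing both to the quadratic bound \eqref{S2} for $\q$ and the velocity estimate \eqref{2.16}, combined with the cutoff properties recorded in \eqref{FD4}. For \eqref{FD8}, I would first note that $\q_R(\xi)=\Pi_R(\|\xi\|_{\L^p})\q(\xi)$ vanishes whenever $\|\xi\|_{\L^p}>R+1$, since $\Pi_R(r)=0$ for $|r|\geq R+1$. On the complementary set $\{\|\xi\|_{\L^p}\leq R+1\}$, using $|\Pi_R|\leq 1$ for the standard cutoff together with \eqref{S2} gives $\|\q_R(\xi)\|_{\LL_{\sigma}^p}\leq\|\q(\xi)\|_{\LL_{\sigma}^p}\leq C_p\|\xi\|_{\L^p}^2\leq C_p(R+1)^2$, which is the claim.

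For the Lipschitz estimate \eqref{FD10}, the first ingredient is a bilinear difference bound for the untruncated nonlinearity. Writing $\q(\xi)-\q(\tilde{\xi})=\xi\,(\k*(\xi-\tilde{\xi}))+(\xi-\tilde{\xi})(\k*\tilde{\xi})$ and applying H\"older's inequality together with \eqref{2.16} yields $\|\q(\xi)-\q(\tilde{\xi})\|_{\LL_{\sigma}^p}\leq C_p(\|\xi\|_{\L^p}+\|\tilde{\xi}\|_{\L^p})\|\xi-\tilde{\xi}\|_{\L^p}$. The second ingredient is the Lipschitz continuity $|\Pi_R(r_1)-\Pi_R(r_2)|\leq 2|r_1-r_2|$ of the cutoff, already recorded after \eqref{FD4}.

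The main obstacle, and the reason a single ``add and subtract'' decomposition does not suffice, is that the difference $\|\q(\xi)-\q(\tilde{\xi})\|_{\LL_{\sigma}^p}$ carries factors of $\|\tilde{\xi}\|_{\L^p}$ (and $\|\q(\tilde{\xi})\|_{\LL_{\sigma}^p}$ carries $\|\tilde{\xi}\|_{\L^p}^2$) that are \emph{not} controlled by $R$ once $\tilde{\xi}$ leaves the truncation ball. I would therefore split into cases according to whether $\|\xi\|_{\L^p}$ and $\|\tilde{\xi}\|_{\L^p}$ exceed $R+1$. When both norms are $\leq R+1$, the decomposition $\q_R(\xi)-\q_R(\tilde{\xi})=\Pi_R(\|\xi\|_{\L^p})[\q(\xi)-\q(\tilde{\xi})]+[\Pi_R(\|\xi\|_{\L^p})-\Pi_R(\|\tilde{\xi}\|_{\L^p})]\q(\tilde{\xi})$ works directly: the first bracket is bounded by $1$, the scalar difference in the second is controlled by the Lipschitz bound on $\Pi_R$, and both $\|\xi\|_{\L^p},\|\tilde{\xi}\|_{\L^p}\leq R+1$ control the remaining factors, producing a constant $C_{R,p}$. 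When both norms exceed $R+1$, both terms $\q_R$ vanish and the inequality is trivial.

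The delicate case is the mixed one, say $\|\xi\|_{\L^p}\leq R+1<\|\tilde{\xi}\|_{\L^p}$, where $\q_R(\tilde{\xi})=0$ and $\Pi_R(\|\tilde{\xi}\|_{\L^p})=0$. Here I would write $\|\q_R(\xi)-\q_R(\tilde{\xi})\|_{\LL_{\sigma}^p}=|\Pi_R(\|\xi\|_{\L^p})-\Pi_R(\|\tilde{\xi}\|_{\L^p})|\,\|\q(\xi)\|_{\LL_{\sigma}^p}$ and use $|\Pi_R(\|\xi\|_{\L^p})-\Pi_R(\|\tilde{\xi}\|_{\L^p})|\leq 2\|\xi-\tilde{\xi}\|_{\L^p}$ to absorb the uncontrolled $\|\tilde{\xi}\|_{\L^p}$; the factor $\|\q(\xi)\|_{\LL_{\sigma}^p}\leq C_p\|\xi\|_{\L^p}^2\leq C_p(R+1)^2$ remains bounded precisely because $\Pi_R(\|\xi\|_{\L^p})\neq 0$ forces $\|\xi\|_{\L^p}<R+1$. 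Collecting the three cases and taking $C_{R,p}$ to be the largest of the resulting constants completes the proof.
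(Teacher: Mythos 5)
Your proof is correct. The paper itself offers no proof of this lemma---it is quoted directly from \cite{BFMZ} (Lemma 9 there)---so there is no internal argument to compare against; your route, namely the case analysis on whether $\|\xi\|_{\L^p}$ and $\|\tilde{\xi}\|_{\L^p}$ exceed $R+1$, combined with the bilinear difference bound $\|\q(\xi)-\q(\tilde{\xi})\|_{\LL_{\sigma}^p}\leq C_p\big(\|\xi\|_{\L^p}+\|\tilde{\xi}\|_{\L^p}\big)\|\xi-\tilde{\xi}\|_{\L^p}$ and the Lipschitz property of $\Pi_R$, is the standard and essentially forced argument for such truncation estimates, and you correctly pinpointed the mixed case as the one where the cutoff's Lipschitz continuity must absorb the uncontrolled norm $\|\tilde{\xi}\|_{\L^p}$.
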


\begin{proposition}\label{prop5.3}
Let us assume that $\xi_0\in\L^p(\1)$, for $p>2$, and $R\geq 1$. Then, there exists a unique $\L^p$-valued $\mathscr{F}_t$-adapted continuous process $\xi_R^\e$ satisfying the truncated integral equation \eqref{FD6} such that 
\begin{align}\label{FD11}
	\E\bigg[\sup_{t\in[0,T]}\|\xi_R^\e(t)\|_{\L^p}^p\bigg]\leq C(R,T).
\end{align}
\end{proposition}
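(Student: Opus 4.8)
The plan is to obtain $\xi_R^\e$ as the unique fixed point of the map $\mathcal{A}$ defined in \eqref{FD7}, acting on the Banach space $\mathscr{H}$, via the Banach fixed point theorem; the exponential weight $e^{-\lambda pt}$ in the norm of $\mathscr{H}$ is precisely what will let me turn the (time-local) Lipschitz estimates into a genuine contraction by taking $\lambda$ large. First I would verify that $\mathcal{A}$ maps $\mathscr{H}$ into itself by estimating the three pieces separately. For $\mathcal{A}_1\xi_R^\e(t)=\S(t)\xi_0$, the contraction property of the heat semigroup on $\L^p(\1)$ gives $\|\mathcal{A}_1\xi_R^\e(t)\|_{\L^p}\le\|\xi_0\|_{\L^p}$. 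For the nonlinear term $\mathcal{A}_2$, I would apply Lemma \ref{lem25}(1) (with $\alpha=p$, $\beta=1$, so that the time exponent $\tfrac1\beta-\tfrac32=-\tfrac12>-1$ is integrable) together with the uniform bound \eqref{FD8} on $\q_R$, yielding $\|\mathcal{A}_2\xi_R^\e(t)\|_{\L^p}\le C_p(R+1)^2\,t^{1/2}$, bounded on $[0,T]$ independently of $\xi_R^\e$. The stochastic term $\mathcal{A}_3$ is the delicate one: for fixed $\x$ it is a finite sum of scalar It\^o integrals, so Burkholder--Davis--Gundy gives a pointwise (in $\x$) bound; combining this with Minkowski's integral inequality, H\"older's inequality, the linear-growth bound in Hypothesis \ref{hyp1} (so that $|\sigma_{j,R}(s,\y,\xi)|\le K(1+|\xi|)$), and the heat-kernel estimate \eqref{2.10} (whose time singularity $s^{1-\beta}$ is integrable for $\beta<2$), I would control $\E\|\mathcal{A}_3\xi_R^\e(t)\|_{\L^p}^p$ by an expression of the form $\int_0^t(t-s)^{-\kappa}\E\|\xi_R^\e(s)\|_{\L^p}^p\,\d s$. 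Multiplying by $e^{-\lambda pt}$ and taking the supremum then shows $\mathcal{A}\xi_R^\e\in\mathscr{H}$.

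For the contraction estimate I would run the same three estimates on the differences $\mathcal{A}_i\xi-\mathcal{A}_i\tilde\xi$. The deterministic initial-data term cancels, the nonlinear term is handled by the Lipschitz bound \eqref{FD10} for $\q_R$ through Lemma \ref{lem25}(1), and the stochastic term is handled by Burkholder--Davis--Gundy together with the Lipschitz bound on $\sigma_j$ from Hypothesis \ref{hyp1} and the $2$-Lipschitz property of $\Pi_R$, again invoking \eqref{2.10}. Each contribution produces, after inserting the weight, a factor of the form $\int_0^t(t-s)^{-\kappa}e^{-\lambda p(t-s)}\,\d s$; choosing $\lambda$ large enough (this is the $\lambda$ fixed in \eqref{417}) makes the total prefactor strictly less than $1$, so $\mathcal{A}$ is a contraction on $\mathscr{H}$ and admits a unique fixed point $\xi_R^\e$, which is the desired $\L^p$-valued $\mathscr{F}_t$-adapted solution of \eqref{FD6}.

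It remains to upgrade this to the maximal bound \eqref{FD11} and to the stated continuity. Here the pointwise-in-time estimates above do not suffice for the stochastic convolution, so I would use the factorization method: a stochastic Fubini argument rewrites $\mathcal{A}_3$ as a deterministic convolution of an auxiliary process, after which Young's inequality and the heat-kernel bounds of Lemma \ref{thrm2.1} yield a bound on $\E\big[\sup_{t\in[0,T]}\|\mathcal{A}_3\xi_R^\e(t)\|_{\L^p}^p\big]$, whence \eqref{FD11} with $C(R,T)$ depending on $R$ only through $(R+1)^2$; the deterministic terms $\mathcal{A}_1,\mathcal{A}_2$ are already uniformly bounded in $t$. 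The same factorization representation, combined with a Kolmogorov-type continuity argument, produces the $\L^p$-valued continuous modification. The main obstacle throughout is the control of the stochastic convolution $\mathcal{A}_3$ in the $\L^p$-topology with $p>2$: one cannot rely on a Hilbert-space It\^o isometry and must instead balance the Burkholder--Davis--Gundy exponent $p/2$ against the singular heat-kernel weights from Lemma \ref{thrm2.1}, which is exactly where the restriction $\beta<2$ in \eqref{2.10} and the integrability of $s^{1-\beta}$ become essential.
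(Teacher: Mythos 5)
Your proposal follows essentially the same route as the paper: a Banach fixed point argument for $\mathcal{A}=\mathcal{A}_1+\mathcal{A}_2+\sqrt{\e}\,\mathcal{A}_3$ on the exponentially weighted space $\mathscr{H}$, with Lemma \ref{lem25}(1) and the bounds \eqref{FD8}--\eqref{FD10} handling $\mathcal{A}_2$, BDG/Minkowski/Young together with Hypothesis \ref{hyp1} handling $\mathcal{A}_3$, and $\lambda$ chosen large as in \eqref{417} to make $\mathcal{A}$ a contraction. The only divergence is at the very end: the paper asserts that \eqref{FD11} follows from its Claim 1 (where the truncation $\Pi_R$ makes the bounds on $\mathcal{A}_2$ and on the integrand of $\mathcal{A}_3$ uniform in $t$ and in the unknown), whereas you supply the missing maximal-inequality step explicitly via the factorization method and a Kolmogorov continuity argument --- a more careful but equally valid way to finish.
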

\begin{proof}
The proof of this proposition is based on a fixed point argument. The proof is divided into two parts. In the first part, we establish that the operator $\mathcal{A}$ is well-defined in the Banach space $\mathscr{H}$. In the second part, we show that $\mathcal{A}$ is a contraction map and then the existence and uniqueness of truncated integral equation \eqref{FD6} follows from fixed point arguments.

\vspace{2mm}
\noindent
\textbf{Claim 1.} {\it The operator $\mathcal{A}$ is well-defined on the Banach space $\mathscr{H}$}.

From \eqref{FD7}, we have 
\begin{align}\label{FD12}
	\|	\mathcal{A}\xi_R^\e(t)\|_{\L^p}^p\leq C_p\left(	\|\mathcal{A}_1\xi_R^\e(t)\|_{\L^p}^p+	\|\mathcal{A}_2\xi_R^\e(t)\|_{\L^p}^p+	\|\mathcal{A}_3\xi_R^\e(t)\|_{\L^p}^p\right).
\end{align}Using \eqref{2.10} and Young's inequality, we obtain  
\begin{align}\label{FD13}\nonumber
	\|\mathcal{A}_1\xi_R^\e\|_{\mathscr{H}} &=\sup_{t\in[0,T]}e^{-\lambda t}\|\mathcal{A}_1\xi_R^\e(t)\|_{\L^p} \\&\leq \sup_{t\in[0,T]}e^{-\lambda t} \big\{\|G(t,0,\cdot)\|_{\L^1}\|\xi_0\|_{\L^p}\big\}<\infty.
\end{align}Applying \eqref{R2} for $\beta=1$ and $\alpha=p$, we obtain 
\begin{align*}
	\|\mathcal{A}_2\xi_R^\e(t)\|_{\L^p} \leq \int_0^t(t-s)^{-\frac{1}{2}}\|\q_R(\xi_R^\e(s,\cdot))\|_{\LL_{\sigma}^p}\d s \leq C_p(R+1)^2t^\frac{1}{2},
\end{align*}which implies 
\begin{align}\label{FD14}
	\|\mathcal{A}_2\xi_R^\e\|_{\mathscr{H}}^p=\sup_{t\in[0,T]}e^{-\lambda t}\|\mathcal{A}_2\xi_R^\e(t)\|_{\L^p}^p <\infty.
\end{align}Now, we consider the term $\E\big[\|\mathcal{A}_3\xi_R^\e(t)\|_{\L^p}^p\big]$, and estimate it using Hypothesis \ref{hyp1}, Fubini's theorem, Burkholder-Davis-Gundy's (BDG)   (see \cite[Theorem 1.1.7]{SVLBLR}), \cite[Proposition 3.5]{IGCR}, H\"older's, Minkowski's and Young's inequalities  as 
\begin{align}\label{FD15}\nonumber	&\E\bigg[\|\mathcal{A}_3\xi_R^\e(t)\|_{\L^p}^p\bigg]\\&\nonumber=\e^\frac{p}{2}\E\bigg[
	\int_{\1}\bigg|\sum_{j=1}^{n}\int_0^t\int_{\1}G(t-s,\x,\y)\sigma_{j,R}(s,\x,\xi_R^\e(t,x))\d\y\d\W^j(s) \bigg|^p\d \x\bigg]\\&\nonumber\leq C  \e^\frac{p}{2}\E\bigg[
	\int_{\1}\bigg|\sum_{j=1}^{n}\int_0^t\bigg|\int_{\1}G(t-s,\x,\y)\sigma_{j,R}(s,\y,\xi_R^\e(s,\y))\d\y\bigg|^2\d s \bigg|^\frac{p}{2}\d \x\bigg]\\& \nonumber\leq C\e^\frac{p}{2}\E\bigg[\sum_{j=1}^{n}\int_0^t\bigg|\int_{\1}\bigg|\int_{\1}G(t-s,\x,\y)\sigma_{j,R}(s,,\y,\xi_R^\e(s,\y))\d \y\bigg|^p\d \x\bigg|^\frac{2}{p}\d s\bigg]^{\frac{p}{2}}
	\\&\nonumber \leq C\e^\frac{p}{2}\E\bigg[\sum_{j=1}^{n}\int_0^t\big\|G(t-s,\cdot,\cdot)*\sigma_{j,R}(s,\cdot,\xi_R^\e(s,\cdot))\big\|_{\L^p}^2\d s\bigg]^{\frac{p}{2}}
	\\&\nonumber \leq C\e^\frac{p}{2}\E\bigg[\sum_{j=1}^{n}\int_0^t\big\|G(t-s,\cdot,\cdot)\|_{\L^1}^2\|\sigma_{j,R}(s,,\cdot,\xi_R^\e(s,\cdot))\big\|_{\L^p}^2\d s\bigg]^{\frac{p}{2}}
	\\&\nonumber \leq C\e^\frac{p}{2}\E\bigg[\sum_{j=1}^{n}\int_0^t\|\sigma_{j,R}(s,,\cdot,\xi_R^\e(s,\cdot))\big\|_{\L^p}^2\d s\bigg]^{\frac{p}{2}}\\&\nonumber\leq 
	C\e^\frac{p}{2}(tn)^{\frac{p}{2}-1}\int_0^t\E\left[\big(1+\Pi_R^p(\|\xi^\e_R(s)\|_{\L^p})\|\xi_R^\e(s)\|_{\L^p}^p)\right]\d s\\&\leq C\e^\frac{p}{2}K^{\frac{p}{2}-1}T^{\frac{p}{2}}\big(1+(R+1)^p\big).
\end{align}Combining \eqref{FD12}-\eqref{FD15}, the claim follows.

\vspace{2mm}
\noindent
\textbf{Claim 2.} {\it $\mathcal{A}$ is a contraction}. Let $\xi_{R,1}^\e,\xi_{R,2}^\e\in\mathscr{H}$. Without loss of generality, we can assume that $\|\xi_{R,1}^\e\|_{\L^p}\leq \|\xi_{R,2}^\e\|_{\L^p}$. To end this, we need to establish that $\mathcal{A}$ is a contraction on the Banach space $\mathscr{H}$. Let us start with the term $\|\mathcal{A}_2\xi_{R,1}^\e(\cdot)-\mathcal{A}_2\xi_{R,2}^\e(\cdot)\|_{\L^p}$ and  estimate it  using \eqref{R2} with $\alpha=p,\  \beta=1$ to obtain 
\begin{align*}
	&	\|\mathcal{A}_2\xi_{R,1}^\e(t)-\mathcal{A}_2\xi_{R,2}^\e(t)\|_{\L^p} \\&\nonumber= \bigg\|\int_0^t\int_{\1} \nabla_{\y} G(t-s,\x,\y) \cdot \big(\q_R(\xi_{R,1}^\e(s,\cdot)-\q_R(\xi_{R,2}^\e(s,\cdot))\big)(\y)\d\y\d s\bigg\|_{\L^p}\\&\nonumber \leq C\int_0^t(t-s)^{-\frac{1}{2}}\|\q_R(\xi_{R,1}^\e(s,\cdot)-\q_R(\xi_{R,2}^\e(s,\cdot))\|_{\LL_{\sigma}^p}\d s,
\end{align*}for all $t\in[0,T]$. From the above inequality and Lemma \ref{lem5.2} (see \eqref{FD10}), we obtain for all $t\in[0,T]$
\begin{align}\label{FD16}\nonumber &	\|\mathcal{A}_2\xi_{R,1}^\e-\mathcal{A}_2\xi_{R,2}^\e\|_{\mathscr{H}}^p\\&\nonumber \leq C_{R,p}\sup_{t\in[0,T]}e^{-\lambda pt}\E\bigg[\bigg(\int_0^t(t-s)^{-\frac{1}{2}}\|\xi_{R,1}^\e(s)-\xi_{R,2}^\e(s)\|_{\L^p}\d s\bigg)^p\bigg] \\&\nonumber \leq
	C_{R,p}T^{\frac{p-1}{2p}}\sup_{t\in[0,T]}\bigg(\int_0^t(t-s)^{-\frac{1}{2}}e^{-\lambda(t-s)}e^{-\lambda s}\E\big[\|\xi_{R,1}^\e(s)-\xi_{R,2}^\e(s)\|_{\L^p}^p \big]\d s\bigg)\\& \leq  C_{R,p}T^{\frac{p-1}{2p}}\left(\frac{\Gamma(\frac{1}{2})}{\lambda^{\frac{1}{2}}}\right)\|\xi_{R,1}^\e-\xi_{R,2}^\e\|_{\mathscr{H}}^p= C_{R,p}T^{\frac{p-1}{2p}}\left(\frac{\sqrt{\pi}}{\lambda^{\frac{1}{2}}}\right)\|\xi_{R,1}^\e-\xi_{R,2}^\e\|_{\mathscr{H}}^p,
\end{align}for $p>2$, where $\Gamma(\cdot)$ denotes the Gamma function. 

Using similar arguments as in \eqref{FD15}, Hypothesis \ref{hyp1} (Lipschitz continuity of $\sigma_j$) and Minkowski's inequality, we find 
\begin{align*}
	&\E\big[	\|\mathcal{A}_3\xi_{R,1}^\e(t)-\mathcal{A}_3\xi_{R,2}^\e(t)\|_{\L^p} ^p\big]
	\\&\nonumber \leq C\e^\frac{p}{2}\E\bigg[\sum_{j=1}^{n}\int_0^t\big\|G(t-s,\cdot,\cdot)*(\sigma_{j,R}(s,\cdot,\xi_{R,1}^\e(s,\cdot))-\sigma_{j,R}(s,\cdot,\xi_{R,2}^\e(s,\cdot)))\big\|_{\L^p}^2\d s\bigg]^{\frac{p}{2}}
	\\&\nonumber \leq C\e^\frac{p}{2}\E\bigg[\sum_{j=1}^{n}\int_0^t\big\|G(t-s,\cdot,\cdot)\|_{\L^1}^2\|\sigma_{j,R}(s,\cdot,\xi_{R,1}^\e(s,\cdot))-\sigma_{j,R}(s,\cdot,\xi_{R,2}^\e(s,\cdot)\big\|_{\L^p}^2\d s\bigg]^{\frac{p}{2}}\\&\nonumber\leq C\e^\frac{p}{2}\E\bigg[\sum_{j=1}^{n}\int_0^t\|\sigma_{j,R}(s,\cdot,\xi_{R,1}^\e(s,\cdot))-\sigma_{j,R}(s,\cdot,\xi_{R,2}^\e(s,\cdot)\big\|_{\L^p}^2\d s\bigg]^{\frac{p}{2}}\\&\leq C_{\e,n,p,L}\E\bigg[\int_0^t\|\xi_{R,1}^\e(s)-\xi_{R,2}^\e(s)\|_{\L^p}^2\d s\bigg]^\frac{p}{2} \\&\leq C_{\e,n,p,L}\left(\bigg\{\E\bigg[\int_0^t\|\xi_{R,1}^\e(s)-\xi_{R,2}^\e(s)\|_{\L^p}^2\d s\bigg]^\frac{p}{2}\bigg\}^\frac{2}{p}\right)^\frac{p}{2} \\&\leq 
	C_{\e,n,p,L}\left( \int_0^t\bigg\{\E\big[\|\xi_{R,1}^\e(s)-\xi_{R,2}^\e(s)\|_{\L^p}^p\big]\bigg\}^\frac{2}{p}\d s\right)^\frac{p}{2}
	\\&\leq 	C_{\e,n,p,L} \bigg(\frac{1}{2\lambda}(e^{2\lambda t}-1)\bigg)^\frac{p}{2}\sup_{s\in[0,t]} \bigg\{e^{-\lambda ps}\E\big[\|\xi_{R,1}^\e(s)-\xi_{R,2}^\e(s)\|_{\L^p}^p\big]\bigg\},
\end{align*}for $p\geq2$. From the above inequality, we obtain 
\begin{align}\label{FD17}
	\|\mathcal{A}_3\xi_{R,1}^\e-\mathcal{A}_3\xi_{R,2}^\e\|_{\mathscr{H}} ^p\leq C_{\e,n,p,L}\bigg(\frac{1}{2\lambda}(1-e^{-2\lambda t})\bigg)^\frac{p}{2}\|\xi_{R,1}^\e-\xi_{R,2}^\e\|_{\mathscr{H}}^p.
\end{align} Combining \eqref{FD16}-\eqref{FD17}, and choosing $\lambda>0$ sufficiently large so that 
\begin{align}\label{417}
	C_{R,p}T^{\frac{p-1}{2p}}\left(\frac{\sqrt{\pi}}{\lambda^{\frac{1}{2}}}\right)+C_{\e,n,p,L}\bigg(\frac{1}{2\lambda}(1-e^{-2\lambda T})\bigg)^\frac{p}{2} <1,
\end{align}for $p>2$. For the above value of $\lambda$, the mapping $\mathcal{A}$ is a contraction on the Banach space $\mathscr{H}$. 

Therefore an application of the Banach fixed point theorem yields  the existence of  a unique fixed point for the operator $\mathcal{A}$ and this gives the existence of a unique solution of the truncated integral equation and the estimate \eqref{FD11} holds from Claim 1. 
\end{proof}
By Proposition \ref{prop5.3}, we obtain the existence of unique local solution of the integral equation \eqref{FD2}, that is, up to a stopping time $	\tau_{R}:=\inf\{t\geq0:\|\xi^\e(t)\|_{\L^p}\geq R\}\wedge T$.  To prove that the solution we achieved is a global one, we must demonstrate that a uniform bound holds.  Consider the following integral equation:
\begin{align}\label{FD017}\nonumber
\xi_m^\e(t,\x)&=\int_{\1} G(t,\x,\y)\xi_{0,m}(\y)\d\y+\int_0^t\int_{\1}\nabla_{\y} G(t-s,\x,\y) \cdot \q_m(\xi_m^\e(s,\cdot))(\y)\d\y\d s\\&\nonumber\quad +\sqrt{\e}\sum_{j=1}^{n}\int_0^t\int_{\1} G(t-s,\x,\y)\sigma_{j,m}(s,y,\xi_m^\e(s,y))\d\y\d\W^j(s)\\&=:I_{1,m}+I_{2,m}+I_{3,m},
\end{align}where we have taken the sequence of bounded Borel measurable functions $\q_m(t,\x,r)=\q(t,\x,r)$ and $\sigma_{j,m}(t,\x,r)=\sigma_j(t,\x,r)$, for $|r|\leq m$, and $\q_m(t,\x,r)=\sigma_{j,m}(t,\x,r)=0$, for $|r|\geq m+1$ such that they are globally Lipschitz in $r\in\R$ with constants independents of $m$.

The following result provides a uniform energy estimate for $\xi_m^\e$.
\begin{lemma}\label{lem5.4}For each $m\in\N$, $p>2$, there exists a constant $C>0$ such that 
\begin{align}\label{FD18}\nonumber
	\E\bigg[\sup_{t\in[0,T]}\|\xi_m^\e(t)\|_{\L^p}^p &+2p(p-1)\int_0^T\||\xi_m^\e(s)|^{\frac{p-2}{2}}\nabla\xi_m^\e(s)\|_{\L^2}^2\d s\bigg]\\& \leq \big(2\|\xi_0\|_{\L^p}^p+C_{\e,p,T}\big) e^{C_{\e,p,T}}.
\end{align}
\end{lemma}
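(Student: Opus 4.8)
The plan is to derive \eqref{FD18} from an Itô formula for the functional $\xi\mapsto\|\xi\|_{\L^p}^p=\int_{\1}|\xi(\x)|^p\d\x$ applied along the truncated equation \eqref{FD017}, whose differential form reads $\d\xi_m^\e=\big[\Delta\xi_m^\e-\nabla\cdot\q_m(\xi_m^\e)\big]\d t+\sqrt{\e}\sum_{j=1}^n\sigma_{j,m}(\cdot,\xi_m^\e)\d\W^j$. Since $\xi_m^\e$ is only a mild (Walsh) solution, I would first justify the formula on finite-dimensional Galerkin truncations (or via the regularizing action of the semigroup $\S(\cdot)$), where all quantities are smooth, obtain the identity with constants independent of the approximation, and pass to the limit. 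This produces
\begin{align*}
\|\xi_m^\e(t)\|_{\L^p}^p&=\|\xi_0\|_{\L^p}^p-p(p-1)\int_0^t\||\xi_m^\e(s)|^{\frac{p-2}{2}}\nabla\xi_m^\e(s)\|_{\L^2}^2\d s-p\int_0^t\int_{\1}|\xi_m^\e|^{p-2}\xi_m^\e\,\nabla\cdot\q_m(\xi_m^\e)\,\d\x\d s\\
&\quad+M_t+\frac{\e p(p-1)}{2}\int_0^t\int_{\1}|\xi_m^\e|^{p-2}\sum_{j=1}^n\sigma_{j,m}^2\,\d\x\d s,
\end{align*}
where $M_t=p\sqrt{\e}\sum_{j=1}^n\int_0^t\int_{\1}|\xi_m^\e|^{p-2}\xi_m^\e\sigma_{j,m}\,\d\x\d\W^j$ is a local martingale.

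The crucial structural point is the vanishing of the nonlinear term. Writing $\q_m(\xi_m^\e)=\Pi_m(\|\xi_m^\e\|_{\L^p})(\k*\xi_m^\e)\xi_m^\e=\Pi_m\,\u_m^\e\xi_m^\e$ with $\nabla\cdot\u_m^\e=0$, the cancellation $\int_{\1}(\u_m^\e\cdot\nabla\xi_m^\e)|\xi_m^\e|^{p-2}\xi_m^\e\,\d\x=0$ (the identity $\b(\u,\cdot,\cdot)=0$ for divergence-free $\u$ employed in the proof of Lemma \ref{lem4.8}, cf. \cite[Lemma 2.2]{HBBF}) forces the third term to be zero. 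The dissipation term is nonnegative and is retained on the left-hand side. For the Itô correction, the linear growth bound \eqref{FD3}, valid uniformly in $m$ because $|\Pi_m|\le 1$, gives $\sum_{j=1}^n\sigma_{j,m}^2\le C(1+|\xi_m^\e|^2)$, whence $\int_{\1}|\xi_m^\e|^{p-2}\sum_j\sigma_{j,m}^2\,\d\x\le C(1+\|\xi_m^\e\|_{\L^p}^p)$ after Young's inequality; this is the uniform-in-$m$ feature that drives the whole estimate.

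It then remains to control the martingale. Taking the supremum over $[0,t]$ and then expectation, I would apply the Burkholder--Davis--Gundy inequality to $M$; bounding its quadratic variation by Hölder's inequality ($\int_{\1}|\xi_m^\e|^{p-1}|\sigma_{j,m}|\,\d\x\le\|\xi_m^\e\|_{\L^p}^{p-1}\|\sigma_{j,m}\|_{\L^p}$) together with the linear growth \eqref{FD3} yields
$$\E\Big[\sup_{s\le t}|M_s|\Big]\le C\,\E\Big[\Big(\int_0^t\|\xi_m^\e(s)\|_{\L^p}^{2p}\d s\Big)^{1/2}\Big]\le \tfrac12\E\Big[\sup_{s\le t}\|\xi_m^\e(s)\|_{\L^p}^p\Big]+C_{\e,p}\int_0^t\E\big[\|\xi_m^\e(s)\|_{\L^p}^p\big]\d s,$$
where the factor $\tfrac12$ arises from Young's inequality and is absorbed into the left-hand side. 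Multiplying the resulting inequality by $2$ produces the advertised prefactor $2\|\xi_0\|_{\L^p}^p$ and the coefficient $2p(p-1)$ in front of the dissipation, after which a final application of Gronwall's inequality closes the estimate with a constant $C_{\e,p,T}$ of the stated form. The main obstacle is the rigorous justification of the Itô expansion for the merely mild solution $\xi_m^\e$, together with the verification that the nonlinear term genuinely cancels after integration by parts; once these are in place, the BDG/Young absorption and the Gronwall step are routine.
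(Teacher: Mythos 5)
Your proposal is correct and follows essentially the same route as the paper's proof: Itô's formula for $\|\cdot\|_{\L^p}^p$ along the truncated equation, cancellation of the nonlinear term via the divergence-free field $\u_m^\e=\k*\xi_m^\e$ (citing \cite[Lemma 2.2]{HBBF}), the linear growth bound for the Itô correction, BDG plus Young's inequality to absorb half of $\E\big[\sup_{t}\|\xi_m^\e(t)\|_{\L^p}^p\big]$ into the left-hand side, and Gronwall to close. The only difference is presentational: the paper justifies the Itô expansion by applying it pointwise in $\x$ and integrating over $\1$ (citing \cite{IG}) rather than through a Galerkin regularization, which is an equally standard device.
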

\begin{proof}
Applying It\^o's formula to the function $|\cdot|^p$ for the process $\xi_m^\e(\cdot,\x)$ for $\x\in{\1}$ and then taking integration over the spatial domain ${\1}$ (cf. \cite{IG}), we deduce for all $t\in[0,T]$
\begin{align}\label{FD19}\nonumber
	\|\xi_m^\e(t)\|_{\L^p}^p &= \|\xi_0\|_{\L^p}^p-p(p-1)\int_0^t\||\xi_m^\e(s)|^{\frac{p-2}{2}}\nabla\xi_m^\e(s)\|_{\L^2}^2\d s\\&\nonumber \quad +p(p-1)\int_0^t \big(|\xi_m^\e(s,y)|^{p-2}\nabla\xi_m^\e(s,\y), \q_m(\xi_m^\e(s,\y))\big)\d s\\&\nonumber \quad+p\sqrt{\e}\sum_{j=1}^{n}\int_0^t\int_{\1} |\xi_m^\e(s,y)|^{p-2} \xi_m^\e(s,\y)\sigma_{j,m}(s,\y,\xi_m^\e(s,\y))\d \y\d\W^j(s)\\&\nonumber\quad +\frac{\e}{2}p(p-1)\sum_{j=1}^{n}\int_0^t\int_{\1} |\xi_m^\e(s,\y)|^{p-2}\sigma_{j,m}^2(s,\y,\xi_m^\e(s,\y))\d \y\d s\\&=: \|\xi_0\|_{\L^p}^p-p(p-1)I_1+p(p-1)I_2+I_3+I_4.
\end{align}Consider the term $I_2$ and using $\q(\xi_m^\e)=\xi^\e_m(\k*\xi_m^\e)$, $\u_m^\e=\k*\xi_m^\e$ and \cite[Lemma 2.2]{HBBF}  as 
\begin{align}\label{FD019}
	I_2&= \int_0^t\big(|\xi_m^\e(s,y)|^{p-2}\xi_m^\e(s,y)\nabla \xi_m^\e(s,y),\u_m^\e(s,y)\big)\d s=0,
\end{align}since $\u_m^\e$ is a divergence free velocity field.

Now, we consider the term $\E\bigg[\sup\limits_{t\in[0,T]}|I_3|\bigg]$ and estimate it using BDG (see \cite[Theorem 1.1.7]{SVLBLR}), H\"older's and Young's inequalities as
\begin{align}\label{FD20}\nonumber
	&	\E\bigg[\sup_{t\in[0,T]}|I_3|\bigg] \\&\nonumber=p\sqrt{\e} \E\bigg[\sup_{s\in[0,T]}\bigg|\sum_{j=1}^{n}\int_0^t\int_{\1} |\xi_m^\e(s,y)|^{p-2} \xi_m^\e(s,\y)\\&\qquad\times\nonumber\sigma_{j,m}(s,\y,\xi_m^\e(s,\y))\d \y\d\W^j(s)\bigg|\bigg]\\& \nonumber\leq C_{\e,p}\E\bigg[\int_0^T\bigg(\int_{\1}|\xi_m^\e(s,y)|^{p-2} \xi_m^\e(s,\y)\sigma_{j,m}(s,\y,\xi_m^\e(s,\y))\d \y \bigg)^2\d s\bigg]^\frac{1}{2}\\&\nonumber\leq C_{\e,p}\E\bigg[\int_0^T\bigg(\int_{\1} \big(1+|\xi_m^\e(s,\y)|^p\big)\d \y\bigg)^2\d s\bigg]^\frac{1}{2}\\&\nonumber\leq C_{\e,p}\E\bigg[T+\int_0^T\|\xi_m^\e(s)\|_{\L^p}^{2p}\d s\bigg]^\frac{1}{2}\\&\leq 
	\frac{1}{2}\E\bigg[\sup_{t\in[0,T]}\|\xi_m^\e(t)\|_{\L^p}^p\bigg]+C_{\e,p}\int_0^T\E\big[\|\xi_m^\e(s)\|_{\L^p}^p\big]\d s+C_{\e,p,T}.
\end{align}
Using Hypothesis \ref{hyp1}, we estimate the term $I_4$ as 
\begin{align}\label{FD21}
	I_4\leq C_{\e,p}\int_0^t\int_{\1} \big(1+|\xi_m^\e(s,\y)|^p\big)\d \y\d s\leq C_{\e,p}\int_0^t\|\xi_m^\e(s)\|_{\L^p}^p\d s+C_{\e,p,T}.
\end{align}Combining \eqref{FD19}-\eqref{FD21}, we find 
\begin{align}\label{FD22}\nonumber
	\E\bigg[\sup_{t\in[0,T]}\|\xi_m^\e(t)\|_{\L^p}^p\bigg] &+2p(p-1)\E\bigg[\int_0^T\||\xi_m^\e(s)|^{\frac{p-2}{2}}\nabla\xi_m^\e(s)\|_{\L^2}^2\d s\bigg]\\& \leq 2\|\xi_0\|_{\L^p}^p+ C_{\e,p}\int_0^t\E\big[\|\xi_m^\e(s)\|_{\L^p}^p\big]\d s+C_{\e,p,T}. 
\end{align}Applying Gronwall's inequality in \eqref{FD22}, we obtain the required uniform energy estimate \eqref{FD18}.
\end{proof}Let us go back to the integral equation \eqref{FD2}, and establish the existence and uniqueness result stated in Theorem \ref{thrmex}. 
\begin{proof}[Proof of Theorem \ref{thrmex}]
\textsl{Uniqueness.} We employ a stopping time argument to prove this. Let us assume that $\xi_1^\e$ and $\xi_2^\e$ be any two solutions of the integral equation \eqref{FD2}. Then, $\xi_1^\e$ and $\xi_2^\e$ are the solutions of the truncated integral equation \eqref{FD6}. Let us define the following stopping times
\begin{align*}
	\tau_{R}^1:=\inf\{t\geq0:\|\xi_1^\e(t)\|_{\L^p}\geq R\}\wedge T \ \ \text{ and } \ \ 	\tau_{R}^2:=\inf\{t\geq0:\|\xi_2^\e(t)\|_{\L^p}\geq R\}\wedge T.
\end{align*}By Proposition \ref{prop5.3}, we obtain $\xi_1^\e(t)=\xi_2^\e(t), \ t\in[0,\tau_R^1\wedge\tau_R^2]$. By the uniform energy estimate \eqref{FD18} for $\xi_i^\e$, $i=1,2$, and an application of Markov's inequality yields (see Remark \ref{rem4.6} below)
\begin{align*}
	\P\big(\tau_R^1\wedge\tau_R^2<T\big)\to0, \ \ \text{ as }\ \ R\to\infty
\end{align*} and the uniqueness follows. 

\vspace{2mm}
\noindent
\textsl{Existence.} Let us now show the existence of the solution $\xi^\e$ to the integral equation \eqref{FD2} in the interval $[0,T]$. The proof of this part is based on Skorokhod's representation theorem (see \cite[Theorem 3]{RMD}) and  \cite[Lemma 1.1]{IGNK}. By Lemma \ref{lem25}, we obtain that the sequence of  processes 
\begin{align}\label{FD24}
	I_{2,m}&:=\int_0^t\int_{\1}\nabla_{\y} G(t-s,\x,\y) \cdot \q_m(\xi_m^\e(s,\cdot))(\y)\d\y\d s, \ \ t\in[0,T]
\end{align}is tight in $\C([0,T];\L^p({\1}))$, for $p>2$, uniformly in $m$. By \cite[Corollary 3.6]{IGCR}, the sequence of processes
\begin{align}\label{FD25}
	I_{3,m}&:=\sum_{j=1}^{n}\int_0^t\int_{\1} G(t-s,\x,\y)\sigma_{j,m}(s,y,\xi_m^\e(s,y))\d\y\d\W^j(s), \ \ t\in[0,T]
\end{align}is also tight in $\C([0,T];\L^p({\1}))$, for $p>2$. Note that the sequence 
\begin{align}\label{FD26}
	I_{1,m}&:=\int_{\1}G(t,\x,\y)\xi_{0,m}(\y)\d\y, \ \ t\in[0,T]
\end{align}is tight in $\C([0,T];\L^p({\1}))$. Therefore, one can conclude that $\xi_m^\e(t)=\sum\limits_{i=1}^{3}I_{i,m}, \  t\in[0,T]$ is tight in $\C([0,T];\L^p({\1}))$, for $p>2$, uniformly in $m$. 

By Skorokhod's representation theorem, for a given pair of subsequences $\{\xi_{m}^\e\}$ and $\{\xi_l^\e\}$, there exist subsequences $m_i$ and $l_i$ (denoted by $\xi_{m_i}^\e$, and $\xi_{l_i}^\e$) and a sequence of random variables $\z_i^\e:=(\vi{\xi}_{i}^\e,\bar{\xi}_{i}^\e,\bar{\W}^j_i)$, $i=1,2,\ldots,$ in $\mathcal{I}=\C([0,T];\L^p({\1}))\times\C([0,T];\L^p({\1}))\times\C([0,T]\times{\1})$, for $p>2$ in some probability space\\ $(\bar{\Omega},\bar{\mathscr{F}},\{\bar{\mathscr{F}}_t\}_{t\geq0},\bar{\P})$ such that the sequence $\z_i$ converges to $\z^\e=(\vi{\xi}^\e,\bar{\xi}^\e,\bar{\W}^j),\ \bar{\P}$-a.s.,  in $\mathcal{I}$ as $i\to\infty$ and the laws of $z^{\e}_i$ and $(\xi_{m_i}^{\e},\xi_{l_i}^{\e},\W^j)$ coincide. Here, the two random fields $\bar{\W}^j$ and $\bar{\W}^j_i$ are Wiener process defined on different stochastic basis $(\bar{\Omega},\bar{\mathscr{F}},\{\bar{\mathscr{F}}_t\}_{t\geq0},\bar{\P})$ and $(\bar{\Omega},\bar{\mathscr{F}},\{\bar{\mathscr{F}}_t^i\}_{t\geq0},\bar{\P})$, respectively, where $\bar{\mathscr{F}}_t$ and $\bar{\mathscr{F}}_t^i$ are the completion of the $\sigma$-fields generated by $\z^\e(t,\x)$ and $\z_i^\e(t,\x)$ for all $(t,\x)\in[0,T]\times{\1}$, respectively. Using \cite[Lemma 1.1]{IGNK}, our aim is to show that $\z_i^\e$ converges to $\z^\e$ weakly. From \cite[Proposition 3.7]{IGCR}, we have the equivalency between weak and mild solutions. Letting $i\to\infty$, in the weak formulation \eqref{FD02} with $\vi{\xi}_i^\e$ in place of $\xi^\e$, for every smooth function $\phi\in\mathrm{H}^b$ with $b>2$, we have
\begin{align}\label{FD27}\nonumber
	\int_{\1}\vi{\xi}^\e(t,\y)\phi(\y)\d \y&=\int_{\1}\xi_0(\y)\phi(\y)\d \y+
	\int_0^t\int_{\1} \vi{\xi}^\e(t,\y)\Delta\phi(\y)\d \y\d s
	\\&\nonumber\quad-\int_0^t\int_{\1}\q(\vi{\xi}^\e(s,\cdot))(\y)\cdot\nabla\phi(\y)\d \y\d s\\&\quad+\sqrt{\e}\sum_{j=1}^{n}\int_0^t\int_{\1}\sigma_j(s,\y,\vi{\xi}^\e(s,\y))\phi(\y)\d \y\d\W^j(s),   \ \ \P\text{-a.s.}, 
\end{align}  for all $t\in[0,T]$ on the probability space $(\bar{\Omega},\bar{\mathscr{F}},\{\bar{\mathscr{F}}_t^i\}_{t\geq0},\bar{\P})$. Similarly, \eqref{FD27} hols for $\bar{\xi}^\e$ also. Then by the uniqueness of the solution we deduce that $\vi{\xi}^\e=\bar{\xi}^\e$. Using \cite[Lemma 1.1]{IGNK}, we conclude that $\xi^\e_m$ converges in $\C([0,T];\L^p({\1}))$, for $p>2$, in probability to some random element $\xi^\e\in\C([0,T];\L^p({\1}))$, for $p>2$. 

Let us now assume that the initial data $\xi_0\in\C({\1})$. Then our solution $\xi^\e$ of the integral equation \eqref{FD2} is the sum of three terms. The first term $\int_{\1}G(t,\x,\y)\xi_0(\y)\d \y$ is continuous, using the properties of heat kernel $G$ (see Lemma \ref{thrm2.1}(ii)). We know that  $\xi_0\in \L^p(\1)$, for any $p\in[1,\infty]$, since $\xi_0\in\C({\1})$. Therefore, $\xi\in\C([0,T];\L^p(\T^2))$, for all $p>2$ and \eqref{2.17} implies $\q(\xi)\in\C([0,T];\LL_\sigma^p({\1}))$. Fixing $p>4$, we conclude   by Lemma \ref{lem25} (2)  that $J\q(\xi)\in\C([0,T]\times{\1})$. For $p>2$, the tightness of final term $\sum_{j=1}^{n}\int_0^t\int_{\1} G(t-s,\x,\y)\sigma_{j,m}(s,y,\xi_m^\e(s,y))\d\y\d\W^j(s)$ in the space $\C([0,T]\times{\1})$ is immediate from \cite[Corollary 3.6]{IGCR}. Hence, the solution $\xi^\e$ has a stochastic modification which is continuous in $(t,\x)\in[0,T]\times{\1}$. This completes the proof of Theorem \ref{thrmex}. 
\end{proof}

\begin{remark}\label{rem4.6}
One can show the existence of solution in the following way also: We first consider the solution $\xi_R^\e$ of the truncated integral equation \eqref{FD6} for any $R\in\N$. Let us define a sequence of stopping times 
\begin{align*}\tau_R:=\inf\{t\geq0:\|\xi_R^\e(t)\|_{\L^p}\geq R\}\wedge T, \ \ \text{ for } \ \ R\in\N.
\end{align*}From Proposition \ref{prop5.3}, we know that $\xi_R^\e(t,\cdot)$ is the unique solution of the integral equation \eqref{FD2} for any $t\in[0,\tau_R]$. By the uniqueness of solution, we infer that  for any $P\geq R$, $\tau_R\leq \tau_P$ and $\xi_P^\e(t)=\xi_R^\e(t)$ for $t\leq \tau_R$. Let us define $\xi^\e(t)=\xi_R^\e(t)$ for $t\leq \tau_R$. Using the similar arguments we are able to construct a solution $\xi^\e(t)$ of the integral equation \eqref{FD2} in the random interval $[0,\tau_\infty]$, where $\tau_\infty=\sup\limits_{R\geq 0}\tau_R$.
Now, our aim is to establish that $\tau_\infty=T,\ \P$-a.s., which is equivalent to show that $\lim\limits_{R\to\infty}\P\big(\tau_R<T\big)=0$.

From the uniform energy estimate \eqref{FD18}, we have 
\begin{align*}
	\E\bigg[\sup_{t\in[0,T]}\|\xi_R^\e(t)\|_{\L^p}^p\bigg] \leq C\big(\|\xi_0\|_{\L^p}^p+1\big),
\end{align*}for $p>2$. Since $\tau_R\leq \tau_\infty$, we have 
\begin{align*}
	\P\big(\tau_\infty<T\big)\leq \P\big(\tau_R<T\big) =\P\bigg(\sup_{t\in[0,T]}\|\xi_R^\e(t)\|_{\L^p}^p> R^p\bigg). 
\end{align*}Applying Markov's inequality in the above inequality, we find 
\begin{align*}
	\P\big(\tau_R<T\big) \leq \frac{1}{R^p}\E\bigg[\sup_{t\in[0,T]}\|\xi_R^\e(t)\|_{\L^p}^p\bigg]\leq \frac{C\big(\|\xi_0\|_{\L^p}^p+1\big)}{R^p}.
\end{align*}Passing $R\to\infty$ leads to our required result, that is, $\tau_\infty=T,\ \P$-a.s. 
\end{remark}

\section{Uniform large Deviation Principle: Finite-dimensional Noise}\label{UDLP}\setcounter{equation}{0}
In this section, we first recall the definition of equicontinuous uniform Laplace principle (EULP)  (see \cite{MS}, for ULDP, see Subsection \ref{SULDP}). Later, we state a sufficient  condition under which  EULP holds. We start with the following additional information:

One should note that FWULDP depends on the choice of the Polish space $(\EE,d)$.	Recall that  a family $\B\subset \C_b(\EE)$ (space of bounded and continuous functions from $\EE$ to $\R$) of functions from $\EE\to\R$ is equibounded and equicontinuous if 
\begin{align*}
\sup_{g\in\B}\sup_{\psi\in\EE}|g(\psi)| <\infty \ \ \text{ and }\ \ \lim_{\delta\to0}\sup_{g\in\B}\sup_{\substack{\psi_1,\psi_2\in\EE,\\d(\psi_1,\psi_2)<\delta}}|g(\psi_1)-g(\psi_2)|=0,
\end{align*}
respectively. 
\begin{definition}[EULP]\label{def6.2}
Let $\mathscr{T}$ be the collection of subsets of  $\EE_0$ and $b(\e)$ be a function which converges to $0$ as $\e\to0$.  	A family of $\EE$-valued random variables $\{\mathrm{Y}_{\xi_0}^\e\}_{\e>0}$ is said to satisfy an \emph{equicontinuous uniform Laplace principle} with the speed $b(\e)$ and the rate function $\I_{\xi_0}(\cdot)$, uniformly over $\mathscr{T}$ if for any $\mathrm{E}\in\mathscr{T}$ and any equibounded and equicontinuous family $\B\subset\C_b(\EE)$, 
\begin{align}\label{6.1}
	\lim_{\e\to0}\sup_{g\in\B}\sup_{\xi_0\in\mathrm{E}}\bigg|b(\e)\log\E \bigg[\exp\bigg(-\frac{g(\mathrm{Y}_{\xi_0}^\e)}{b(\e)}\bigg)\bigg]+\inf_{h\in\EE}\big\{g(h)+\I_{\xi_0}(\psi)\big\}\bigg|=0.
\end{align}
\end{definition}
An equivalency between   ULDP and  EULP is established in \cite{MS}. 
\begin{proposition}[{\cite[Theorem 2.10]{MS}}]\label{thrm6.3}
EULP and FWULDP are equivalent.
\end{proposition}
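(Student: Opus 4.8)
The plan is to prove the two implications separately, recognising Proposition~\ref{thrm6.3} as a uniform, Freidlin--Wentzell version of the classical equivalence between a large deviation principle and a Laplace principle (Varadhan's lemma together with its inverse, Bryc's lemma). Throughout, the device that replaces the usual compactness hypotheses is the equicontinuity and equiboundedness of the test family $\B$: it lets every approximation (partition, modulus of continuity, ball radius) be chosen once and for all, independently of $g\in\B$, while the built-in uniformity of the FWULDP/EULP estimates delivers uniformity over $\xi_0\in\mathrm{E}$.

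\textbf{FWULDP $\Rightarrow$ EULP (uniform Varadhan).} Fix $\mathrm{E}\in\mathscr{T}$ and an equibounded, equicontinuous family $\B$ with $\sup_{g\in\B}\|g\|_\infty\le M$; after adding a constant we may assume $0\le g\le 2M$. To bound $b(\e)\log\E[e^{-g(\mathrm{Y}^{\e}_{\xi_0})/b(\e)}]$ from above by $-\inf_{h}\{g(h)+\I_{\xi_0}(h)\}$, I would first discard the contribution of $\{\,\mathrm{dist}_{\EE}(\mathrm{Y}^{\e}_{\xi_0},\Lambda_{\xi_0}(L))\ge\delta\,\}$, which by the FWULDP upper bound (part (2)) has exponential mass at most $e^{-L/b(\e)}$ and is therefore negligible for $L$ large since $g\ge 0$. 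On the remaining $\delta$-neighbourhood of the (good rate function) level set $\Lambda_{\xi_0}(L)$ I would use equicontinuity of $\B$ to split the region into finitely many pieces of oscillation $\le\rho$, estimate $\mu^\e$ of each piece by the FWULDP upper bound, and let $\e\to0$, then $\rho\to0$, $L\to\infty$; the number of pieces can be taken uniform in $g\in\B$ and $\xi_0\in\mathrm{E}$, which keeps the estimate uniform. For the matching lower bound I would pick a near-minimiser $h^\ast$ of $g+\I_{\xi_0}$, restrict the expectation to a small ball about $h^\ast$, and invoke the FWULDP lower bound (part (1)); equicontinuity makes $g$ nearly constant on that ball with a modulus independent of $g$. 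Combining the two yields \eqref{6.1}.

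\textbf{EULP $\Rightarrow$ FWULDP (uniform Bryc).} Here I would feed carefully chosen bounded Lipschitz functions into \eqref{6.1}. For the FWULDP upper bound, given $s\in[0,t_0]$ and $\delta>0$, set
\[
g_{\xi_0,s}(h)=M\Big(1-\tfrac1\delta\,\mathrm{dist}_{\EE}\big(h,\Lambda_{\xi_0}(s)\big)\Big)^{+},\qquad M\ge t_0,
\]
which vanishes on $\{\mathrm{dist}_{\EE}(\cdot,\Lambda_{\xi_0}(s))\ge\delta\}$, so that $\E[e^{-g_{\xi_0,s}(\mathrm{Y}^{\e}_{\xi_0})/b(\e)}]\ge\mu^\e(\mathrm{dist}_{\EE}(\mathrm{Y}^{\e}_{\xi_0},\Lambda_{\xi_0}(s))\ge\delta)$; since any $h$ with $\mathrm{dist}_{\EE}(h,\Lambda_{\xi_0}(s))>0$ has $\I_{\xi_0}(h)>s$, one checks $\inf_h\{g_{\xi_0,s}+\I_{\xi_0}\}\ge s$, and \eqref{6.1} turns this into $b(\e)\log\mu^\e(\mathrm{dist}_{\EE}(\mathrm{Y}^{\e}_{\xi_0},\Lambda_{\xi_0}(s))\ge\delta)+s\le o(1)$, uniformly in $\xi_0\in\mathrm{E}$ and $s\in[0,t_0]$ because the family $\{g_{\xi_0,s}\}$ is equibounded by $M$ and Lipschitz with constant $M/\delta$. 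For the FWULDP lower bound I would instead use functions vanishing on $\{d(\cdot,\psi)<\delta/2\}$ and equal to $M>t_0$ off $\{d(\cdot,\psi)<\delta\}$; then $\E[e^{-g/b(\e)}]\le\mu^\e(d(\mathrm{Y}^{\e}_{\xi_0},\psi)<\delta)+e^{-M/b(\e)}$ and $\inf_h\{g+\I_{\xi_0}\}\le\I_{\xi_0}(\psi)$, so \eqref{6.1} gives the lower estimate uniformly over $\psi\in\Lambda_{\xi_0}(t_0)$ and $\xi_0\in\mathrm{E}$.

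\textbf{Main obstacle.} Neither single-direction argument is hard in isolation; the delicate point is to preserve uniformity over the function family $\B$ and over the initial-data set $\mathrm{E}$ simultaneously, without any compactness of $\mathrm{E}$ or of $\EE$. In the Varadhan direction this forces the finite partition to be localised to neighbourhoods of the level sets $\Lambda_{\xi_0}(L)$ (compact, as $\I_{\xi_0}$ is a good rate function) with the tail discarded through the FWULDP upper bound, and one must verify that the cardinality of this partition and the thresholds $L,\rho$ can be chosen independently of $g$ and $\xi_0$. It is precisely the equicontinuity of $\B$ that supplies a single modulus of continuity making this possible, and the Freidlin--Wentzell level-set formulation---rather than the open/closed-set formulation---that lets the two principles be matched uniformly in the budget parameter $s$. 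The careful book-keeping ensuring the errors vanish uniformly as $\e\to0$, then $\rho\to0$, $L\to\infty$ (in that order) is the heart of the proof.
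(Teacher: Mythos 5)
The paper itself contains no proof of this proposition: it is imported verbatim from Salins \cite[Theorem 2.10]{MS}, and the essential feature of that theorem---the very reason the EULP was introduced---is that the equivalence holds with \emph{no} goodness assumption on the rate functions $\I_{\xi_0}$ and no compactness of the sets $\mathrm{E}\in\mathscr{T}$ or of the level sets $\Lambda_{\xi_0}(s)$. Measured against that target, your direction EULP $\Rightarrow$ FWULDP is correct and is in substance the cited argument: both test families you build are equibounded (by $M$) and Lipschitz with constant of order $M/\delta$, uniformly in $\xi_0\in\mathrm{E}$, $s\in[0,t_0]$ and $\psi\in\Lambda_{\xi_0}(t_0)$, so the EULP limit \eqref{6.1} applies to them uniformly; your verification that $\inf_{h}\{g_{\xi_0,s}(h)+\I_{\xi_0}(h)\}\geq s$ is also correct, since lower semicontinuity of $\I_{\xi_0}$ makes $\Lambda_{\xi_0}(s)$ closed and any $h\notin\Lambda_{\xi_0}(s)$ has $\I_{\xi_0}(h)>s$.

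The genuine gap is in the upper-bound half of FWULDP $\Rightarrow$ EULP. You localise to a $\delta$-neighbourhood of $\Lambda_{\xi_0}(L)$, invoke compactness of this set ``as $\I_{\xi_0}$ is a good rate function'', cover it by finitely many pieces of small oscillation, and assert that the number of pieces ``can be taken uniform in $g\in\B$ and $\xi_0\in\mathrm{E}$''. This fails on three counts. First, goodness of $\I_{\xi_0}$ is not among the hypotheses, so compactness of $\Lambda_{\xi_0}(L)$ is unavailable. Second, even granting goodness for each fixed $\xi_0$, the covering number of $\Lambda_{\xi_0}(L)$ depends on $\xi_0$, and since $\mathrm{E}$ need not be compact nothing bounds these covering numbers over $\xi_0\in\mathrm{E}$; the error $b(\e)\log N(\xi_0,\rho)$ then need not vanish uniformly---precisely the pathology that makes the compactness-based formulations of the ULDP inequivalent to the FWULDP, for which Salins' paper gives counterexamples. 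Third, part (2) of Definition \ref{def6.1} bounds only probabilities of the form $\P\big(\mathrm{dist}_{\EE}(\mathrm{Y}_{\xi_0}^\e,\Lambda_{\xi_0}(s))\geq\delta\big)$, not probabilities of balls, so ``estimate $\mu^\e$ of each piece'' requires a further, omitted, reduction. The compactness-free argument (Salins' actual proof) replaces the covering by a partition into level-set \emph{shells}: for a grid $s_k=k\eta$, $0\leq k\leq N$, with $N$ determined only by $\sup_{g\in\B}\sup_{\psi\in\EE}|g(\psi)|$ and $\eta$, split the sample space into $A_0=\{\mathrm{dist}_{\EE}(\mathrm{Y}_{\xi_0}^\e,\Lambda_{\xi_0}(s_1))<\delta\}$, $A_k=\{\mathrm{dist}_{\EE}(\mathrm{Y}_{\xi_0}^\e,\Lambda_{\xi_0}(s_k))\geq\delta\}\cap\{\mathrm{dist}_{\EE}(\mathrm{Y}_{\xi_0}^\e,\Lambda_{\xi_0}(s_{k+1}))<\delta\}$ for $1\leq k\leq N-1$, and $A_N=\{\mathrm{dist}_{\EE}(\mathrm{Y}_{\xi_0}^\e,\Lambda_{\xi_0}(s_N))\geq\delta\}$. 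On $A_k$ with $k<N$ the random variable is within $\delta$ of some $h$ with $\I_{\xi_0}(h)\leq s_{k+1}$, so equicontinuity of $\B$ gives $g(\mathrm{Y}_{\xi_0}^\e)\geq\inf_{h}\{g(h)+\I_{\xi_0}(h)\}-s_{k+1}-\eta$, while part (2) of the FWULDP gives $\P(A_k)\leq\exp(-(s_k-\eta)/b(\e))$ for small $\e$, uniformly over $\xi_0\in\mathrm{E}$ and $k$; summing the $N+1$ terms costs only an additive error $b(\e)\log(N+1)\to0$, uniformly, because $N$ depends on neither $g$ nor $\xi_0$. Your lower-bound half (restricting the expectation to a small ball around a near-minimiser of $g+\I_{\xi_0}$ and invoking part (1)) is fine, provided you note that the near-minimiser lies in $\Lambda_{\xi_0}(t_0)$ for a $t_0$ depending only on $\sup_{g\in\B}\sup_{\psi\in\EE}|g(\psi)|$, so that part (1) applies uniformly.
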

Let us provide a sufficient condition for a family of random variables to satisfy  EULP. Then by Proposition \ref{thrm6.3}, we obtain FWULDP. Let $\PP_2$ be the set of $\mathscr{F}_t$-adapted controls $v:[0,T]\to\R^n$ such that $\P\big(\|v\|_{\L^2(0,T;\R^n)}<\infty\big)=1$. For $M>0$, let $\PP_2^M\subset\PP_2$ be the set of admissible controls such that 
\begin{align*}
\PP_2^M:=\big\{v\in\PP_2:\P\big(\|v\|_{\L^2(0,T;\R^n)}\leq M\big)=1\big\}.
\end{align*}

\begin{condition}\label{cond}
Assume that there exists a family of measurable maps\\ $\mathscr{G}^\e:\C([0,T];\R^n)\to\EE$, indexed by $\e\in(0,1]$, $\xi_0\in\EE_0$. Let $\W(\cdot)$ be a $n$-dimensional Wiener process and $\mathrm{Y}_v^\e:=\mathscr{G}^\e\big(\xi_0,\sqrt{\e}\W+\int_0^{\cdot}v(s)\d s\big)$. Define $\mathscr{G}^0:\C([0,T];\R^n)\to\EE$, the limiting case of $\mathscr{G}^\e$ as $\e\to0$. Let $\mathscr{T}$ be collection  of the subset of $\EE_0$ such that for any $\mathrm{E}\in \mathscr{T}, \ M>0$ and $\delta>0$, \small{
	\begin{align}\label{6.2}
		\lim_{\e\to0}\sup_{\xi_0\in\mathrm{E}}\sup_{v\in\PP_2^M}\P\left\{d\bigg(\mathscr{G}^\e\bigg(\xi_0,\sqrt{\e}\W+\int_0^{\cdot}v(s\d s\bigg),\mathscr{G}^0\bigg(\xi_0,\int_0^{\cdot}v(s)\d s\bigg)\bigg)>\delta\right\}=0.
\end{align}}
\end{condition}
\begin{theorem}[{\cite[Theorem 2.13]{MS}}]\label{thrm6.4}
Assume that Condition \ref{cond} holds. Then, the family $\mathrm{Y}_{\xi_0}^\e:=\mathscr{G}^\e(\xi_0,\sqrt{\e}\W)$ satisfies the EULP uniformly over $\mathscr{T}$ with the speed $b(\e)=\e$, and the rate function $\I_{\xi_0}:\EE\to\R$,
\begin{align}\label{6.3}
	\I_{\xi_0}(h)=\inf\bigg\{\frac{1}{2}\int_0^T|v(s)|_{\R^n}^2\d s: \ h:=\mathscr{G}^0\bigg(\xi_0,\int_0^{\cdot}v(s)\d s\bigg)\bigg\}.
\end{align}
\end{theorem}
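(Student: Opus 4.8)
The statement is quoted from \cite[Theorem 2.13]{MS}, so in the paper it is invoked rather than reproved; nonetheless the plan is to derive the equicontinuous uniform Laplace principle from the Bou\'e--Dupuis variational representation together with the uniform convergence in probability furnished by Condition \ref{cond}, in the spirit of the weak convergence approach of \cite{ABPFD} adapted to the uniform setting. Write $\Lambda:=\sup_{g\in\B}\sup_{\psi\in\EE}|g(\psi)|<\infty$ for the common bound on the equibounded family $\B$ and $\omega_\B(\delta):=\sup_{g\in\B}\sup_{d(\psi_1,\psi_2)<\delta}|g(\psi_1)-g(\psi_2)|$ for its common modulus of continuity, so that $\omega_\B(\delta)\to0$ as $\delta\to0$. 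For $v\in\PP_2$ abbreviate $\mathrm{Y}_v^0:=\mathscr{G}^0(\xi_0,\int_0^\cdot v(s)\d s)$. First I would record the representation: since $\W$ is a finite-dimensional Wiener process, for bounded measurable $g$ the Bou\'e--Dupuis formula at speed $\e$ gives
\begin{align}\label{eqBD}
	-\e\log\E\bigg[\exp\bigg(-\frac{g(\mathrm{Y}_{\xi_0}^\e)}{\e}\bigg)\bigg]=\inf_{v\in\PP_2}\E\bigg[\frac{1}{2}\int_0^T|v(s)|_{\R^n}^2\d s+g(\mathrm{Y}_v^\e)\bigg].
\end{align}
Denoting the left side by $F^\e(g,\xi_0)$ and $\inf_{h\in\EE}\{g(h)+\I_{\xi_0}(h)\}$ by $F^0(g,\xi_0)$, the assertion \eqref{6.1} is exactly $\lim_{\e\to0}\sup_{g\in\B}\sup_{\xi_0\in\mathrm{E}}|F^\e-F^0|=0$, which I would obtain from matching bounds made uniform in $(g,\xi_0)$.

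For the upper bound $\limsup_{\e\to0}\sup_{g,\xi_0}(F^\e-F^0)\le0$, fix $\kappa>0$ and, for each $(g,\xi_0)$, select a \emph{deterministic} control $v^\ast\in\L^2(0,T;\R^n)$ that is $\kappa$-optimal for $F^0$, i.e. $\frac12\|v^\ast\|_{\L^2}^2+g(\mathrm{Y}_{v^\ast}^0)\le F^0(g,\xi_0)+\kappa$. Since $F^0\le\Lambda$ (take the null control in the definition of $\I_{\xi_0}$) and $g\ge-\Lambda$, one gets $\|v^\ast\|_{\L^2}^2\le4\Lambda+2\kappa$, a bound independent of $(g,\xi_0)$, so $v^\ast\in\PP_2^M$ with $M=M(\Lambda,\kappa)$. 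Feeding $v^\ast$ into \eqref{eqBD} gives $F^\e(g,\xi_0)\le\frac12\|v^\ast\|_{\L^2}^2+\E[g(\mathrm{Y}_{v^\ast}^\e)]$, and splitting $\E[g(\mathrm{Y}_{v^\ast}^\e)-g(\mathrm{Y}_{v^\ast}^0)]$ over $\{d(\mathrm{Y}_{v^\ast}^\e,\mathrm{Y}_{v^\ast}^0)<\delta\}$ and its complement bounds it by $\omega_\B(\delta)+2\Lambda\,\P(d(\mathrm{Y}_{v^\ast}^\e,\mathrm{Y}_{v^\ast}^0)\ge\delta)$. Condition \ref{cond} drives this probability to $0$ uniformly over $\xi_0\in\mathrm{E}$ and $v^\ast\in\PP_2^M$; sending $\e\to0$, then $\delta\to0$, then $\kappa\to0$ yields the upper bound.

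For the lower bound $\liminf_{\e\to0}\inf_{g,\xi_0}(F^\e-F^0)\ge0$, for each $\e$ pick a (random) $v^\e\in\PP_2$ that is $\e$-nearly optimal in \eqref{eqBD}. Because $F^\e\le\Lambda$ and $g\ge-\Lambda$, one gets $\tfrac12\E\|v^\e\|_{\L^2}^2\le2\Lambda+\e$; a localization via the stopping time $\sigma^\e:=\inf\{t:\int_0^t|v^\e(s)|_{\R^n}^2\d s\ge M^2\}$ then replaces $v^\e$ by an element of $\PP_2^M$ while altering \eqref{eqBD} by an amount tending to $0$ as $M\to\infty$, uniformly, since the discarded event carries cost at least $\tfrac12M^2-\Lambda$. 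On $\PP_2^M$, Condition \ref{cond} again lets me replace $\mathrm{Y}_{v^\e}^\e$ by $\mathrm{Y}_{v^\e}^0$ up to $\omega_\B(\delta)+2\Lambda\,\P(\cdots)$, giving $F^\e(g,\xi_0)\ge\E[\tfrac12\|v^\e\|_{\L^2}^2+g(\mathrm{Y}_{v^\e}^0)]-o(1)$. For $\P$-a.e.\ $\omega$ the path $v^\e(\omega)$ is a deterministic control, so $\tfrac12\|v^\e(\omega)\|_{\L^2}^2+g(\mathscr{G}^0(\xi_0,\int_0^\cdot v^\e(\omega)\d s))\ge F^0(g,\xi_0)$ by the very definition of $\I_{\xi_0}$; taking expectations and letting $\e\to0$, $\delta\to0$, $M\to\infty$ completes \eqref{6.1}. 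The EULP then yields the FWULDP by Proposition \ref{thrm6.3}, and the rate function \eqref{6.3} is read off from the variational problem defining $F^0$.

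The principal obstacle is the uniformity, not any single estimate. In the classical fixed-datum argument one passes to the limit in the controlled processes via weak convergence, which needs (pre)compactness of the control set and is hopeless uniformly over a non-compact $\mathrm{E}\subset\EE_0$. What rescues the argument is precisely Condition \ref{cond}: its uniform-in-$(\xi_0,v)$ convergence in probability of $\mathscr{G}^\e$ to $\mathscr{G}^0$ lets me discard the random perturbation $\sqrt{\e}\W$ with an error independent of $\xi_0\in\mathrm{E}$, while the equiboundedness and equicontinuity of $\B$ convert that convergence into convergence of the Laplace functionals that is uniform over $g\in\B$. Verifying Condition \ref{cond} for the vorticity system \eqref{FD2} is where the real work of the later results (Theorems \ref{thrm6.7} and \ref{thrm6.8}) lies.
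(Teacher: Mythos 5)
Your proposal is correct, and it reconstructs essentially the proof of the cited source: the paper itself gives no argument for this statement (it is imported verbatim from \cite[Theorem 2.13]{MS}), and your route via the Bou\'e--Dupuis variational representation at speed $\e$ — with the uniform upper bound from near-optimal deterministic controls, the uniform lower bound from truncated near-optimal random controls, and both made uniform in $(g,\xi_0)$ through Condition \ref{cond} together with the equiboundedness and equicontinuity of $\B$ — is the same weak-convergence argument used in that reference. No gaps.
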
If we verify Condition \ref{cond}, then by Theorem \ref{thrm6.4}, the family $\{\mathrm{Y}_{\xi_0}^\e\}_{\e>0}$ satisfies the EULP and by equivalency we obtain that the family $\{\mathrm{Y}_{\xi_0}^\e\}_{\e>0}$ satisfies ULDP. Thus, our main aim is to verify Condition \ref{cond}.
\subsection{Controlled and skeleton equation} The unique mild solution to the system \eqref{FD1} is a probabilistically strong solution, since \eqref{S3} is satisfied and  an estimate similar to \eqref{FD18} can be established  for $\xi^{\e}$. Let us fix $\EE=\C([0,T];\L^p({\1}))$, $\EE_0=\L^p({\1})$ and the solution map of the integral equation \eqref{FD2} be represented by $\xi^\e=\mathscr{G}^\e(\xi_0,\sqrt{\e}\W)$.

Let us denote the solution $\xi_v^\e(t,\x):=\mathscr{G}^\e\big(\xi_0,\sqrt{\e}\W(t)+\int_0^{t}v(s)\d s\big)$ of the following stochastic controlled integral (SCI) equation
\begin{align}\label{6.5}\nonumber
\xi_v^\e(t,\x)&=\int_{\1} G(t,\x,\y)\xi_0(\y)\d\y+\int_0^t\int_{\1}\nabla_{\y} G(t-s,\x,\y) \cdot \q(\xi_v^\e(s,\cdot))(\y)\d\y\d s\\&\nonumber\quad +\sqrt{\e}\sum_{j=1}^{n}\int_0^t\int_{\1} G(t-s,\x,\y)\sigma_j(s,\y,\xi_v^\e(s,\y))\d \y\d\W^j(s)\\&\quad +\sum_{j=1}^{n}\int_0^t\int_{\1} G(t-s,\x,\y)\sigma_j(s,\y,\xi_v^\e(s,\y))v_j(s)\d \y\d s,\  \P\text{-a.s.},
\end{align}for all $t\in[0,T]$. It we choose $\e=0$, then the above SCI equation changes to skeleton equation (limiting case). Let us denote the solution $\xi_v^0(t,\x):=\mathscr{G}^0\big(\xi_0,\int_0^{t}v(s)\d s\big)$ of the following skeleton equation: 
\begin{align}\label{6.6}\nonumber
\xi_v^0(t,\x)&=\int_{\1} G(t,\x,\y)\xi_0(\y)\d\y+\int_0^t\int_{\1}\nabla_{\y} G(t-s,\x,\y) \cdot \q(\xi_v^0(s,\cdot))(\y)\d\y\d s\\&\quad +\sum_{j=1}^{n}\int_0^t\int_{\1} G(t-s,\x,\y)\sigma_j(s,\y,\xi_v^0(s,\y))v_j(s)\d \y\d s, \  \text{ for all } \ t\in[0,T].
\end{align}For $\psi\in\C([0,T];\L^p({\1}))$, we define the rate function
\begin{align}\label{6.7}
\I_{\xi_0}(\psi):=\inf_{\big\{v\in\L^2(0,T;\R^n):\psi=\mathscr{G}^0\big(\xi_0,\int_0^{\cdot}v(s)\d s\big)\big\}}\bigg\{\frac{1}{2}\int_0^T|v(s)|^2_{\R^n}\d s\bigg\},
\end{align}where $\psi$ satisfies the skeleton equation \eqref{6.6}.

The following results discusses the existence and uniqueness of solution to the above SCI and skeleton equations.
\begin{theorem}\label{thrm6.5}
Assume that Hypothesis \ref{hyp1} holds and $v\in\PP_2^M$. Then, for any initial data $\xi_0\in\L^p({\1})$, for $p>2$, there exists a unique solution $\xi_v^\e\in\C([0,T];\L^p({\1}))$ of the SCI equation \eqref{6.5}.
\end{theorem}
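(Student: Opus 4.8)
The plan is to mirror the localization and fixed-point scheme used for Theorem~\ref{thrmex}, treating the controlled equation \eqref{6.5} as the original integral equation \eqref{FD2} augmented by the deterministic control drift
$$\mathcal{A}_4^v\xi(t,\x):=\sum_{j=1}^{n}\int_0^t\int_{\1}G(t-s,\x,\y)\sigma_j(s,\y,\xi(s,\y))v_j(s)\,\d\y\,\d s.$$
First I would introduce the truncated version of \eqref{6.5} obtained by inserting the cut-off $\Pi_R(\|\cdot\|_{\L^p})$ from \eqref{FD4} into the nonlinear term, set up the solution operator $\mathcal{A}^v=\mathcal{A}_1+\mathcal{A}_2+\sqrt{\e}\,\mathcal{A}_3+\mathcal{A}_4^v$ in the manner of \eqref{FD7}, and run the Banach fixed-point argument of Proposition~\ref{prop5.3} in the weighted space $\mathscr{H}$.

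For the well-definedness of $\mathcal{A}^v$ on $\mathscr{H}$, the three terms $\mathcal{A}_1,\mathcal{A}_2,\sqrt{\e}\,\mathcal{A}_3$ are bounded exactly as in Proposition~\ref{prop5.3}, using \eqref{R2} and Lemma~\ref{lem5.2} for the drift and the Burkholder--Davis--Gundy inequality with Hypothesis~\ref{hyp1} for the stochastic convolution. The new control term is estimated with $\|G(t-s)\|_{\L^1}=1$, the linear growth bound \eqref{FD3}, and the Cauchy--Schwarz inequality in time, which turns the temporal integral against $v_j$ into the factor $\|v\|_{\L^2(0,T;\R^n)}\le M$; this gives $\|\mathcal{A}_4^v\xi\|_{\mathscr{H}}<\infty$ uniformly over $v\in\PP_2^M$. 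For the contraction estimate, the Lipschitz bound \eqref{FD3} yields
$$\|\mathcal{A}_4^v\xi_1(t)-\mathcal{A}_4^v\xi_2(t)\|_{\L^p}\le L\sqrt{n}\int_0^t|v(s)|_{\R^n}\|\xi_1(s)-\xi_2(s)\|_{\L^p}\,\d s,$$
and, proceeding exactly as in the derivation of \eqref{FD17}, the weighted norm produces the factor $\big(\tfrac{1}{2\lambda}(1-e^{-2\lambda T})\big)^{p/2}M^p$, which tends to $0$ as $\lambda\to\infty$. Thus $\mathcal{A}^v$ is a contraction on $\mathscr{H}$ once $\lambda$ is enlarged relative to \eqref{417} to absorb this control contribution, giving a unique solution of the truncated controlled equation.

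To pass from the truncated to the genuine equation \eqref{6.5}, I would establish an energy estimate analogous to Lemma~\ref{lem5.4}. Applying It\^o's formula to $\|\cdot\|_{\L^p}^p$, the advection term vanishes by the divergence-free property and the terms $I_1$--$I_4$ are handled as in Lemma~\ref{lem5.4}; the control drift contributes
$$I_5:=p\int_0^t\int_{\1}|\xi|^{p-2}\xi\sum_{j=1}^{n}\sigma_j(s,\y,\xi)v_j(s)\,\d\y\,\d s,$$
which, after H\"older's and Young's inequalities, is bounded by $C\int_0^t|v(s)|_{\R^n}\big(1+\|\xi(s)\|_{\L^p}^p\big)\,\d s$. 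This is a Gronwall term with the \emph{time-dependent} coefficient $g(s)=C|v(s)|_{\R^n}$, which is integrable since $\|g\|_{\L^1(0,T)}\le C\sqrt{T}\,\|v\|_{\L^2(0,T;\R^n)}\le C\sqrt{T}M$; the Gronwall inequality with an $\L^1$-in-time coefficient then yields an estimate analogous to \eqref{FD18} whose exponential factor $\exp(C\sqrt{T}M)$ is uniform over $v\in\PP_2^M$. Finally, uniqueness follows from a stopping-time comparison as in Theorem~\ref{thrmex}, and the removal of the truncation together with global existence follows from the tightness of the three integral operators in $\C([0,T];\L^p(\1))$ and Skorokhod's representation theorem, exactly as in that proof.

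The main obstacle I anticipate is the control drift: because $v$ lies only in $\L^2(0,T;\R^n)$ and not in $\L^\infty$, one cannot bound this term by a constant-in-time multiple of a power of $\|\xi\|_{\L^p}$. The resolution, used in both the contraction and the energy estimate, is to keep $|v(s)|_{\R^n}$ inside the time integral and exploit that $\PP_2^M$ furnishes a uniform $\L^2$---hence $\L^1$---bound on $v$; this both delivers the vanishing contraction factor and makes the Gronwall exponential uniform over admissible controls, which is precisely the uniformity one needs later to verify Condition~\ref{cond}.
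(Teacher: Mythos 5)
Your proposal is correct in substance, but it takes a genuinely different route from the paper. The paper proves Theorem \ref{thrm6.5} in a few lines by a Girsanov change of measure: using \eqref{6.9} (Novikov's condition holds automatically since $\|v\|_{\L^2(0,T;\R^n)}\leq M$ $\P$-a.s.), the process $\vi{\W}(t)=\W(t)+\tfrac{1}{\sqrt{\e}}\int_0^t v(s)\,\d s$ is a Wiener process under an equivalent measure $\vi{\P}$, so the SCI equation \eqref{6.5} is exactly the uncontrolled equation \eqref{FD2} driven by $\sqrt{\e}\,\vi{\W}$; Theorem \ref{thrmex} then gives existence and uniqueness under $\vi{\P}$, and equivalence of the measures transfers both back to $\P$. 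You instead rerun the entire constructive machinery (truncation, fixed point in $\mathscr{H}$, energy estimate, Skorokhod) with the control drift $\mathcal{A}_4^v$ as a fourth operator. Your key estimates are sound: Cauchy--Schwarz in time together with Minkowski's integral inequality does produce a contraction factor of order $\bigl(\tfrac{1}{2\lambda}\bigr)^{p/2}M^p$ that vanishes as $\lambda\to\infty$, and the energy estimate closes because the Gronwall coefficient $C|v(s)|_{\R^n}$ has a deterministic $\L^1(0,T)$ bound $C\sqrt{T}M$ uniformly over $\PP_2^M$. The one point you gloss over is the interplay between the random Gronwall coefficient and the expectation/BDG step: since $v$ is random, you must apply Gronwall pathwise first (exploiting the a.s. bound $\int_0^T|v(s)|\,\d s\leq\sqrt{T}M$) and only then take expectations and treat the martingale term by BDG and Young's inequality, as in \eqref{FD20}; applying Gronwall after taking expectations would not be legitimate. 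What the two approaches buy: the paper's Girsanov reduction is far shorter and reuses Theorem \ref{thrmex} wholesale, while your direct argument is heavier but delivers, as a byproduct, precisely the quantitative bound uniform over $v\in\PP_2^M$ that the paper must anyway establish separately as Proposition \ref{prop6.9}; in that sense your route consolidates Theorem \ref{thrm6.5} and Proposition \ref{prop6.9} into a single argument.
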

\begin{proof}
The main ingredient of this proof is the Girsanov's theorem (see \cite[Theorem 10.14]{DaZ}).

For any given $v\in\PP_2^M$, we define 
\begin{align}\label{6.9}
	\frac{\d \vi{\P}}{\d\P}:=\exp\bigg\{-\frac{1}{\sqrt{\e}}\int_0^Tv(s)\d\W(s)-\frac{1}{2\e}\int_0^T|v(s)|_{\R^n}^2\d s\bigg\},
\end{align}where $\W(\cdot)$ is a finite-dimensional Wiener process. Then, the stochastic process 
\begin{align*}
	\vi{\W}(t):=\W(t)+\frac{1}{\sqrt{\e}}\int_0^tv(s)\d s, \ \ t\in[0,T],
\end{align*}is a finite-dimensional Wiener process under the probability measure $\vi{\P}$. Also, 
\begin{align*}
	\exp\bigg\{-\frac{1}{\sqrt{\e}}\int_0^Tv(s)\d\W(s)-\frac{1}{2\e}\int_0^T|v(s)|^2_{\R^n}\d s\bigg\},
\end{align*}is an exponential martingale and $\vi{\P}$ is another probability measure on the probability space $(\Omega,\mathscr{F},\{\mathscr{F}_t\}_{t\geq 0},\P)$ and $\vi{\P}$ is equivalent to the measure $\P$. Using Girsanov's theorem, we obtain hat $\vi{\W}(\cdot)$ is a real valued finite-dimensional Wiener process with respect to the measure $\P$. By Theorem \ref{thrmex}, we obtain the existence and uniqueness of the solution to the SCI equation \eqref{6.5} with respect to the new probability measure $\vi{\P}$. Hence, we conclude the well-posedness to the SCI equation \eqref{6.5} with respect to the probability measure $\P$ also.
\end{proof}
\begin{theorem}\label{thrm6.6}
Assume that Hypothesis \ref{hyp1} holds and $v\in\L^2(0,T;\R^n)$. Then, for any initial data $\xi_0\in\L^p({\1})$, for $p>2$, there exists a unique solution $\xi_v^0\in\C([0,T];\L^p({\1}))$, to the skeleton equation \eqref{6.6}.
\end{theorem}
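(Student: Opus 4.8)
The plan is to follow the truncation and fixed-point scheme already used for the stochastic problem in Proposition \ref{prop5.3} and Theorem \ref{thrmex}, adapted to the purely deterministic skeleton equation \eqref{6.6}. Since there is no stochastic integral here, the argument is simpler than that for the SCI equation; in particular, the Girsanov reduction exploited in Theorem \ref{thrm6.5} is unavailable, so I would argue directly via a contraction mapping followed by an a priori energy estimate to remove the truncation.

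First I would introduce the truncated skeleton equation, replacing $\q$ by $\q_R(\xi)=\Pi_R(\|\xi\|_{\L^p})\q(\xi)$ as in \eqref{FD6}, in order to tame the quadratic (hence non-globally-Lipschitz) nonlinearity. On the Banach space $\C([0,T];\L^p(\1))$, or a weighted variant with norm $\sup_t e^{-\lambda t}\|\cdot\|_{\L^p}$ (the deterministic analogue of $\mathscr{H}$), I would define the solution map as the sum of three operators matching the three terms on the right of \eqref{6.6}, and show it maps the space into itself and is a contraction for a suitable choice of parameters. For the first two terms the estimates are exactly those of Claims 1--2 in Proposition \ref{prop5.3}, using \eqref{2.10}, the regularizing bound \eqref{R2}, and the Lipschitz estimate \eqref{FD10} from Lemma \ref{lem5.2}. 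The genuinely new term is the control term $\sum_j\int_0^t\int_{\1}G(t-s,\x,\y)\sigma_j(s,\y,\xi(s,\y))v_j(s)\d\y\d s$; here I would combine the linear growth and Lipschitz bounds \eqref{FD3} on $\sigma_j$ with Young's convolution inequality and $\|G(s,\cdot,\cdot)\|_{\L^1}\leq C$, and absorb the time-integrability of the control through the Cauchy--Schwarz estimate $\int_0^t|v_j(s)|\,\d s\leq \sqrt{T}\,\|v\|_{\L^2(0,T;\R^n)}$. This yields both the self-mapping bound and a contraction factor controlled by $\|v\|_{\L^2}$ and the truncation level $R$.

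Having obtained a unique solution $\xi_{v,R}^0$ of the truncated equation, I would remove the truncation via a uniform a priori energy estimate independent of $R$. Testing \eqref{6.6} in its equivalent differential form against $|\xi_v^0|^{p-2}\xi_v^0$ and integrating over $\1$, the nonlinear term vanishes since $\u=\k*\xi$ is divergence-free, exactly as in \eqref{FD019} via \cite[Lemma 2.2]{HBBF}, while the control term is estimated by $C|v(s)|_{\R^n}(1+\|\xi_v^0(s)\|_{\L^p}^p)$ using \eqref{FD3} and H\"older's inequality. Gronwall's inequality together with $\int_0^T|v(s)|_{\R^n}\,\d s\leq \sqrt{T}\,\|v\|_{\L^2}<\infty$ then bounds $\sup_{t\in[0,T]}\|\xi_v^0(t)\|_{\L^p}^p$ in terms only of $\|\xi_0\|_{\L^p}$, $T$, and $\|v\|_{\L^2}$. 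Choosing $R$ larger than this a priori supremum deactivates the cutoff, so $\xi_{v,R}^0$ solves \eqref{6.6} globally on $[0,T]$; a consistency argument as in Remark \ref{rem4.6} makes this precise.

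Uniqueness would follow from a direct Gronwall argument on the difference of two solutions, using the Lipschitz estimate \eqref{FD10} for $\q$ restricted to the ball in which both solutions lie, the Lipschitz bound on $\sigma_j$ from \eqref{FD3}, and the heat-kernel bounds of Lemma \ref{lem25}. The main obstacle I anticipate is the control term: unlike its stochastic counterpart it is not a martingale and must be handled pathwise, so the only time-regularity available is that of $v\in\L^2(0,T;\R^n)$. Keeping all constants uniform in the truncation parameter while exploiting only this $\L^2$-in-time integrability, and verifying that the control term indeed defines an element of $\C([0,T];\L^p(\1))$ (which requires a dominated-convergence argument for continuity in $t$), is the delicate point of the proof.
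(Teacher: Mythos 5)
Your proposal is correct and follows essentially the same route as the paper: the paper's own proof consists of exactly the estimate you give for the control term (linear growth of $\sigma_j$ plus Cauchy--Schwarz in time against $\|v\|_{\L^2(0,T;\R^n)}$), after which it simply invokes ``similar arguments as in the proof of Theorem \ref{thrmex}, with some minor modification'' --- i.e.\ the truncation, fixed-point, and energy-estimate scheme you spell out in detail (with the correct simplification that the stochastic machinery of Skorokhod/tightness is not needed in the deterministic setting). Your write-up in effect supplies the details the paper leaves implicit, so there is nothing to correct.
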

\begin{proof}
Since $v\in\L^2(0,T;\R^n)$, and 
\begin{align*}
	&	\bigg|\sum_{j=1}^{n}\int_0^t\int_{\1} G(t-s,\x,\y)\sigma_j(s,\y,\xi_v^0(s,\y))v_j(s)\d y\d s\bigg| \\&\leq C \sum_{j=1}^{n}\int_0^t \int_{\1}\big(1+|\xi_v^0(s,\y)|\big)|v_j(s)|\d \y\d s \\&\leq C\sum_{j=1}^{n} \bigg(\int_0^t|v_j(s)|^2\d s\bigg)^\frac{1}{2}\bigg\{t^\frac{1}{2}+\bigg(\int_0^t\bigg(\int_{\1}|\xi_v^0(s,\y)|\d\y\bigg)^2\d s\bigg)^\frac{1}{2}\bigg\}
	\\&\leq C\|v\|_{\L^2(0,T;\R^n)}\bigg\{T^\frac{1}{2}+T^{\frac{1}{4}}\bigg(\int_0^T\|\xi_v^0(s)\|_{\L^2}^2\d s\bigg)^\frac{1}{4}\bigg\}<\infty,
\end{align*}the well-posedness of the skeleton equation \eqref{6.6} can be shown by using similar arguments as in the proof of Theorem \ref{thrmex}, with some minor modification in the proof.
\end{proof}

\begin{remark}Let us assume that $\xi_0\in\C(\1)$. Using the well-posedness established in Section \ref{FD}, and \cite[Lemma 3.1, Corollary 3.2 and Proposition 3.5]{IGCR}, we conclude that the solution $\xi_v^\e$ of the integral equation \eqref{6.5}  has a modification which is a space-times continuous process. Moreover,  the solution $\xi_v^0$ of the integral equation  \eqref{6.6} has a version which belongs to the space $\C([0,T]\times\1)$.
\end{remark}
\subsection{Main Theorems} In this subsection, we establish that the family of solutions $\xi^\e$ of the integral equation \eqref{FD02} satisfies ULDP in two different topologies $\C([0,T];\L^p({\1}))$, for $p>2$ and $\C([0,T]\times{\1})$.
\begin{theorem}[ULDP in {$\C([0,T];\L^p({\1}))$}]\label{thrm6.7}
Let $p>2$. The family $\{\xi_v^\e\}_{\e>0}$ satisfies the ULDP on $\C([0,T];\L^p({\1}))$, with rate function $\I_{\xi_0}(\cdot)$ given by \eqref{6.7}, where the uniformity is over $\L^p({\1})$-bounded sets of initial conditions, if the following hold:
\begin{enumerate}
	\item For any $N>0$, $t_0\geq 0$, and $\delta>0$
	\begin{align*}
		\liminf_{\e\to0}\inf_{\|\xi_0\|_{\L^p}\leq N}\inf_{\psi\in\Lambda_{\xi_0}(t_0)}\bigg\{\e\log\P\big(\|\xi_v^\e-\psi\|_{\C([0,T];\L^p({\1}))}<\delta\big)+\I_{\xi_0}(\psi)\bigg\}\geq 0.
	\end{align*}
	\item For any $N>0$, $t_0\geq 0$, and $\delta>0$
	\begin{align*}
		\limsup_{\e\to0}\sup_{\|\xi_0\|_{\L^p}\leq N}\sup_{s\in[0,t_0]}\bigg\{\e\log\P\big(\mathrm{dist}_{\C([0,T];\L^p({\1}))}(\xi_v^\e,\Lambda_{\xi_0(s)})>\delta\big)+s\bigg\}\leq 0.
	\end{align*}
	
\end{enumerate}
\end{theorem}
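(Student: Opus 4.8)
The plan is to observe that the two displayed inequalities (1)--(2) in the statement are precisely the lower and upper Freidlin--Wentzell bounds of Definition \ref{def6.1}, specialized to $\EE=\C([0,T];\L^p(\1))$, $\EE_0=\L^p(\1)$, speed $b(\e)=\e$, and to the collection $\mathscr{T}$ of all subsets of $\L^p(\1)$ that are bounded in the $\L^p(\1)$-norm, applied to the random variables $\mathscr{G}^\e(\xi_0,\sqrt{\e}\W)$. By the equivalence recorded in Proposition \ref{thrm6.3}, proving (1)--(2) is the same as proving the EULP of Definition \ref{def6.2} over $\mathscr{T}$. In turn, Theorem \ref{thrm6.4} guarantees the EULP with speed $\e$ and rate function \eqref{6.3}---which coincides verbatim with \eqref{6.7}---as soon as Condition \ref{cond} is verified. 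Thus I would reduce the entire proof to the verification of the uniform convergence in probability \eqref{6.2}, the well-posedness of the objects in it being supplied by Theorems \ref{thrm6.5} and \ref{thrm6.6}.

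In the present setting \eqref{6.2} asks, for every $N>0$, $M>0$, $\delta>0$, that
\begin{align*}
\lim_{\e\to0}\sup_{\|\xi_0\|_{\L^p}\leq N}\sup_{v\in\PP_2^M}\P\Big\{\|\xi_v^\e-\xi_v^0\|_{\C([0,T];\L^p(\1))}>\delta\Big\}=0,
\end{align*}
where $\xi_v^\e$ solves the SCI equation \eqref{6.5} and $\xi_v^0$ the skeleton equation \eqref{6.6}. First I would secure a priori bounds uniform in $\e\in(0,1]$, in $\|\xi_0\|_{\L^p}\leq N$, and in $v\in\PP_2^M$: an energy estimate of the type \eqref{FD18} for $\xi_v^\e$, the extra controlled drift being absorbed via the linear growth of $\sigma_j$ in Hypothesis \ref{hyp1} together with the constraint $\|v\|_{\L^2(0,T;\R^n)}\leq M$, and a corresponding deterministic bound for $\xi_v^0$. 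These confine both solutions to a ball whose radius depends only on $N,M,T$, on which the quadratic nonlinearity $\q$ is Lipschitz by Lemma \ref{lem5.2}.

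Next I would write the mild equation for the difference $w_v^\e:=\xi_v^\e-\xi_v^0$, which has three contributions: the nonlinear drift difference $\int_0^t\int_{\1}\nabla_{\y}G(t-s,\cdot,\y)\cdot[\q(\xi_v^\e)-\q(\xi_v^0)](s,\y)\,\d\y\,\d s$, the genuinely stochastic term $\sqrt{\e}\sum_{j=1}^n\int_0^t\int_{\1}G(t-s,\cdot,\y)\sigma_j(s,\y,\xi_v^\e)\,\d\y\,\d\W^j(s)$, and the control-difference term $\sum_{j=1}^n\int_0^t\int_{\1}G(t-s,\cdot,\y)[\sigma_j(s,\y,\xi_v^\e)-\sigma_j(s,\y,\xi_v^0)]v_j(s)\,\d\y\,\d s$. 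Taking $\E[\sup_t\|\cdot\|_{\L^p}^p]$, I would bound the first term by the smoothing estimate \eqref{R2} of Lemma \ref{lem25}(1) (with $\beta=1$, $\alpha=p$) combined with the Lipschitz bound \eqref{FD10}, producing a singular integral $\int_0^t(t-s)^{-1/2}\E\|w_v^\e(s)\|_{\L^p}^p\,\d s$; the stochastic term by the Burkholder--Davis--Gundy and Young inequalities exactly as in \eqref{FD15}, yielding a prefactor $\e^{p/2}C(N,M,T)$; and the control term by Hölder's inequality in time using $\|v\|_{\L^2}\leq M$, the $\L^1$-bound on $G$, and the Lipschitz property \eqref{FD3} of $\sigma_j$. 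Collecting these and invoking a Gronwall--Henry argument adapted to the $(t-s)^{-1/2}$ singularity, I would obtain $\sup_{\|\xi_0\|\leq N}\sup_{v\in\PP_2^M}\E[\sup_{t\in[0,T]}\|w_v^\e(t)\|_{\L^p}^p]\leq C(N,M,T)\,\e^{p/2}$, after which Markov's inequality converts this into \eqref{6.2} and completes the verification.

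The hard part will be exactly this uniform-in-$(\xi_0,v)$ control of $w_v^\e$ in the presence of the quadratic, only locally Lipschitz nonlinearity $\q$ and the singular gradient heat kernel. Since $\mathscr{T}$ and $\PP_2^M$ are not compact, the uniformity cannot come from compactness; one must first establish the $\e$-, $\xi_0$- and $v$-uniform energy bounds and only then apply the Lipschitz estimate of $\q$ on the resulting ball, while the singular kernel $(t-s)^{-1/2}$ forces a Henry-type rather than a classical Gronwall inequality. A secondary subtlety is that $v$ enters $\xi_v^\e$ both through the added drift and implicitly through the integrand $\sigma_j(\cdot,\xi_v^\e)$ of the stochastic term; it is the explicit prefactor $\sqrt{\e}$ in \eqref{6.5}, once the uniform moment bounds are in force, that makes this term vanish uniformly over $v\in\PP_2^M$ as $\e\to0$.
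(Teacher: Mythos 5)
Your reduction of the theorem to the verification of Condition \ref{cond} via Theorem \ref{thrm6.4} and Proposition \ref{thrm6.3} is exactly the paper's route (its proof reads: combine Theorem \ref{thrm6.4} with Corollary \ref{cor6.11}), and your three-term decomposition of $\xi_v^\e-\xi_v^0$ matches \eqref{6.13}. However, your proposed verification of the uniform convergence \eqref{6.2} has a genuine gap. The a priori estimate of Proposition \ref{prop6.9} (the analogue of \eqref{FD18}) is a bound on $\E\big[\sup_{t\in[0,T]}\|\xi_v^\e(t)\|_{\L^p}^p\big]$, i.e.\ a moment bound; it does \emph{not} confine the paths of the stochastic solution $\xi_v^\e$ to a deterministic ball of radius $C(N,M,T)$. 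Consequently $\q$ cannot be treated as Lipschitz ``on the resulting ball'': in the drift estimate the Lipschitz factor is the random variable $\sup_{s}\big(\|\xi_v^\e(s)\|_{\L^p}+\|\xi_v^0(s)\|_{\L^p}\big)$, and your plan to take $\E\big[\sup_t\|\cdot\|_{\L^p}^p\big]$ and close a Gronwall--Henry loop fails at exactly this point: the right-hand side contains the expectation of a \emph{product} of this random prefactor with $\|w_v^\e\|_{\L^p}^p$, and decoupling it by H\"older produces higher moments of $w_v^\e$ than appear on the left, so the inequality cannot be iterated. This is why the paper does not prove a moment bound $\E[\sup_t\|w_v^\e(t)\|_{\L^p}^p]\le C\e^{p/2}$ at all; instead it localizes on the event $\Upsilon_P^\e$ of \eqref{6.15} (on which both solutions \emph{are} pathwise bounded by $P$), runs Gronwall pathwise on that event as in \eqref{6.16}--\eqref{6.18}, and controls $1-\P(\Upsilon_P^\e)\le\varrho$ uniformly in $(\e,\xi_0,v)$ by Proposition \ref{prop6.9} and Markov's inequality \eqref{6.19} --- obtaining convergence in probability directly, with no moment convergence.

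A second, related flaw: you claim the stochastic convolution term can be bounded ``exactly as in \eqref{FD15}, yielding a prefactor $\e^{p/2}C(N,M,T)$'' inside $\E[\sup_t\,\cdot\,]$. But \eqref{FD15} is a fixed-$t$ estimate; the stochastic convolution $I_{3,\e}$ is not a martingale in $t$, so BDG does not directly control its supremum, and a factorization-type argument would be needed, which you do not supply. The paper sidesteps this entirely: it uses the linear growth bound \eqref{6.21} and the tightness of $\{I_{3,\e}\}_{\e\in(0,1]}$ in $\C([0,T];\L^p(\1))$ from \cite[Corollary 3.6]{IGCR}, then Prokhorov's theorem, so that $\sqrt{\e}\sup_t\|I_{3,\e}(t)\|_{\L^p}\chi_{\Upsilon_P^\e}\Rightarrow 0$ and hence converges to $0$ in probability, uniformly over $\|\xi_0\|_{\L^p}\le N$ and $v\in\PP_2^M$. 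To repair your argument you would either have to adopt the paper's localization-plus-tightness scheme, or replace your expectation Gronwall by a stopping-time/indicator localization and justify the sup-moment of the stochastic convolution by factorization; as written, the proposal's central estimate cannot be established.
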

\begin{theorem}[ULDP in {$\C([0,T]\times{\1})$}]\label{thrm6.8}
The family $\{\xi_v^\e\}_{\e>0}$ satisfies the ULDP on $\C([0,T]\times{\1})$, with rate function $\I_{\xi_0}(\cdot)$ given by \eqref{6.7}, where the uniformity is over $\C({\1})$-bounded sets of initial conditions, if the following hold:
\begin{enumerate}
	\item For any $N>0$, $t_0\geq 0$, and $\delta>0$
	\begin{align*}
		\liminf_{\e\to0}\inf_{\|\xi_0\|_{\L^p}\leq N}\inf_{\psi\in\Lambda_{\xi_0}(t_0)}\bigg\{\e\log\P\big(\|\xi_v^\e-\psi\|_{\C([0,T]\times{\1})}<\delta\big)+\I_{\xi_0}(\psi)\bigg\}\geq 0.
	\end{align*}
	\item For any $N>0$, $t_0\geq 0$, and $\delta>0$
	\begin{align*}
		\limsup_{\e\to0}\sup_{\|\xi_0\|_{\L^p}\leq N}\sup_{s\in[0,t_0]}\bigg\{\e\log\P\big(\mathrm{dist}_{\C([0,T]\times{\1})}(\xi_v^\e,\Lambda_{\xi_0(s)})>\delta\big)+s\bigg\}\leq 0.
	\end{align*}		
\end{enumerate}
\end{theorem}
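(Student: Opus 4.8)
The plan is to prove Theorem \ref{thrm6.8} by the same route used for Theorem \ref{thrm6.7}: verify the sufficient Condition \ref{cond}, but now in the finer metric of $\C([0,T]\times\1)$, and then invoke Theorem \ref{thrm6.4} (which upgrades Condition \ref{cond} to the EULP) together with Proposition \ref{thrm6.3} (EULP $\Leftrightarrow$ FWULDP). With $\EE=\C([0,T]\times\1)$, $\EE_0=\C(\1)$, and $d$ the supremum metric, the FWULDP so obtained is exactly the pair of bounds (1)--(2) in the statement, with rate function \eqref{6.7}. Hence it suffices to prove, for every $N>0$, $M>0$ and $\delta>0$,
\begin{align*}
\lim_{\e\to0}\sup_{\|\xi_0\|_{\C(\1)}\leq N}\sup_{v\in\PP_2^M}\P\big(\|\xi_v^\e-\xi_v^0\|_{\C([0,T]\times\1)}>\delta\big)=0,
\end{align*}
where $\xi_v^\e$ and $\xi_v^0$ solve the SCI equation \eqref{6.5} and the skeleton equation \eqref{6.6}, respectively. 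The tightness required to run the argument is furnished by Lemma \ref{lem6.12}, and the convergence itself is the content of Theorem \ref{thrm6.13}.

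The key idea is to bootstrap from the $\C([0,T];\L^p(\1))$ convergence, which is already available for $p>2$ from the analysis behind Theorem \ref{thrm6.7} (namely Corollary \ref{cor6.11}), up to the space-time uniform convergence, using the smoothing action of the heat kernel recorded in Lemma \ref{lem25}(2). I would fix $p>4$ and $\gamma>\frac{2p}{p-2}$; since $\xi_0\in\C(\1)\subset\L^p(\1)$ for every such $p$, this entails no loss. Subtracting \eqref{6.6} from \eqref{6.5} the common initial-data term cancels, leaving
\begin{align*}
\xi_v^\e-\xi_v^0 = J\big[\q(\xi_v^\e)-\q(\xi_v^0)\big] + \sqrt{\e}\,\Sigma^\e + \Phi^\e,
\end{align*}
where $J$ is the operator \eqref{R1}, $\Sigma^\e(t,\x)=\sum_{j=1}^n\int_0^t\int_{\1}G(t-s,\x,\y)\sigma_j(s,\y,\xi_v^\e(s,\y))\d\y\d\W^j(s)$, and $\Phi^\e$ is the convolution of $G$ against $\sum_{j}\big[\sigma_j(\cdot,\cdot,\xi_v^\e)-\sigma_j(\cdot,\cdot,\xi_v^0)\big]v_j$.

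For the nonlinear term I would use the bilinear splitting $\q(\xi_v^\e)-\q(\xi_v^0)=(\xi_v^\e-\xi_v^0)(\k*\xi_v^\e)+\xi_v^0\,\big(\k*(\xi_v^\e-\xi_v^0)\big)$ together with the embedding estimate \eqref{2.16}, giving
\begin{align*}
\|\q(\xi_v^\e)-\q(\xi_v^0)\|_{\LL_{\sigma}^p}\leq C_p\big(\|\xi_v^\e\|_{\L^p}+\|\xi_v^0\|_{\L^p}\big)\|\xi_v^\e-\xi_v^0\|_{\L^p}.
\end{align*}
Combined with uniform (in $\e$, $\xi_0$, $v$) energy bounds of the Lemma \ref{lem5.4} type and with Lemma \ref{lem25}(2), this controls $\|J[\q(\xi_v^\e)-\q(\xi_v^0)]\|_{\C([0,T]\times\1)}$ by a constant times $\|\xi_v^\e-\xi_v^0\|_{\C([0,T];\L^p(\1))}$, which tends to $0$ in probability, uniformly, by Corollary \ref{cor6.11}. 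The stochastic term carries the prefactor $\sqrt{\e}$: using the linear growth in Hypothesis \ref{hyp1}, the energy bound, and the space-time regularity of stochastic convolutions from \cite[Corollary 3.6]{IGCR}, one bounds $\E\|\Sigma^\e\|_{\C([0,T]\times\1)}^q$ uniformly, so that $\sqrt{\e}\,\Sigma^\e\to0$ in probability by Chebyshev's inequality. Finally, for $\Phi^\e$ the Lipschitz bound in Hypothesis \ref{hyp1}, the uniform estimate $\|G(t-s,\x,\cdot)\|_{\L^1}\leq C$ from Lemma \ref{thrm2.1}, and Cauchy--Schwarz in time against $\|v\|_{\L^2(0,T;\R^n)}\leq M$ yield a bound of the form $\|\Phi^\e\|_{\C([0,t]\times\1)}\leq CM\int_0^t\|\xi_v^\e-\xi_v^0\|_{\C([0,s]\times\1)}\,|v(s)|\,\d s$, after which a Gronwall argument absorbs $\Phi^\e$ into the left-hand side and leaves the difference controlled by the vanishing nonlinear and stochastic contributions.

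The main obstacle is precisely the quadratic, non-Lipschitz nonlinearity $\q$ together with the demand for space-time uniform (supremum-norm) control that must hold simultaneously uniformly over the non-compact family of controls $\PP_2^M$ and over the bounded set of initial data. The deterministic smoothing in Lemma \ref{lem25}(2) only produces $\C([0,T]\times\1)$-regularity for $p>4$, so the whole argument must be carried out at higher integrability than the $p>2$ used in the $\L^p$-topology, and the requisite uniform energy estimates (the analogues of Proposition \ref{prop5.3} and Lemma \ref{lem5.4} for \eqref{6.5}--\eqref{6.6}, uniform in $v\in\PP_2^M$ through $\|v\|_{\L^2(0,T;\R^n)}\leq M$) must be secured first. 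A secondary technical point is the uniform supremum-norm control of the stochastic convolution $\Sigma^\e$, which is where \cite[Corollary 3.6]{IGCR} and the tightness Lemma \ref{lem6.12} do the heavy lifting.
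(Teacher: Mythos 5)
Your overall scaffolding coincides with the paper's: fix $\EE=\C([0,T]\times\1)$, verify Condition \ref{cond} (uniform convergence in probability of $\xi_v^\e$ to $\xi_v^0$ in the supremum norm), and invoke Theorem \ref{thrm6.4} together with Proposition \ref{thrm6.3}; indeed the paper's proof of Theorem \ref{thrm6.8} is literally ``combine Theorems \ref{thrm6.4} and \ref{thrm6.13}.'' Where you genuinely diverge is in how that convergence is proved. The paper's Theorem \ref{thrm6.13} is a soft compactness argument: tightness of $\{\xi_v^\e-\xi_v^0\}$ in $\C([0,T]\times\1)$ (Lemma \ref{lem6.12}, proved term by term via H\"older-continuity estimates from \cite{IGCR}), then Prokhorov's theorem, then identification of any distributional limit as $0$ by pushing the convergence through the embedding $\C([0,T]\times\1)\hookrightarrow\C([0,T];\L^p(\1))$ and appealing to Corollary \ref{cor6.11}, and finally the fact that convergence in distribution to a constant upgrades to convergence in probability. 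You instead run a direct quantitative bootstrap: the bilinear splitting of $\q(\xi_v^\e)-\q(\xi_v^0)$, the estimate \eqref{2.16}, and the smoothing of $J$ in Lemma \ref{lem25}(2) (with $p>4$) dominate the nonlinear part by $\|\xi_v^\e-\xi_v^0\|_{\C([0,T];\L^p(\1))}$, which vanishes uniformly in probability by Corollary \ref{cor6.11}; the control term $\Phi^\e$ is absorbed by Gronwall using $\|v\|_{\L^2(0,T;\R^n)}\le M$; and the noise term carries the prefactor $\sqrt{\e}$. This buys a subsequence-free, more explicit proof --- on the high-probability event of Corollary \ref{cor6.10} you obtain an actual pathwise inequality of the form $\|\xi_v^\e-\xi_v^0\|_{\C([0,T]\times\1)}\le Ce^{CM\sqrt{T}}\big(\|J[\q(\xi_v^\e)-\q(\xi_v^0)]\|_{\C([0,T]\times\1)}+\sqrt{\e}\,\|\Sigma^\e\|_{\C([0,T]\times\1)}\big)$ --- at the price of needing stronger information on the stochastic convolution than the paper ever establishes.

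That is the one soft spot in your write-up: you assert a uniform bound on $\E\big[\|\Sigma^\e\|^q_{\C([0,T]\times\1)}\big]$ ``from \cite[Corollary 3.6]{IGCR}'', but that result, as it is used throughout the paper, is a tightness statement, not a sup-norm moment estimate, and the paper deliberately avoids proving such a moment bound (this is exactly why it resorts to tightness plus Prokhorov in both Corollary \ref{cor6.11} and Theorem \ref{thrm6.13}). The repair is easy and does not change your architecture: tightness of $\{\Sigma^\e\}_{\e\in(0,1]}$ in $\C([0,T]\times\1)$ (which \cite[Corollary 3.6]{IGCR} combined with the uniform bound \eqref{6.21} does give) already implies that along any sequence $\e_m\downarrow0$, $v_m\in\PP_2^M$, $\|\xi_{0,m}\|_{\C(\1)}\le N$ one has $\sqrt{\e_m}\,\Sigma^{\e_m}\Rightarrow0$, hence $\sqrt{\e}\,\|\Sigma^\e\|_{\C([0,T]\times\1)}\to0$ in probability uniformly --- precisely the argument the paper applies to $I_{3,\e}$ in the proof of Corollary \ref{cor6.11}. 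With that substitution your Gronwall scheme closes, yields the conclusion of Theorem \ref{thrm6.13}, and the theorem follows from Theorem \ref{thrm6.4} as in the paper.
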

\begin{proposition}[Global estimate]\label{prop6.9}
For any $p>2$, there exists a constant $C>0$ depending on $p,M$, and $T$, such that for $\e\in(0,1], \ \xi_0\in \L^p({\1})$ and $v\in\PP_2^M$, 
\begin{align}\label{6.10}\nonumber
	\E\bigg[\sup_{t\in[0,T]}\|\xi_v^\e(t)\|_{\L^p}^p\bigg] +&2p(p-1)\E\bigg[\int_0^T\||\xi_v^\e(s)|^{\frac{p-2}{2}}\nabla\xi_v^\e(s)\|_{\L^2}^2\d s\bigg]\\&\leq C \big(1+\|\xi_0\|_{\L^p}^p\big).
\end{align}
\end{proposition}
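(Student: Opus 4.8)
The plan is to reproduce the uniform energy estimate of Lemma \ref{lem5.4}, applying It\^o's formula to $|\cdot|^p$ for the process $\xi_v^\e(\cdot,\x)$ and integrating over $\1$; the only genuinely new ingredient is the deterministic control drift appearing in the skeleton part of \eqref{6.5}. To secure the integrability required for It\^o's formula and for the martingale term to behave as expected, I would first run the computation for the solution stopped at $\tau_N:=\inf\{t\geq0:\|\xi_v^\e(t)\|_{\L^p}\geq N\}\wedge T$ (equivalently, on the truncated system), derive the estimate with $N$-independent constants, and then let $N\to\infty$ by monotone convergence and Fatou's lemma. It\^o's formula then gives, for each $t\in[0,T]$,
\[
\|\xi_v^\e(t)\|_{\L^p}^p + p(p-1)\int_0^t\||\xi_v^\e(s)|^{\frac{p-2}{2}}\nabla\xi_v^\e(s)\|_{\L^2}^2\d s = \|\xi_0\|_{\L^p}^p + I_2 + I_3 + I_4 + I_5,
\]
where $I_2$ is the nonlinear transport term, $I_3$ the stochastic integral, $I_4$ the It\^o correction, and $I_5:=p\sum_{j=1}^n\int_0^t\int_{\1}|\xi_v^\e|^{p-2}\xi_v^\e\,\sigma_j(s,\y,\xi_v^\e(s,\y))v_j(s)\d\y\d s$ the control term.

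The transport term vanishes, $I_2=0$, exactly as in \eqref{FD019}, since $\q(\xi_v^\e)=\xi_v^\e(\k*\xi_v^\e)$ and the reconstructed velocity $\k*\xi_v^\e$ is divergence free (cf. \cite{HBBF}). The terms $I_3$ and $I_4$ are treated verbatim as in Lemma \ref{lem5.4}: because $\e\leq 1$, the Burkholder--Davis--Gundy inequality together with Hypothesis \ref{hyp1} and Young's inequality gives $\E\big[\sup_{t\leq T}|I_3|\big]\leq \frac14\E\big[\sup_{t\leq T}\|\xi_v^\e(t)\|_{\L^p}^p\big]+C\int_0^T\E\big[\|\xi_v^\e(s)\|_{\L^p}^p\big]\d s+C_T$, while the linear growth bound in \eqref{FD3} yields $\E\big[\sup_{t\leq T}|I_4|\big]\leq C\int_0^T\E\big[\|\xi_v^\e(s)\|_{\L^p}^p\big]\d s+C_T$, both with constants independent of $\e$ and of $v$.

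The crux, and the main obstacle, is the control term $I_5$. Using $|\sigma_j|\leq K(1+|\xi_v^\e|)$ and $\big|\sum_j a_jv_j\big|\leq |v|_{\R^n}|a|$, one obtains $|I_5(t)|\leq C\int_0^t|v(s)|_{\R^n}\big(1+\|\xi_v^\e(s)\|_{\L^p}^p\big)\d s$. The difficulty is that the Gronwall weight is now the \emph{random} function $|v(s)|_{\R^n}$ rather than a constant, so the direct absorption used in Lemma \ref{lem5.4} is not available. The key is to exploit the a priori bound $\|v\|_{\L^2(0,T;\R^n)}\leq M$, which holds $\P$-a.s. for $v\in\PP_2^M$: splitting off the constant and applying the Cauchy--Schwarz inequality in time gives $\int_0^t|v|\big(1+\|\xi_v^\e\|_{\L^p}^p\big)\d s\leq \sqrt{T}M+M\big(\int_0^t\|\xi_v^\e(s)\|_{\L^p}^{2p}\d s\big)^{1/2}$, and bounding $\int_0^t\|\xi_v^\e\|_{\L^p}^{2p}\d s\leq \sup_{r\leq t}\|\xi_v^\e(r)\|_{\L^p}^{p}\int_0^t\|\xi_v^\e(s)\|_{\L^p}^p\d s$ followed by Young's inequality produces
\[
\E\big[\sup_{t\leq T}|I_5|\big]\leq \tfrac14\E\big[\sup_{t\leq T}\|\xi_v^\e(t)\|_{\L^p}^p\big]+C_M\int_0^T\E\big[\|\xi_v^\e(s)\|_{\L^p}^p\big]\d s+C_{M,T},
\]
now with a purely \emph{deterministic} integral coefficient. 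This Cauchy--Schwarz/Young manoeuvre, which converts the a.s. $\L^2$-bound on $v$ into a deterministic Gronwall weight, is precisely what makes the estimate uniform over $\PP_2^M$.

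Collecting the four estimates and taking the supremum over $[0,t]$, the two $\tfrac14\E[\sup]$ contributions are absorbed into the left-hand side (doubling accounts for the factor $2p(p-1)$ on the diffusion term), leaving
\begin{align*}
\E\Big[\sup_{r\leq t}\|\xi_v^\e(r)\|_{\L^p}^p\Big] &+ 2p(p-1)\E\Big[\int_0^t\||\xi_v^\e(s)|^{\frac{p-2}{2}}\nabla\xi_v^\e(s)\|_{\L^2}^2\d s\Big] \\
&\leq 2\|\xi_0\|_{\L^p}^p + C_{M,p,T}\int_0^t\E\Big[\sup_{r\leq s}\|\xi_v^\e(r)\|_{\L^p}^p\Big]\d s + C_{M,p,T}.
\end{align*}
Since the coefficient $C_{M,p,T}$ is deterministic and independent of $\e\in(0,1]$, of $\xi_0$, and of $v\in\PP_2^M$, Gronwall's inequality applied to $s\mapsto\E\big[\sup_{r\leq s}\|\xi_v^\e(r)\|_{\L^p}^p\big]$ yields the bound $C(1+\|\xi_0\|_{\L^p}^p)$ on the supremum term, and feeding this back bounds the diffusion term as well, completing the estimate \eqref{6.10}.
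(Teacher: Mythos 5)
Your proposal is correct and follows essentially the same route as the paper: the paper's proof of Proposition \ref{prop6.9} simply refers back to the It\^o-formula/BDG/Gronwall argument of Lemma \ref{lem5.4}, which is exactly what you reproduce (with a standard localization to justify integrability). The only genuinely new term, the control drift $I_5$, you handle by the same splitting-and-absorption device the paper uses for its martingale term in \eqref{FD20} --- Cauchy--Schwarz in time against the almost sure bound $\|v\|_{\L^2(0,T;\R^n)}\leq M$, then Young's inequality to produce a deterministic Gronwall weight --- so your write-up supplies precisely the detail the paper leaves implicit.
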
         
\begin{proof}The proof of this proposition can be obtained in a similar manner as in the proof of \eqref{FD20} in Lemma \ref{lem5.4}.
\end{proof} 
\begin{corollary}\label{cor6.10}
The random field $\sup\limits_{t\in[0,T]}\|\xi_v^\e(t)\|_{\L^p}^p,\ p>2$ is bounded in probability, that is, for any given $T>0,\ M>0$, and $N>0$, 
\begin{align}\label{6.11}
	\lim_{R\to\infty} \sup_{\|\xi_0\|_{\L^p}\leq N}\sup_{v\in\PP_2^M}\sup_{\e\in(0,1]}\P \bigg(\sup_{t\in[0,T]}\|\xi_v^\e(t)\|_{\L^p}^p>R\bigg)=0.		
\end{align}
\end{corollary}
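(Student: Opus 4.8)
The plan is to derive Corollary \ref{cor6.10} directly from the uniform moment bound in Proposition \ref{prop6.9} by a single application of Markov's inequality, the only subtlety being to keep careful track of the fact that the constant appearing in \eqref{6.10} is uniform in all the relevant parameters. First I would record the crucial feature of Proposition \ref{prop6.9}: the constant $C$ there depends only on $p$, $M$, and $T$, and in particular is independent of $\e\in(0,1]$, of the control $v\in\PP_2^M$, and of the initial datum $\xi_0$ beyond its $\L^p$-norm. This is precisely what makes the triple supremum in \eqref{6.11} tractable.

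Next, for fixed $\xi_0\in\L^p(\1)$ with $p>2$, $v\in\PP_2^M$, and $\e\in(0,1]$, Markov's inequality applied to the nonnegative random variable $\sup_{t\in[0,T]}\|\xi_v^\e(t)\|_{\L^p}^p$ together with the estimate \eqref{6.10} yields, for every $R>0$,
\begin{align*}
\P\bigg(\sup_{t\in[0,T]}\|\xi_v^\e(t)\|_{\L^p}^p>R\bigg)\leq \frac{1}{R}\,\E\bigg[\sup_{t\in[0,T]}\|\xi_v^\e(t)\|_{\L^p}^p\bigg]\leq \frac{C\big(1+\|\xi_0\|_{\L^p}^p\big)}{R}.
\end{align*}
I would then take the supremum over $\{\xi_0:\|\xi_0\|_{\L^p}\leq N\}$, over $v\in\PP_2^M$, and over $\e\in(0,1]$. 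Since the right-hand side bound is monotone in $\|\xi_0\|_{\L^p}$ and otherwise independent of $v$ and $\e$, bounding $\|\xi_0\|_{\L^p}^p\leq N^p$ gives
\begin{align*}
\sup_{\|\xi_0\|_{\L^p}\leq N}\sup_{v\in\PP_2^M}\sup_{\e\in(0,1]}\P\bigg(\sup_{t\in[0,T]}\|\xi_v^\e(t)\|_{\L^p}^p>R\bigg)\leq \frac{C\big(1+N^p\big)}{R}.
\end{align*}

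Finally, letting $R\to\infty$ in the displayed inequality forces the left-hand side to zero, which is exactly the assertion \eqref{6.11}. There is no genuine obstacle here: the entire content has already been absorbed into Proposition \ref{prop6.9}, whose proof supplies the uniform-in-$(\e,\xi_0,v)$ energy estimate, and the present corollary is merely the standard translation of a uniform moment bound into boundedness in probability. The one point warranting care is to verify that no hidden dependence on $\e$, $v$, or $\xi_0$ sneaks into $C$ through the stochastic controlled integral term; this is guaranteed because the linear-growth Hypothesis \ref{hyp1} and the constraint $v\in\PP_2^M$ bound the drift and diffusion contributions uniformly, exactly as in the estimate \eqref{FD20} used to prove \eqref{FD18}.
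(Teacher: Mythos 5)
Your proposal is correct and matches the paper's proof exactly: the paper also deduces \eqref{6.11} by applying Markov's inequality to $\sup_{t\in[0,T]}\|\xi_v^\e(t)\|_{\L^p}^p$ together with the global estimate \eqref{6.10} of Proposition \ref{prop6.9}, whose constant is uniform in $\e\in(0,1]$, $v\in\PP_2^M$, and $\|\xi_0\|_{\L^p}\leq N$. Your write-up simply makes explicit the uniformity bookkeeping that the paper leaves implicit.
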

\begin{proof}
	Applying Markov's inequality along with the global estimate \eqref{6.10} yields the required result \eqref{6.11}.
\end{proof}

\begin{corollary}[Uniform convergence in probability]\label{cor6.11}
For any $T>0,\ \delta>0, \ N>0, \ M>0$ and $p>2$
\begin{align}\label{6.12}
	\lim_{\e\to0}\sup_{\|\xi_0\|_{\L^p}\leq N} \sup_{v\in\PP_2^M}\P\bigg(\|\xi_v^\e-\xi_v^0\|_{\C([0,T];\L^p({\1}))}>\delta\bigg)=0.
\end{align}
\end{corollary}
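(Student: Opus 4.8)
The plan is to control the difference $w:=\xi_v^\e-\xi_v^0$ directly, through a localized $\L^p$-energy (It\^o) estimate that exploits the divergence-free structure of the nonlinearity, and then to convert the resulting moment bound into the probability statement \eqref{6.12} via a stopping-time split. Subtracting the skeleton equation \eqref{6.6} from the controlled equation \eqref{6.5}, the process $w$ (with $w(0)=0$) solves the mild equation whose drift carries the term $-\nabla\cdot\big(\q(\xi_v^\e)-\q(\xi_v^0)\big)$, the control term $\sum_{j=1}^n\big(\sigma_j(\cdot,\xi_v^\e)-\sigma_j(\cdot,\xi_v^0)\big)v_j$, and the small stochastic term $\sqrt{\e}\sum_{j=1}^n\sigma_j(\cdot,\xi_v^\e)$ against $\d\W^j$. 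Writing $\u_v^\e=\k*\xi_v^\e$ and using $\q(\xi)=\xi(\k*\xi)$ together with the fact that $\k*\,\cdot$ is divergence free (Lemma \ref{lem2.2}), I would decompose $\nabla\cdot\big(\q(\xi_v^\e)-\q(\xi_v^0)\big)=\u_v^\e\cdot\nabla w+(\k*w)\cdot\nabla\xi_v^0$, which separates the pure transport part from the part that feels the difference $w$.

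First I would record the bounds giving uniformity. Proposition \ref{prop6.9} yields $\E[\sup_{t\in[0,T]}\|\xi_v^\e(t)\|_{\L^p}^p]\le C(1+\|\xi_0\|_{\L^p}^p)$ uniformly in $\e\in(0,1]$ and $v\in\PP_2^M$, and the same computation without the stochastic and It\^o-correction terms gives a deterministic bound $\sup_{t\in[0,T]}\|\xi_v^0(t)\|_{\L^p}\le R_0(N,M,T)$ whenever $\|\xi_0\|_{\L^p}\le N$. Fixing $R>R_0$, I introduce the stopping time $\tau_R:=\inf\{t\ge0:\|\xi_v^\e(t)\|_{\L^p}\ge R\}\wedge T$, so that both $\|\xi_v^\e(\cdot)\|_{\L^p}$ and $\|\xi_v^0(\cdot)\|_{\L^p}$ stay $\le R$ on $[0,\tau_R]$, while by Corollary \ref{cor6.10} the quantity $\sup_{\|\xi_0\|_{\L^p}\le N}\sup_{v\in\PP_2^M}\sup_{\e\in(0,1]}\P(\tau_R<T)$ tends to $0$ as $R\to\infty$.

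Next I would apply the $\L^p$-It\^o formula to $\|w(t\wedge\tau_R)\|_{\L^p}^p$, exactly as in \eqref{FD19}. The transport contribution $\int_{\1}|w|^{p-2}w\,(\u_v^\e\cdot\nabla w)\d\x=\tfrac1p\int_{\1}\u_v^\e\cdot\nabla(|w|^p)\d\x=0$ vanishes because $\u_v^\e$ is divergence free, as in the cancellation \eqref{L13}. For the remaining nonlinear term $\int_{\1}|w|^{p-2}w\,\big((\k*w)\cdot\nabla\xi_v^0\big)\d\x$ I integrate by parts (again using $\nabla\cdot(\k*w)=0$) to shift the derivative onto $|w|^{p-2}w$, so that only $\xi_v^0\in\L^p$ is needed and no control of $\nabla\xi_v^0$ is required; H\"older's and Young's inequalities together with \eqref{2.16} then bound it by $\tfrac12\||w|^{\frac{p-2}{2}}\nabla w\|_{\L^2}^2+C_p\|\xi_v^0\|_{\L^p}^2\|w\|_{\L^p}^p$, the gradient part being absorbed by the dissipation $-p(p-1)\||w|^{\frac{p-2}{2}}\nabla w\|_{\L^2}^2$. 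The control term is estimated by the Lipschitz bound in \eqref{FD3} as $C_{n,L}\,|v(s)|_{\R^n}\|w\|_{\L^p}^p$, while the It\^o-correction term and, after BDG and Young as in \eqref{FD20}, the martingale term both carry a factor of $\e$ on $[0,\tau_R]$ (using $\|\xi_v^\e\|_{\L^p}\le R$), producing a source term of size $C(R,T)\,\e$.

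Collecting these estimates on $[0,\tau_R]$ and taking $\E\sup$, I arrive at
\[
\E\Big[\sup_{r\le t\wedge\tau_R}\|w(r)\|_{\L^p}^p\Big]\le C(R,T)\,\e+C_p\int_0^t\big(R^2+\sqrt{n}\,|v(s)|_{\R^n}\big)\,\E\Big[\sup_{r\le s\wedge\tau_R}\|w(r)\|_{\L^p}^p\Big]\d s.
\]
Since $\int_0^T|v(s)|_{\R^n}\d s\le T^{1/2}M$ for $v\in\PP_2^M$, Gr\"onwall's inequality gives $\E[\sup_{t\le\tau_R}\|w(t)\|_{\L^p}^p]\le C(R,M,T)\,\e$, uniformly over $\|\xi_0\|_{\L^p}\le N$ and $v\in\PP_2^M$. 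Finally, for $\delta>0$ I split
\[
\P\big(\|\xi_v^\e-\xi_v^0\|_{\C([0,T];\L^p(\1))}>\delta\big)\le\P(\tau_R<T)+\frac{1}{\delta^p}\,\E\Big[\sup_{t\le\tau_R}\|w(t)\|_{\L^p}^p\Big],
\]
choosing $R$ large to make the first term uniformly small by Corollary \ref{cor6.10}, and then sending $\e\to0$ for that fixed $R$ to kill the second; taking the suprema over $\xi_0$ and $v$ before the limits yields \eqref{6.12}. The main obstacle is precisely the quadratic, non-Lipschitz nonlinearity $\q$: it is the localization combined with the divergence-free cancellation and the integration-by-parts estimate for the $(\k*w)\cdot\nabla\xi_v^0$ term that makes the Gr\"onwall argument close, and the delicate bookkeeping is to keep every constant independent of $\xi_0$ and of $v\in\PP_2^M$, in particular absorbing the merely $\L^2$-in-time control weight $|v(s)|_{\R^n}$ through the bound $\int_0^T|v(s)|_{\R^n}\d s\le T^{1/2}M$.
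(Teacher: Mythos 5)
Your argument reaches the right conclusion, but by a genuinely different route than the paper. The paper stays entirely at the level of the mild formulation: it writes $\xi_v^\e-\xi_v^0=I_{1,\e}+I_{2,\e}+\sqrt{\e}\,I_{3,\e}$, bounds $I_{1,\e}$ through the heat-kernel smoothing estimate \eqref{R2} (with $\alpha=p$, $\beta=1$) combined with \eqref{2.16}, bounds $I_{2,\e}$ via \cite[Lemma 3.1]{IGCR} and Cauchy--Schwarz in time using the a.s.\ bound on $\|v\|_{\L^2(0,T;\R^n)}$, localizes on the event $\Upsilon_P^\e$ of \eqref{6.15} (made uniformly likely by Corollary \ref{cor6.10}), and runs a \emph{pathwise} Gronwall argument \eqref{6.18} to reduce everything to the single term $\sqrt{\e}\sup_{t}\|I_{3,\e}(t)\|_{\L^p}$; crucially, that last term is then handled not by a moment bound but by tightness of $\{I_{3,\e}\}$ in $\C([0,T];\L^p(\1))$ (via \cite[Corollary 3.6]{IGCR}) plus Prokhorov's theorem, so that multiplication by $\sqrt{\e_m}\downarrow 0$ forces convergence in distribution, hence in probability, to the constant $0$. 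You instead run an $\L^p$-It\^o energy estimate directly on the difference $w$, using the divergence-free cancellation and an integration by parts to avoid $\nabla\xi_v^0$, a stopping-time localization, and BDG. Your route buys an explicit rate in $\e$ (a moment bound for $\sup_{t\le \tau_R}\|w(t)\|_{\L^p}^p$) and avoids the tightness/Prokhorov machinery entirely; the paper's route confines It\^o-type energy estimates to the solutions themselves (Proposition \ref{prop6.9}, Lemma \ref{lem5.4}) rather than to the difference process, keeping the comparison argument at the mild-solution level where the well-posedness theory lives.

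Two corrections are needed. The first is harmless: the martingale term does not ``carry a factor of $\e$'' --- the noise coefficient is $\sqrt{\e}\,\sigma_j$, so BDG produces a prefactor $\sqrt{\e}$ (or $\e^{p/2}$ after Young's inequality), not $\e$; since either vanishes as $\e\to 0$ uniformly over $\|\xi_0\|_{\L^p}\le N$ and $v\in\PP_2^M$, the conclusion is unaffected. The second is a genuine misstep as written: your displayed Gronwall inequality places the random weight $|v(s)|_{\R^n}$ outside the expectation, multiplying $\E\big[\sup_{r\le s\wedge\tau_R}\|w(r)\|_{\L^p}^p\big]$ inside the time integral. This is not legitimate, because elements of $\PP_2^M$ are adapted \emph{random} controls, so $|v(s)|_{\R^n}$ is a random variable correlated with $w$ and cannot be pulled out of the expectation. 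The repair is the one you implicitly gesture at: apply Gronwall pathwise, before taking any expectation, using $\int_0^T|v(s)|_{\R^n}\,\d s\le T^{1/2}M$, $\P$-a.s., to bound the exponential factor by a deterministic constant $C(R,M,T)$ --- this is exactly how the paper deals with the random control in \eqref{6.17}--\eqref{6.18} --- and only then take expectations and apply BDG to the supremum of the stochastic integral. With this reordering your Gronwall step closes and your argument gives a correct, self-contained, and quantitative proof of \eqref{6.12}.
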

\begin{proof}
For any $v\in\PP_2^M$, $\|\xi_0\|_{\L^p}\leq N$, for $p>2$ and $\e\in(0,1]$, we have 
\begin{align}\label{6.13}\nonumber
	\xi_v^\e(t,\x)&-\xi_v^0(t,\x)\\&\nonumber= \int_0^t\int_{\1}\nabla_{\y} G(t-s,\x,\y) \cdot\big( \q(\xi_v^\e(s,\cdot))- \q(\xi_v^0(s,\cdot))(\y)\big)\d\y\d s\\&\nonumber\quad +\sum_{j=1}^{n}\int_0^t\int_{\1} G(t-s,\x,\y)\big(\sigma_j(s,\y,\xi_v^\e(s,\y))-\sigma_j(s,\y,\xi_v^0(s,\y))\big)v_j(s)\d \y\d s\\&\nonumber\quad +\sqrt{\e}\sum_{j=1}^{n}\int_0^t\int_{\1} G(t-s,\x,\y)\sigma_j(s,\y,\xi_v^\e(s,\y))\d \y\d\W^j(s)\\& =: I_{1,\e}+I_{2,\e}+\sqrt{\e}I_{3,\e}.
\end{align}We consider $I_{1,\e}$ and estimate it using  \eqref{2.16} and \eqref{R2} with $\alpha=p,\  \beta=1$, to find 
\begin{align}\label{6.14}\nonumber
	&	\|I_{1,\e}(t)\|_{\L^p}\\&\nonumber = \bigg\|\int_0^t\int_{\1}\nabla_{\y} G(t-s,\x,\y) \cdot\big( \q(\xi_v^\e(s,\cdot))- \q(\xi_v^0(s,\cdot))(\y)\big)\d\y\d s \bigg\|_{\L^p}\\&\nonumber\leq C \int_0^t(t-s)^{-\frac{1}{2}}\|\q(\xi_v^\e(s,\cdot))-\q(\xi_v^0(s,\cdot))\|_{\LL_{\sigma}^p}\d s
	\\&\nonumber\leq C \int_0^t(t-s)^{-\frac{1}{2}}\|(\k*\xi_v^\e(s))(\xi_v^\e(s)-\xi_v^0(s))+(\k*\xi_v^{\e}(s)\\&\qquad\nonumber-\k*\xi_v^0(s))\xi_v^0(s)\|_{\LL_{\sigma}^p}\d s
	\\&\nonumber\leq C \int_0^t(t-s)^{-\frac{1}{2}}\left[\|\k*\xi_v^\e(s)\|_{\LL_\sigma^\infty}\|\xi_v^\e(s)-\xi_v^0(s)\|_{\L^p}\right.\\&\qquad\nonumber\left.+\|\k*(\xi_v^\e(s)-\xi_v^0(s))\|_{\LL_\sigma^\infty}\|\xi_v^0(s)\|_{\L^p}\right]\d s
	\\&\leq C_p \int_0^t(t-s)^{-\frac{1}{2}}\big(\|\xi_v^\e(s)\|_{\L^p}+\|\xi_v^0(s)\|_{\L^p}\big)\|\xi_v^\e(s)-\xi_v^0(s)\|_{\L^p}\d s,
\end{align}for $p>2$ and  $t\in[0,T]$. For any $P>0$, $\e\in(0,1],\ v\in\PP_2^M, \ \|\xi_0\|_{\L^p}\leq N$ and $T>0$, we define 
\begin{align}\label{6.15}
	\Upsilon_P^\e:=\bigg\{\omega\in\Omega: \sup_{t\in[0,T]}\|\xi_v^\e(t)\|_{\L^p}\leq P, \ \sup_{t\in[0,T]}\|\xi_v^0(t)\|_{\L^p}\leq P \bigg\}.
\end{align}Using H\"older's inequality in \eqref{6.14} and \eqref{6.15}, we arrive at
\begin{align}\label{6.16}
	\|I_{1,\e}(t)\|_{\L^p}^p\chi_{\Upsilon_P^\e} \leq C_{p,T,P}\chi_{\Upsilon_P^\e}\int_0^t(t-s)^{-\frac{1}{2}}\|\xi_v^\e(s)-\xi_v^0(s)\|_{\L^p}^p\d s,
\end{align}where $\chi_A$ is the characteristic function of the set $A$. Let us consider the second term $I_{2,\e}$ in the right hand side of \eqref{6.13} and estimate it using \cite[Lemma 3.1(i)]{IGCR} to obtain 
\begin{align*}
	\|I_{2,\e}(t)\|_{\L^p}\leq C\sum_{j=1}^{n}\int_0^t \|\xi_v^\e(s)-\xi_v^0(s)\|_{\L^p}|v_j(s)|\d s.
\end{align*}H\"older's inequality  and \eqref{6.15} results to 
\begin{align}\label{6.17}\nonumber
	\|I_{2,\e}(t)\|_{\L^p}^p\chi_{\Upsilon_P^\e} &\leq C\sum_{j=1}^{n}\bigg(\int_0^t|v_j(s)|^2\d s\bigg)^\frac{p}{2}\bigg(\int_0^t\|\xi_v^\e(s)-\xi_v^0(s)\|_{\L^p}^2\d s\bigg)^\frac{p}{2}\\&\leq CM^\frac{p}{2}\chi_{\Upsilon_P^\e} \int_0^t\|\xi_v^\e(s)-\xi_v^0(s)\|_{\L^p}^p\d s.
\end{align}Combining \eqref{6.14}-\eqref{6.17} and applying Gronwall's inequality, we deduce 
\begin{align}\label{6.18}\nonumber
	&\sup_{t\in[0,T]}\|	\xi_v^\e(t,\x)-\xi_v^0(t,\x)\|_{\L^p}^p\chi_{\Upsilon_p^\e}\\&\nonumber\leq C\e^\frac{p}{2}\sup_{t\in[0,T]}\|I_{3,\e}(t)\|_{\L^p}^p\chi_{\Upsilon_p^\e} \exp\bigg\{C\int_0^T\big[(t-s)^{-\frac{1}{2}}+M^\frac{p}{2}\big]\d s\bigg\}\\&\leq C\e^\frac{p}{2}\sup_{t\in[0,T]}\|I_{3,\e}(t)\|_{\L^p}^p\chi_{\Upsilon_p^\e}.
\end{align}
Let $\varrho>0$. By Proposition \ref{prop6.9}, there exists a $P>0$ large enough such that 
\begin{align}\label{6.19}\nonumber
	&	\sup_{\|\xi_0\|_{\L^p}\leq N}\sup_{v\in\PP_2^M}\sup_{\e\in(0,1]}\big[1-\P(\Upsilon_P^\e)\big] \\&\nonumber\leq 	\sup_{\|\xi_0\|_{\L^p}\leq N}\sup_{v\in\PP_2^M}\sup_{\e\in(0,1]}\bigg\{\P\bigg(\sup_{t\in[0,T]}\|\xi_v^\e(t)\|_{\L^p}>P\bigg)+\P\bigg(\sup_{t\in[0,T]}\|\xi_v^0(t)\|_{\L^p}>P\bigg)\bigg\} \\&\leq \varrho.
\end{align}From \eqref{6.18} and \eqref{6.19}, for any $\e\in(0,1], \ \xi_0\in\L^p({\1})$ and $v\in\PP_2^M$, we find 
\begin{align}\label{6.20}\nonumber
	&	\sup_{\|\xi_0\|_{\L^p}\leq N} \sup_{v\in\PP_2^M}\P\bigg(\sup_{t\in[0,T]}\|\xi_v^\e(t)-\xi_v^\e(t)\|_{\L^p}>\delta\bigg) \\&\nonumber\leq 	\sup_{\|\xi_0\|_{\L^p}\leq N} \sup_{v\in\PP_2^M}\big[1-\P(\Upsilon_P^\e)\big]\\&\nonumber\quad+	\sup_{\|\xi_0\|_{\L^p}\leq N} \sup_{v\in\PP_2^M}\P\bigg(\sup_{t\in[0,T]}\|\xi_v^\e(t)-\xi_v^\e(t)\|_{\L^p}\chi_{\Upsilon_P^\e}>\delta\bigg)\\&\leq \varrho+	\sup_{\|\xi_0\|_{\L^p}\leq N} \sup_{v\in\PP_2^M}\P\bigg(\sqrt{\e}\sup_{t\in[0,T]}\|I_{3,\e}(t)\|_{\L^p}\chi_{\Upsilon_P^\e}>\frac{\delta}{C}\bigg).
\end{align}Using the linear growth of $\sigma_j$ (see Hypothesis \ref{hyp1}), we have for any $t\in[0,T]$
\begin{align}\label{6.21}\nonumber&
	\sup_{\e\in(0,1]}\E\bigg[\sup_{t\in[0,T]}\|\sigma_j(t,\cdot,\xi_v^\e(t,\cdot))\|_{\L^p}^p\bigg] \\&\leq C\bigg(1+\sup_{\e\in(0,1]}\E\bigg[\sup_{t\in[0,T]}\|\xi_v^\e(t)\|_{\L^p}^p\bigg]\bigg)<\infty.
\end{align}Using \cite[Corollary 3.6]{IGCR}, we obtain for any $\|\xi_0\|_{\L^p}\leq N$ and $v\in\PP_2^M$ the family
\begin{align*}
	\{I_{3,\e}:\e\in(0,1]\}
\end{align*}is tight in $\C([0,T];\L^p({\1}))$, for $p>2$. By Prokhorov's Theorem (see \cite[Section 5]{PB}), we find that for any sequence $\e_m\to0, \ v_m\in\PP_2^M$ and $\|\xi_{0,m}\|_{\L^p}\leq N$, there exists a subsequence (denoted by $(\e_m,v_m,\xi_{0,m})$) such that $I_{3,\e_m}(t,\x)$ converges in distribution. Now, we multiply by the subsequence $\sqrt{\e_m}\downarrow0$, then it is obvious that 
\begin{align*}
	\sqrt{\e_m}\sup_{t\in[0,T]}\|I_{3,\e_m}(t)\|_{\L^p}\chi_{\Upsilon_P^{\e_m}} \Rightarrow 0 \ \ \text{ as } \ \ m\to\infty,
\end{align*}where ``$\Rightarrow$" stands for convergence in distribution. Thus, the original sequence $$\sqrt{\e}\sup\limits_{t\in[0,T]}\|I_{3,\e}(t)\|_{\L^p}\chi_{\Upsilon_P^\e} \Rightarrow 0\ \text{ as }\ \e\to 0.$$ As we can see that the sequential limit is zero, which is a constant, therefore the sequence converges in probability also, that is,
\begin{align*}
	\lim_{\e\to0} \P\bigg(	\sqrt{\e}\sup_{t\in[0,T]}\|I_{3,\e}(t)\|_{\L^p}\chi_{\Upsilon_P^\e}>\frac{\delta}{C}\bigg)=0.
\end{align*}Due to arbitrary choice of $\e_m\downarrow0, \ v_m\in\PP_2^M$, and $\|\xi_{0,m}\|_{\L^p}\leq N$, we deduce 
\begin{align}\label{6.22}
	\lim_{\e\to0}\sup_{\|\xi_0\|_{\L^p}\leq N} \sup_{v\in\PP_2^M}\P\bigg(\|\xi_v^\e-\xi_v^0\|_{\C([0,T];\L^p({\1}))}>\delta\bigg)\leq \varrho.
\end{align}The choice of $\varrho$ is arbitrary, hence \eqref{6.12} follows.
\end{proof}
\begin{proof}[Proof of Theorem \ref{thrm6.7}]
Combining Theorem \ref{thrm6.4} and Corollary \ref{cor6.11}, we obtain the required result.
\end{proof}
Let us now move to the proof of Theorem \ref{thrm6.8}, that is, ULDP in $\C([0,T]\times{\1})$ topology. For that, we need the following intermediate result.
\begin{lemma}\label{lem6.12}
For any $\|\xi_0\|_{\C({\1})}\leq N$, the family 
\begin{align*}
	\{\xi_v^\e-\xi_v^0:v\in\PP_2^M, \e\in(0,1]\}
\end{align*}is tight in $\C([0,T]\times{\1})$.
\end{lemma}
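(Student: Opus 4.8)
The plan is to work from the decomposition \eqref{6.13}, namely $\xi_v^\e-\xi_v^0=I_{1,\e}+I_{2,\e}+\sqrt{\e}\,I_{3,\e}$, and to establish that each of the three families $\{I_{1,\e}\}$, $\{I_{2,\e}\}$ and $\{\sqrt{\e}\,I_{3,\e}\}$, indexed by $(v,\e)\in\PP_2^M\times(0,1]$, is tight in $\C([0,T]\times{\1})$; since a finite sum of tight families is tight (the addition map is continuous and sends products of compacts to compacts), this gives the assertion. Throughout I would use that $\|\xi_0\|_{\C({\1})}\leq N$ forces $\|\xi_0\|_{\L^p}\leq C_pN$ for every $p>2$, so that the global estimate of Proposition \ref{prop6.9} and its probabilistic consequence Corollary \ref{cor6.10} hold uniformly over the admissible initial data, and analogously for the skeleton solution $\xi_v^0$ through the pathwise bound coming from $\|v\|_{\L^2(0,T;\R^n)}\leq M$.

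For the drift term $I_{1,\e}=J\big(\q(\xi_v^\e)-\q(\xi_v^0)\big)$, the idea is to upgrade the mere continuity statement of Lemma \ref{lem25}(2) to a Hölder bound. Fixing $p>4$ (legitimate since $\xi_0\in\C({\1})$ places $\xi_v^\e,\xi_v^0$ in $\C([0,T];\L^p)$ for all $p>2$) and writing, as in \eqref{6.14}, $\|\q(\xi_v^\e(s))-\q(\xi_v^0(s))\|_{\LL_\sigma^p}\leq C\big(\|\xi_v^\e(s)\|_{\L^p}+\|\xi_v^0(s)\|_{\L^p}\big)\|\xi_v^\e(s)-\xi_v^0(s)\|_{\L^p}$, the source of $I_{1,\e}$ is controlled in $\L^\gamma(0,T;\LL_\sigma^p)$ by the square of the relevant supremum norms, hence bounded in probability uniformly in $(v,\e,\xi_0)$ by Corollary \ref{cor6.10}. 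Using the gradient heat-kernel estimates \eqref{2.8}--\eqref{2.9} one then shows that $J$ maps $\L^\gamma(0,T;\LL_\sigma^p)$ boundedly into a Hölder space $\C^{\theta}([0,T]\times{\1})$ for some $\theta>0$ (separate Hölder moduli in time and in space suffice), so that $\{I_{1,\e}\}$ is bounded in probability in $\C^{\theta}([0,T]\times{\1})$. Since the embedding $\C^{\theta}([0,T]\times{\1})\hookrightarrow\C([0,T]\times{\1})$ is compact, boundedness in probability in the former translates into tightness in the latter.

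The control term $I_{2,\e}$ is handled in the same spirit, now convolving with $G$ itself rather than $\nabla_\y G$. The linear growth and Lipschitz property in Hypothesis \ref{hyp1}, together with $\|v\|_{\L^2(0,T;\R^n)}\leq M$ and Hölder's inequality in time, bound the relevant source uniformly in probability, and the non-gradient heat-kernel estimates \eqref{2.10}--\eqref{2.11} again yield a uniform-in-probability bound for $I_{2,\e}$ in a Hölder space, hence tightness. For the stochastic term, the uniform moment bound \eqref{6.21} on $\sigma_j(\cdot,\cdot,\xi_v^\e)$ places us exactly in the setting of \cite[Corollary 3.6]{IGCR}, which gives tightness of $\{I_{3,\e}\}$ in $\C([0,T]\times{\1})$; as $\sqrt{\e}\in(0,1]$, the family $\{\sqrt{\e}\,I_{3,\e}\}$ is tight as well.

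The main obstacle is the Hölder regularity estimate for the drift operator $J$ applied to the quadratic nonlinearity, uniformly over $\e$, $v$ and $\xi_0$: one must balance the time singularity $(t-s)^{1/\beta-3/2}$ and the spatial singularity of $\nabla_\y G$ appearing in \eqref{2.8} against the available integrability so as to land in a space compactly embedded in $\C([0,T]\times{\1})$, while guaranteeing that every constant depends only on $N$, $M$ and $T$. Once this regularity bookkeeping is in place, the quadratic control from \eqref{6.14} and the probabilistic bound of Corollary \ref{cor6.10} close the argument without further difficulty.
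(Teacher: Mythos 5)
Your overall strategy coincides with the paper's proof: the same decomposition \eqref{6.13} into $I_{1,\e}+I_{2,\e}+\sqrt{\e}\,I_{3,\e}$, the same uniform-in-$(\xi_0,v,\e)$ bounds in probability on the sources via Corollary \ref{cor6.10} and the estimate \eqref{6.14}, and the same citation of \cite[Corollary 3.6]{IGCR} for the stochastic term. Your preliminary remarks (that $\|\xi_0\|_{\C({\1})}\leq N$ implies $\|\xi_0\|_{\L^p}\leq C_pN$, that a finite sum of tight families is tight, and that the prefactor $\sqrt{\e}\leq 1$ is harmless) are all correct and are used implicitly in the paper.

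The gap is in the step you yourself flag as ``the main obstacle'' and then wave off as ``regularity bookkeeping.'' You propose to extract a H\"older bound for $I_{1,\e}=J\big(\q(\xi_v^\e)-\q(\xi_v^0)\big)$ from the gradient-kernel estimates \eqref{2.8}--\eqref{2.9}, and for $I_{2,\e}$ from \eqref{2.10}--\eqref{2.11}. These estimates cannot produce any modulus of continuity: they control only the $\L^\beta$-size of $\nabla_{\y}G(s,\x,\cdot)$ and $G(s,\x,\cdot)$ uniformly in $\x$, and size bounds of this kind yield sup-norm bounds on the convolutions (that is precisely the content of Lemma \ref{lem25}) but say nothing about the oscillation of $J\boldsymbol{\phi}$. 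To get a H\"older exponent one needs bounds on the kernel \emph{increments}, e.g. $\int_0^t\int_{\1}|\nabla_{\y}G(s,\x,\y)-\nabla_{\y}G(s,\x',\y)|^{\beta}\,\d\y\,\d s\leq C|\x-\x'|^{\kappa\beta}$ together with analogous estimates for time increments; these follow from second-derivative and time-derivative Gaussian bounds that are nowhere stated in the paper and are not consequences of \eqref{2.8}--\eqref{2.11}. This missing ingredient is exactly what the paper's citations supply: \cite[Corollary 3.2]{IGCR} gives tightness of $\{I_{1,\e}\}$ in $\C([0,T]\times{\1})$ once the right-hand side of \eqref{6.23} (equivalently, your bound through \eqref{6.14}) is bounded in probability, and \cite[Lemma 3.1(ii),(iv)]{IGCR} gives the time and space H\"older continuity of $I_{2,\e}$ with H\"older norm controlled by $M\sup_{s\in[0,T]}\|\xi_v^\e(s)-\xi_v^0(s)\|_{\L^q}$, which Corollary \ref{cor6.10} bounds in probability. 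So your argument closes either by citing these results, in which case it reduces to the paper's proof, or by actually proving the kernel increment estimates, which is a substantive piece of analysis rather than bookkeeping.
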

\begin{proof}
From \eqref{6.13}, we have 
\begin{align*}
	\xi_v^\e(t,\x)-&\xi_v^0(t,\x) =: I_{1,\e}+I_{2,\e}+\sqrt{\e}I_{3,\e}.
\end{align*}Using similar arguments as in \eqref{6.14}, we obtain 
\begin{align}\label{6.23}
	\|I_{1,\e}(t)\|_{\L^p}&\leq C_p \int_0^t(t-s)^{-\frac{1}{2}}\big(\|\xi_v^\e(s)\|_{\L^p}+\|\xi_v^0(s)\|_{\L^p}\big)^2\d s. 
\end{align} From Corollary \ref{cor6.10}, we conclude that the terms appearing in the right hand side of the estimate \eqref{6.23} are uniformly bounded in probability over the bounded subsets of initial conditions and for all $\e\in(0,1]$.  Therefore, by \cite[Corollary 3.2]{IGCR}, for all $v\in\PP_2^M$ and $\e\in(0,1]$, the family
\begin{align*}
	\{I_{1,\e}:\|\xi_0\|_{\C({\1})}\leq N\}
\end{align*}are tight in $\C([0,T]\times{\1})$. Using \cite[Corollary 3.6]{IGCR}, we have
\begin{align*}
	\{I_{3,\e}:\|\xi_0\|_{\C({\1})}\leq N, \ v\in\PP_2^M,\ \e\in(0,1]\}
\end{align*}is tight in $\C([0,T]\times{\1})$.

We consider the penultimate term $I_{2,\e}$ and estimate it using \cite[Lemma 3.1(iv)]{IGCR} (cf. \cite[Lemma 2]{MSLS}), with $\gamma=2, \ q>\frac{4}{3}, \ p=\frac{q}{q-1}, \ \kappa_p=1-\frac{1}{q}$ and any $\beta\in\big(0,\beta_0-\frac{1}{2}\big)$, where $\beta_0=\min\big\{2-\frac{2}{q},1\big\},$ for $\x,\x+\y\in{\1}$, $t\in[0,T]$,
\begin{align*}
	|I_{2,\e}(t,\x)-I_{2,\e}(t,\x+\y)|&\leq C|\y|^\beta\bigg(\int_0^t\|\xi_v^\e(s)-\xi_v^0(s)\|_{\L^q}^2|v(s)|^2\d s\bigg)^\frac{1}{2} \\&\leq CM|\y|^\beta \sup_{s\in[0,T]}\|\xi_v^\e(s)-\xi_v^0(s)\|_{\L^q}.
\end{align*}Similarly, the H\"older's continuity in time $t$ follows form \cite[Lemma 3.1(ii)]{IGCR}. For any $q>2$ and $\alpha\in(0,\frac{1}{2}-\frac{1}{q})$, we have
\begin{align*}
	\|I_{2,\e}(t)-I_{2,\e}(s)\|_{\L^\infty} &\leq C|t-s|^\alpha \bigg(\int_0^T\|\xi_v^\e(s)-\xi_v^0(s)\|_{\L^q}^2|v(s)|^2\d s \bigg)^\frac{1}{2}\\&\leq CM|t-s|^\alpha \sup_{ts\in[0<T]}\|\xi_v^\e(s)-\xi_v^0(s)\|_{\L^q}.
\end{align*}For large enough $q$, we observe that $I_{3,\e}$ is H\"older's continuous in both space and time variables and the H\"older norm is uniformly bounded in probability over $\e\in(0,1], \ v\in\PP_2^M$ and $\|\xi_0\|_{\C({\1})}\leq N$. Therefore, the family $\{I_{2,\e}: \ \|\xi_0\|_{\C({\1})}\leq N, \ \e\in(0,1], \ v\in\PP_2^M\}$ is also tight in the space $\C([0,T]\times{\1})$. 
\end{proof}
\begin{theorem}[Convergence in probability in supremum norm]\label{thrm6.13}
For any $T>0,\ \delta>0, \ N>0$ and $M>0$
\begin{align}\label{6.24}
	\lim_{\e\to0}\sup_{\|\xi_0\|_{\C({\1})}}\sup_{v\in\PP_2^M}\P\bigg(\|\xi_v^\e-\xi_v^0\|_{\C([0,T]\times{\1})}>\delta\bigg)=0.
\end{align}
\end{theorem}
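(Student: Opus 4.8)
The plan is to upgrade the convergence in probability already obtained in the coarser topology $\C([0,T];\L^p({\1}))$ (Corollary \ref{cor6.11}) to the finer topology $\C([0,T]\times{\1})$ by combining it with the tightness provided by Lemma \ref{lem6.12}, exploiting the canonical continuous embedding $\iota:\C([0,T]\times{\1})\hookrightarrow\C([0,T];\L^p({\1}))$. The soft principle behind the argument is that tightness in a strong space, together with convergence in distribution to a constant in a weaker space into which the strong space embeds continuously and injectively, forces convergence in distribution to the same constant in the strong space; and convergence in distribution to a constant is equivalent to convergence in probability. Note also that $\|\xi_0\|_{\L^p}\leq(2\pi)^{2/p}\|\xi_0\|_{\C({\1})}$, so the uniformity over $\|\xi_0\|_{\C({\1})}\leq N$ in \eqref{6.24} is compatible with the $\L^p$-uniformity of Corollary \ref{cor6.11}.

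First I would argue by contradiction. Suppose \eqref{6.24} fails. Then there exist $\delta>0$, $\eta>0$, a sequence $\e_m\downarrow 0$, controls $v_m\in\PP_2^M$, and initial data $\xi_{0,m}$ with $\|\xi_{0,m}\|_{\C({\1})}\leq N$ such that, writing $Z_m:=\xi_{v_m}^{\e_m}-\xi_{v_m}^0$,
\begin{align*}
\P\big(\|Z_m\|_{\C([0,T]\times{\1})}>\delta\big)\geq \eta \quad\text{for all } m.
\end{align*}
By Lemma \ref{lem6.12}, the family $\{Z_m\}$ is tight in $\C([0,T]\times{\1})$, so by Prokhorov's theorem (see \cite[Section 5]{PB}) there is a subsequence, still denoted $Z_m$, converging in distribution in $\C([0,T]\times{\1})$ to some random element $Z$.

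Next I would identify the limit $Z$. By the continuous mapping theorem applied to the continuous embedding $\iota$, we have $\iota(Z_m)\Rightarrow \iota(Z)$ in $\C([0,T];\L^p({\1}))$. On the other hand, Corollary \ref{cor6.11} yields $Z_m\to 0$ in probability in $\C([0,T];\L^p({\1}))$ along our sequence (since $\|\xi_{0,m}\|_{\L^p}$ stays bounded and $v_m\in\PP_2^M$), hence $\iota(Z_m)\Rightarrow 0$. Uniqueness of the weak limit forces $\iota(Z)=0$ $\P$-a.s., and injectivity of $\iota$ then gives $Z=0$ $\P$-a.s. Therefore $Z_m\Rightarrow 0$ in $\C([0,T]\times{\1})$, and since the limit is the deterministic constant $0$, convergence in distribution upgrades to convergence in probability, i.e. $\P(\|Z_m\|_{\C([0,T]\times{\1})}>\delta)\to 0$. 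This contradicts the lower bound $\eta$. As the offending sequences $\e_m,v_m,\xi_{0,m}$ were arbitrary subject to the stated constraints, \eqref{6.24} follows with the claimed uniformity.

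The main obstacle is really packed into Lemma \ref{lem6.12}: securing the uniform (in $\e\in(0,1]$, $v\in\PP_2^M$, and $\|\xi_0\|_{\C({\1})}\leq N$) spatial and temporal H\"older estimates for the convolution term $I_{1,\e}$, the control term $I_{2,\e}$, and the stochastic convolution $I_{3,\e}$ that underpin tightness in $\C([0,T]\times{\1})$; once tightness is available, the passage from the $\L^p$-topology to the uniform topology is the soft weak-convergence argument above. A secondary point is to make sure the contradiction scheme genuinely encodes the uniformity over initial data and controls, which is precisely why I phrase the negation of \eqref{6.24} as the existence of sequences $\xi_{0,m}$ and $v_m$, so that no separate uniform bookkeeping is required beyond the uniform tightness of Lemma \ref{lem6.12} and the uniform convergence of Corollary \ref{cor6.11}.
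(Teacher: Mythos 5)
Your proposal is correct and follows essentially the same route as the paper's proof: tightness from Lemma \ref{lem6.12}, Prokhorov's theorem to extract a subsequence converging in distribution in $\C([0,T]\times{\1})$, identification of the limit as zero via the embedding into $\C([0,T];\L^p({\1}))$ and Corollary \ref{cor6.11}, and finally the upgrade from convergence in distribution to a constant to convergence in probability. Your contradiction framing and the explicit remarks on injectivity of the embedding and on $\|\xi_0\|_{\L^p}\leq(2\pi)^{2/p}\|\xi_0\|_{\C({\1})}$ are merely cleaner bookkeeping of the same argument, which the paper phrases directly as ``due to the arbitrary choice of the sequences.''
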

\begin{proof}
In Lemma \ref{lem6.12}, we established that for any $\|\xi_0\|_{\C({\1})}\leq N$, the family 
\begin{align*}
	\{\xi_v^\e-\xi_v^0:v\in\PP_2^M, \e\in(0,1]\}
\end{align*}is tight in $\C([0,T]\times{\1})$. Again, by Prokhorov's Theorem (see \cite[Section 5]{PB}), we obtain that for any sequence $\e_m\to0,\ v_m\in\PP_2^M$ and $\|\xi_{0,m}\|_{\C({\1})}\leq N$, there exists a subsequence (denoted by $(\e_m,v_m,\xi_{0,m})$) such that $\xi_v^\e-\xi_v^0$ converges in distribution. Let $\xi^*$ be the limit. 

Since we have the embedding $\C([0,T]\times{\1})\hookrightarrow\C([0,T];\L^p({\1}))$, the above convergence gives $\xi_v^\e-\xi_v^0\Rightarrow \xi^*$, in the space $\C([0,T];\L^p({\1}))$ topology also. Therefore, Corollary \ref{cor6.11} yields
\begin{align*}
	\sup_{t\in[0,T]}\|\xi^*(t)\|_{\L^p}^p=0,
\end{align*}with probability 1 which implies that $\|\xi^*\|_{\C([0,T]\times{\1})}=0$ with probability 1. Since $\|\xi_v^\e-\xi_v^0\|_{\C([0,T]\times{\1})}$ is converging to $0$, we have 
\begin{align*}
	\lim_{m\to0}\P\big(\|\xi_{v_m}^{\e_m}-\xi_{v}^0\|_{\C([0,T];{\1})}>\delta\big)=0.
\end{align*}Due to arbitrary choice of $\e_m\downarrow0, \ v_m\in\PP_2^M$, and $\|\xi_{0,m}\|_{\L^p}\leq N$, we obtain the required result \eqref{6.24}.
\end{proof}
\begin{proof}[Proof of Theorem \ref{thrm6.8}]
Combining Theorems \ref{thrm6.4} and \ref{thrm6.13}, we obtain the required result.
\end{proof}

\medskip\noindent
\textbf{Acknowledgments:} The first author would like to thank Ministry of Education, Government of India - MHRD for financial assistance.  M. T. Mohan would  like to thank the Department of Science and Technology (DST) Science $\&$ Engineering Research Board (SERB), India for a MATRICS grant (MTR/2021/000066).

\end{document}